\documentclass{article}

\usepackage[T1]{fontenc}
\usepackage[utf8]{inputenc}

\usepackage{multirow} 
\usepackage{amsfonts,mathtools,amssymb}
\usepackage{graphicx,wrapfig,tikz,animate,subcaption}
\usepackage{lmodern}
\usepackage{fancyhdr,graphicx,wrapfig,amsmath,amsthm,amssymb, mathtools, scrextend, titlesec, enumitem,color}
\usepackage{array}
\usepackage{hyperref}
\usepackage{caption}
\usepackage{comment}
\usepackage{cancel}
\usepackage{nicefrac}
\newcommand{\vvvert}{\ensuremath|\!|\!|\xspace}
\usepackage[normalem]{ulem}

\newcommand{\yesdelete}[1]{}

\usepackage[boxed]{algorithm2e} 
\usetikzlibrary{arrows}

\newcommand{\est}{\text{\,e.s.t.}\xspace}

\definecolor{verylightgray}{gray}{0.93}
\usepackage[color=verylightgray]{todonotes}

\usepackage{wrapfig,tikz}
\usetikzlibrary{shapes.misc}

\newtheorem{thm}{Theorem}

\newtheorem{lemma}[thm]{Lemma}
\newtheorem{propo}[thm]{Proposition}

\newtheorem{rem}[thm]{Remark}
\usepackage{xspace}

\newcommand{\N}{\ensuremath{\mathbb{N}}\xspace}
\newcommand{\R}{\ensuremath{\mathbb{R}}\xspace}

\newcommand{\Q}{\ensuremath{\mathcal{Q}}\xspace}
\newcommand{\T}{\ensuremath{\mathcal{T}}\xspace}
\newcommand{\aaa}{\ensuremath{\mathbf{a}}\xspace}
\newcommand{\xxx}{\ensuremath{\mathbf{x}}\xspace}

\DeclareMathOperator{\Div}{div}
\DeclareMathOperator{\supp}{supp}
\newcommand{\Span}{\text{span}}
\newcommand{\tfa}{\text{ for all }}
\newcommand{\qtfa}{\quad\text{for all }}

\newcommand{\Hoi}[1][\Omega]{\ensuremath{H_0^1(#1)}\xspace}

\newcommand{\bT}[1][{}]{\ensuremath{\psi_T^{#1}}}

\newcommand{\bS}[1][{}]{\psi_S^{#1}}
\newcommand{\patchS}[1][{}]{\omega_S^{#1}}
\newcommand{\nz}{\mathbb{N}}
\newcommand{\rz}{\mathbb{R}}
\newcommand{\ueps}{u^\epsilon}
\newcommand{\uas}{u^{as}}
\newcommand{\intO}{\int_\Omega}
\newcommand{\uBas}{\ensuremath{u_B^{as}}\xspace}
\newcommand{\psilambda}{\ensuremath{\psi_{\lambda}}}
\newcommand{\psilambdaas}{\ensuremath{\psi_{\lambda}^{as}}}

\newcommand{\summLA}[1][{}]{\ensuremath{\sum_{\lambda\in\Lambda_{#1}}\alpha_\lambda}}
\newcommand{\summL}[1][{}]{\ensuremath{\sum_{\lambda\in\Lambda_{#1}}}}
\newcommand{\Gammaleft}{\ensuremath{\Gamma_\lambda^{\text{left}}}\xspace}
\newcommand{\Gammaright}{\ensuremath{\Gamma_\lambda^{\text{right}}}\xspace}
\newcommand{\nsep}{\ensuremath{n\sqrt{\epsilon}}}

\newcommand{\oab}{\ensuremath{1+\alpha+\beta}}
\newcommand{\otab}{\ensuremath{1+2\alpha+\beta}}
\newcommand{\hatT}{\ensuremath{{\hat{T}}}}
\newcommand{\hatb}{\ensuremath{\hat{b}}}
\newcommand{\hatx}{\ensuremath{{\hat{x}}}}
\newcommand{\haty}{\ensuremath{{\hat{y}}}}
\newcommand{\hatD}{\ensuremath{\hat{\Delta}}}
\newcommand{\hatN}{\ensuremath{\hat{\nabla}}}
\newcommand{\hatnsep}{\ensuremath{n\sqrt{\hat{\epsilon}}}}
\newcommand{\hatnep}{\ensuremath{n\hat{\epsilon}}}

\definecolor{b1}{RGB}{34,139,34}   
\definecolor{b2}{RGB}{255,140,0}   
\definecolor{bT}{RGB}{30,60,150}   

\title{Error estimates for the patch bubble method for convection-dominated channel flow problem}

\author{Eberhard B\"ansch\footnote{Applied Mathematics III,
Friedrich-Alexander-University Erlangen-Nuremberg,
Cauerstr. 11, 91058 Erlangen, Germany
\href{mailto:baensch@math.fau.de}{baensch@math.fau.de}}
\and 
Pedro Morin\footnote{Universidad Nacional del Litoral and CONICET, 
Departamento de Matem\'atica, 
Facultad de Ingenier\'{\i}a Qu\'{\i}mica, Santiago del Estero 2829, S3000AOM Santa Fe, Argentina. 
\href{mailto:pmorin@fiq.unl.edu.ar}{pmorin@fiq.unl.edu.ar}}
\and Itatí Zocola\footnote{Universidad Nacional del Litoral and CONICET, 
Departamento de Matem\'atica, 
Facultad de Ingenier\'{\i}a Qu\'{\i}mica, Santiago del Estero 2829, S3000AOM Santa Fe, Argentina. 
\href{mailto:izocola@fiq.unl.edu.ar}{izocola@fiq.unl.edu.ar}}
}
\date{}

\begin{document}

	\maketitle


\begin{abstract}
We present error estimates for the BMZ (Bubble Mesh Zoom) residual-free bubble method applied to a convection–diffusion equation in the convection-dominated regime.
The method incorporates both element bubbles and residual-free bubbles supported on patches of two adjacent elements.

We focus on the case of a parallel flow in a square domain and derive error estimates in an energy norm that remain valid as diffusion becomes small.
The theoretical findings are corroborated by numerical experiments, which also exhibit a competitive performance of the method.
\end{abstract}

\noindent\textbf{Keywords:} 
Residual free bubbles; convection-diffusion; finite elements

\noindent\textbf{AMS Subject Classification:}
65M60 (Primary) 65N30, 65N12 (Secondary)

\tableofcontents


\section{Introduction}

The goal of this article is to present a theoretical analysis of the residual-free bubble method introduced in~\cite{BMZ} for advection-dominated problems, where the diffusion coefficient $0<\epsilon \ll 1$. In that work, we proposed augmenting the discrete space not only with element-based residual-free bubbles (RFB), but also with bubbles defined on two-element patches. This enrichment increases the flexibility of the approximation space and enables a more accurate resolution of boundary layers. The numerical experiments reported in~\cite{BMZ} demonstrate competitive performance and excellent stabilization properties on a variety of stationary and instationary test problems. However, no error analysis was provided in that article. In the present article we prove error estimates for the case of channel flow.

For a detailed discussion of both element-based and patch-based residual-free bubble methods, we refer the reader to~\cite{BMZ} and the references therein.

Theoretical results and error estimates for the classical RFB method can be found, for example, in 
\cite{BFHR1997, Russo1997, Franca1998, Brezzi1999}. The idea of employing patch bubbles was first introduced and analyzed in \cite{Cangiani2005long,Cangiani2005short}, where the authors referred to the approach as the enhanced residual-free bubble method (RFBe). In \cite{Cangiani2005long}, error estimates are derived for a flow directed along a diagonal relative to the square domain. These estimates, however, degenerate when the advective field aligns with one of the coordinate axes.

The available error estimates reveal several structural features of these formulations. For the classical RFB method \cite[Thm. 4.2]{Brezzi1999}, the bounds typically involve derivatives of the exact solution, which may become unbounded as $\epsilon$ becomes small; thus, the estimate deteriorates precisely in the convection-dominated regime. For the RFBe method, the derived bound \cite[Thm. 2]{Cangiani2005long} has the form
 $\max\{(\epsilon/h)^{1/2},\epsilon^{1/4}\} + h$. The error estimates we obtain in this article are of the form $(\epsilon/h)^{1/4}$, but for a norm that is a bit stronger than the one considered in \cite[Thm. 2]{Cangiani2005long}; see Theorem \ref{Thm:main}.
It is worth observing that none of these estimates guarantees convergence purely as the mesh parameter $h$ tends to $0$, but they are all valid in the convection-dominated regime $\epsilon \ll h$. In the diffusion regime, when $h < \epsilon$, the methods do not require stabilization and the usual estimates for the pure Galerkin approximation are valid, providing optimal convergence rates.

The rest of this paper is organized as follows. In Section
\ref{Sec:problem} we introduce the problem, its discretization, our proposal of stabilization using bubbles,
as well as our main result, Theorem \ref{Thm:main}.
Section~\ref{Sec:ErrorEstimate} is devoted to the proof of the main result.
To this end, we rely on the asymptotic expansion introduced and analyzed in~\cite{GieEtAl:13}, and we devote Subsection~\ref{Sec:interpolant} to the construction of a suitable interpolant for this expansion.
In Section~\ref{Sec:numerics} we present numerical experiments that demonstrate how our method compares computationally with existing approaches.
Conclusions are drawn in Section~\ref{Sec:conclusions}.

Finally, two appendices provide technical auxiliary results, which are necessary to prove the main theorem. 
They were left to the end in order to ease the reading of Section~\ref{Sec:ErrorEstimate}.


\section{The problem and its discretization}\label{Sec:problem}
\subsection{The problem}
We consider the advection-dominated advection diffusion problem defined, for $0< \epsilon \le \epsilon_0 \ll 1$
 and $\aaa\in \R^2, |\aaa|=1$ as follows:
\begin{equation}\label{Eq:problem}
    - \epsilon \Delta u^\epsilon + \aaa \cdot \nabla  u^\epsilon =f \quad \text{in $\Omega$,}\qquad
        u^\epsilon = 0 \quad \text{on $\partial\Omega$}
\end{equation}
with $\Omega\subset \R^2$ open and bounded, and $f \in L^2(\Omega)$.
In order to define a weak solution we let the bilinear form $a(\cdot,\cdot): \Hoi \times \Hoi \rightarrow \rz$ be defined as

\begin{equation*}
a(u,v) := \epsilon\int_\Omega \nabla u \cdot \nabla(e^{-\aaa\cdot \xxx} v) \, d\xxx + \int_\Omega \aaa \cdot \nabla u\, v \,e^{-\aaa\cdot \xxx} \,d\xxx,
\end{equation*}
and the linear form $F$ on $L^2(\Omega)$:
\begin{equation*}
F(v) := \int_\Omega f\, v\, e^{-\aaa\cdot \xxx} \, d\xxx.
\end{equation*}

We consider the following weak formulation of~\eqref{Eq:problem}:
\begin{equation}\label{eq:weak}
\text{Find } \ueps \in \Hoi : \qquad a(\ueps,v) = F(v), \quad \forall v\in\Hoi.
\end{equation}

Due to the exponential term inside the bilinear form, we have the following coercivity result, which immediately implies the well-posedness of~\eqref{eq:weak}.

\begin{lemma}[Coercivity]
\label{Lemma:Coercivity}
There exists a constant $\alpha > 0$, independent of $\epsilon$, and solely depending on $\Omega$ such that
$$
 a(v,v) \ge \alpha \| v\|_\epsilon^2\qtfa v \in \Hoi,
$$
where
\begin{equation}
\label{eq:epsnorm}
  \| v \|_\epsilon^2 := \epsilon \| \nabla v\|^2 + \|v\|^2,
\end{equation}
with $\| \cdot \|$ the usual $L^2(\Omega)$ norm.
\end{lemma}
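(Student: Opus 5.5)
Write $w := e^{-\aaa\cdot\xxx}$, which is smooth and positive on $\overline\Omega$ with $\nabla w = -\aaa\, w$. Since $\Omega$ is bounded, there are constants $0<w_{\min}\le w\le w_{\max}$ depending only on $\Omega$ (for instance $w_{\min}=e^{-\operatorname{diam}}$ after translating the origin into $\Omega$). The plan is to expand $a(v,v)$, integrate the convective term by parts to expose a positive zeroth-order term, and bound the resulting weighted norms below by $w_{\min}$ times the unweighted ones. By density it suffices to take $v\in C_c^\infty(\Omega)$.

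\textbf{Expanding the form.} First,
$$
\epsilon\int_\Omega \nabla v\cdot\nabla(wv) = \epsilon\int_\Omega |\nabla v|^2 w - \epsilon\int_\Omega (\aaa\cdot\nabla v)\, v\, w .
$$
For the convective part, I will use $(\aaa\cdot\nabla v)v = \tfrac12 \aaa\cdot\nabla(v^2)$ and integrate by parts with $v=0$ on $\partial\Omega$:
$$
\int_\Omega (\aaa\cdot\nabla v)\, v\, w = -\tfrac12\int_\Omega v^2\, \aaa\cdot\nabla w = \tfrac12\int_\Omega v^2 w ,
$$
since $|\aaa|=1$. Hence
$$
a(v,v) = \epsilon\int_\Omega |\nabla v|^2 w + \frac{1-\epsilon}{2}\int_\Omega v^2 w .
$$

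\textbf{Concluding.} Since $\epsilon\le\epsilon_0\ll 1$, say $\epsilon_0\le\tfrac12$, the coefficient $\frac{1-\epsilon}{2}$ is at least $\tfrac14$. Then
$$
a(v,v)\ge w_{\min}\min\{1,\tfrac14\}\,\|v\|_\epsilon^2 ,
$$
so one can take $\alpha = w_{\min}/4$, which depends only on $\Omega$.

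The only delicate point is the cross term $-\epsilon\int (\aaa\cdot\nabla v)\,v\,w$ produced by differentiating the weight. It is harmless precisely because it equals $-\tfrac{\epsilon}{2}\int v^2 w$, which is absorbed by the $\tfrac12\int v^2 w$ coming from convection whenever $\epsilon<1$. If one prefers not to use the assumption $\epsilon\le\epsilon_0$, the same cancellation gives the bound for all $\epsilon<1$, with $\alpha$ depending on $1-\epsilon$.
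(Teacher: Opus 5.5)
Your proof is correct and follows essentially the same route as the paper: expand the form, integrate the convective term by parts to get $a(v,v)=\epsilon\int_\Omega e^{-\aaa\cdot\xxx}|\nabla v|^2+\frac{1-\epsilon}{2}\int_\Omega e^{-\aaa\cdot\xxx}|v|^2$, and bound the weight below on the bounded domain. The only difference is presentational: you state the lower bound on the weight and the use of $\epsilon\le\epsilon_0<1$ explicitly, whereas the paper absorbs both into $\gtrsim$.
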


Whenever we say that a constant is independent of $\epsilon$, we understand that it may depend on $\epsilon_0$ and that we are considering $0<\epsilon\le\epsilon_0$.
We will usually denote $x \lesssim y$ to denote $x\le C y$, with a constant $C>0$ which may depend on $\epsilon_0$ and $\Omega$, but is otherwise independent of the other involved quantities. 

\begin{proof}
Notice that for any $v\in \Hoi$,
$$
  a(v,v) = \epsilon\int_\Omega e^{-\aaa\cdot \xxx} |\nabla v|^2\, d\xxx + \epsilon \intO (-\aaa \cdot \nabla v) \, v e^{-\aaa\cdot \xxx}\, d\xxx
     + \intO \aaa \cdot \nabla v \, v e^{-\aaa\cdot \xxx}\,d\xxx.
$$
Integration by parts yields
\begin{equation*}
\begin{aligned}
  \intO (\aaa \cdot \nabla v) \, v e^{-\aaa\cdot \xxx}\, d\xxx 
   &= -\intO v \Div(\aaa \, v\, e^{-\aaa\cdot \xxx}) \, d\xxx 
  \\
&= -\intO \aaa \cdot \nabla v \, v e^{-\aaa\cdot\xxx} \, d\xxx
+ \intO e^{-\aaa \cdot \xxx} |\aaa|^2 |v|^2 \, d\xxx,
\end{aligned}
\end{equation*}
whence, using that $|\aaa|=1$,
$  \intO (\aaa \cdot \nabla v) \, v e^{-\aaa\cdot \xxx}\, d\xxx 
   = \frac12
\intO e^{-\aaa \cdot \xxx} |v|^2 \, d\xxx$.
Thus,
\begin{equation}
  a(v,v) = \epsilon\int_\Omega e^{-\aaa\cdot \xxx} |\nabla v|^2 \,d\xxx
  + \frac{1-\epsilon}{2} \intO e^{-\aaa\cdot \xxx} |v|^2\,d \xxx
  \gtrsim \| v\|_\epsilon^2.
  \tag*{\qedhere}
\end{equation}
\end{proof}

\subsection{Discretization: Patch bubble space.}

We assume that $\Omega$ is subdivided into squares with side length $h>0$, and denote by $\T$ the partition of $\Omega$ consisting of all the squares $T$.
Let the finite element space $V_L$ be defined as
\[
V_L := \{ v \in H^1(\Omega) : v_{|T} \in \Q^1, \forall T\in \T, 
\ v(x) = 0, \text{ for all boundary nodes $x$}  \},
\]
with $\Q^1$ the space of bilinear functions of the form 
$a_{00} + a_{10} x + a_{01} y + a_{11} x y$.

Our proposal consists in enriching the discrete space $V_L$ by adding:
\begin{itemize}
\item for each element $T$, the space $B_T := \Span\{\bT[i], i=1,\dots, 4\}$, where $\bT[i]$, $i=1,\dots,4$, are the \emph{element bubbles} defined as the solutions to:
\[
-\epsilon \Delta \bT[i] + \aaa \cdot \nabla \bT[i] 
= \varphi_T^i 
\quad\text{in $T$}, 
\qquad 
\bT[i] = 0 
\quad\text{on $\partial T$},
\] 
with $\varphi_T^i$, $i=1,\dots, 4$ the nodal basis functions corresponding to the bilinear elements;
\item and also for each edge $S$ of the skeleton $\Sigma_h$ of the partition, the space $B_S := \Span\{\bS\}$, with the residual-free bubble $\bS$ defined as follows:
\[
-\epsilon \Delta \bS + \aaa \cdot \nabla \bS = l 
\quad\text{in $\patchS$}, 
\qquad 
\bS = 0 \quad\text{on $\partial\patchS$},
\]
where the patch $\patchS$ is the union of the two elements of the partition sharing the side $S$ (see Figure \ref{F:PatchDomains})
and $l$ is a particular right hand side, see Subsection \ref{Sec:mainresult}. Here, $\Sigma_h$ denotes the set of interior edges of the partition. 
\end{itemize}

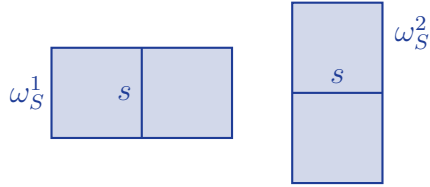
\begin{figure}[h]
    \centering
    \begin{tikzpicture}[scale=0.4]
    
        \fill[bT, opacity=0.2] (0,1.5) rectangle (6,4.5);
        \draw[bT, thick] (0,1.5) rectangle (6,4.5);

         \fill[bT, opacity=0.2] (8,0) rectangle (11,6);
         \draw[bT, thick] (8,0) rectangle (11,6);
    
        \draw[bT, thick] (3,1.5) -- (3,4.5) node[midway, left] {\large $s$};
        \draw[bT, thick] (8,3) -- (11,3) node[midway, above] {\large $s$};

    \node at (-0.8,3) {\Large\textcolor{bT}{ \large$\patchS[1]$}};
    \node at (12,5) {\Large\textcolor{bT}{ \large$\patchS[2]$}};
    
    \end{tikzpicture}
    \caption{\small \label{F:PatchDomains} Example of patch domains $\patchS[1]$ and $\patchS[2]$, corresponding to vertical and horizontal edges of a square partition.}
\end{figure}

Finally, we set
\[
    V_h := V_L \oplus V_B,
    \quad\text{with}\quad
    V_B := \bigoplus_{T\in\T_h} B_T\oplus\bigoplus_{S\in\Sigma_h}B_S.
\]
The discrete problem associated to \eqref{Eq:problem} now reads:
\begin{equation}\label{eq:discrete problem}
\text{Find } u_h:=u_L + u_B  \in V_h:\qquad    a(u_h,v_h) = F(v_h) \qtfa v_h \in V_h.
\end{equation}
We refer to this method as \emph{bubble mesh zoom} (BMZ) in the sequel.
\begin{figure}[h!tbp]
    \centering
    \includegraphics[width=.49\textwidth]{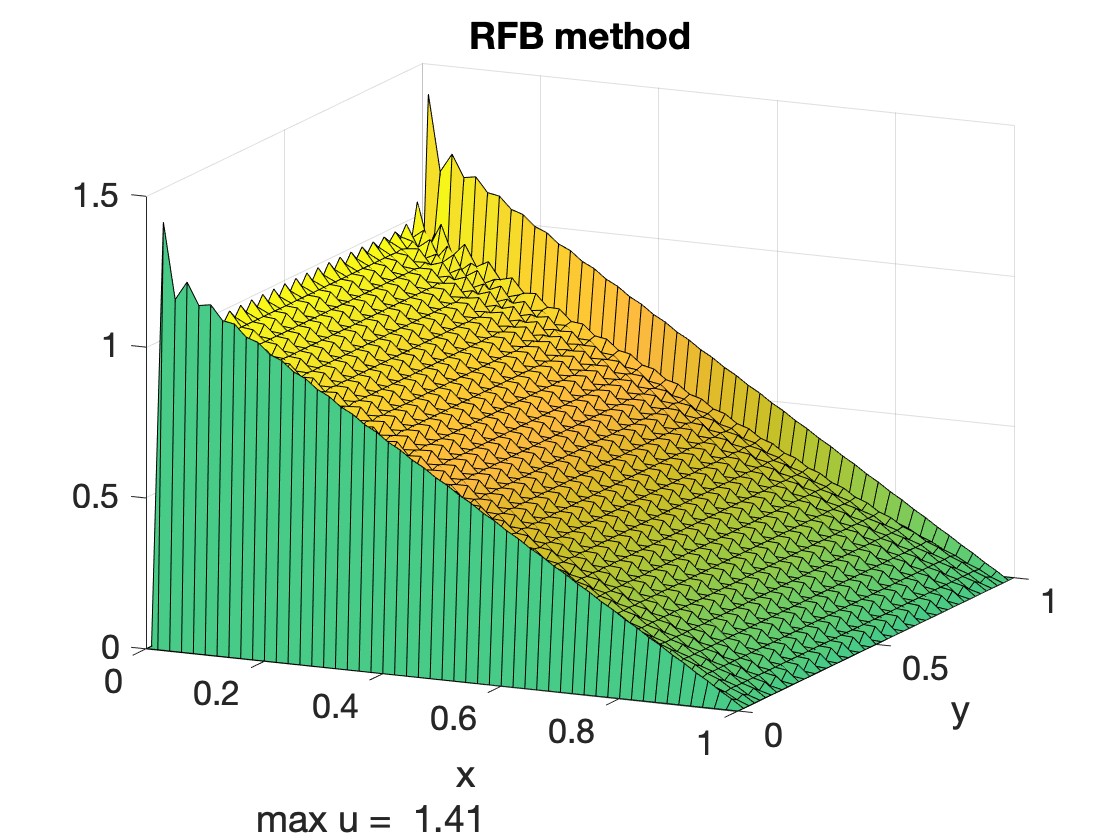}
    \includegraphics[width=.49\textwidth]{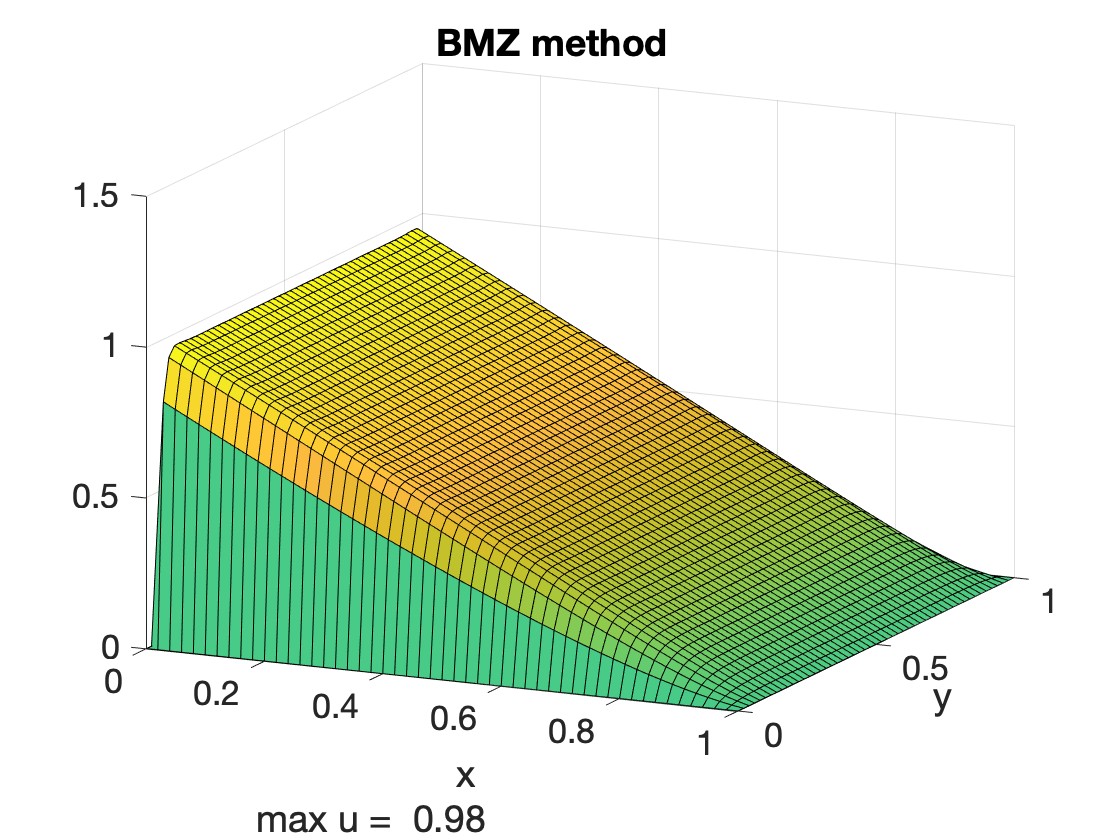}
    \caption{\small \label{F:DiscreteSolutions}
    Solutions obtained with the residual-free bubbles method (left) and BMZ method (right), corresponding to $f=1$, $\aaa = (-1,0)$ and $\epsilon=10^{-6}$. The partition consists of $50\times 50$ square elements.
    The proposed BMZ method eliminates boundary layer oscillations and corner spikes observed in the RFB solution, resulting in a
smoother global behavior, without any observable smearing effect. Additionally, it improves the maximum value, reducing it
from 1.4099 (RFB) to  0.9841 (BMZ).}
\end{figure}

\begin{rem}
\rm
The classical RFB method considers $V_B =\bigoplus_{T\in\T_h} B_T$, i.e., only the \emph{element bubbles} without \emph{edge bubbles}.
An advantage of this method is the simplicity with which the final discrete solution can be computed. After testing separately with functions in $V_L$ and $V_B$ a static condensation eliminates the bubble unknowns, yielding a linear system solely for $v_L$. Importantly, this requires no explicit evaluation of the bubble functions themselves: only elementwise integrals involving the residual appear, and these are straightforward to approximate numerically.
Because of this elimination procedure, the resulting method 
coincides exactly with the well-known SUPG formulation \cite{BFHR1997}, with the RFB framework providing a precise expression for the stabilization parameters.
\end{rem}
\begin{rem}[Another method using patch-bubbles]
\rm
The first papers that proposed the use of patch bubbles were those by Cangiani and S\"uli~\cite{Cangiani2005long,Cangiani2005short} with the so called RFBe method. They considered bubbles which satisfy the homogeneous equation in each element of the patch, and are equal to the analytical 1D solution (corresponding to the projected wind) on the interelement edge. Besides some impressive numerical experiments, they obtain some error estimates for the case of a somewhat diagonal wind, by resorting to the asymptotic approximation presented by Schieweck~\cite{thesisSchieweck}. Those error estimates blow up when the angle between the wind direction and part of the boundary vanishes, in principle, by a limitation in Schieweck's formulas.
Besides, we predict that their method will not be as good in such a case because those bubbles will not be able to capture precisely the (parabolic) boundary layer.
Instead, we propose to use bubble functions that solve the same advection-diffusion problem on the patches, thereby capturing the essential behavior of the solution near the boundaries, regardless of the wind direction.
\end{rem}

\begin{rem}[Computing the bubbles]\rm
The reader might have noticed that the residual-free bubbles are solutions to a problem as difficult as the original one; albeit with an important difference: the domain is now of size $h$.
Cangiani and S\"uli use Shishkin meshes in order to compute these bubbles.
Our approach, instead, leverages \emph{recursion}.
The algorithm, which from now on we refer to as the BMZ (Bubble-Mesh-Zoom) method, is based on the idea of zooming in locally to solve bubble sub-problems over each mesh element or patch.
The entries of the system matrix are computed within a function called $\texttt{element\_contributions}$, in which we generate finite element partitions on the \emph{patches}, and use the same method with patch bubbles to compute the bubbles on the coarser mesh, if $h>\epsilon/|\aaa|$; otherwise, a usual Galerkin method with bilinear finite elements is used. 
The recursion will thus reach the limit of $h < \epsilon/|\aaa|$ in $O(\log \frac{|\mathbf{a}|h}{\epsilon})$ recursive calls.
For further details on the implementation and performance we refer the reader to~\cite{BMZ}.
We finally mention that, as in~\cite{Cangiani2005long}, we perform the numerical analysis of this article assuming that the bubbles are computed exactly; this allows us to understand the essence of the method, without digging into extremely technical details.
\end{rem}

In order to quickly show the advantage of using patch bubbles, we present in Figure~\ref{F:DiscreteSolutions} the graphs obtained for the residual-free bubble stabilization and the proposed method, corresponding to~\eqref{Eq:problem} with $f=1$, $\aaa = (-1,0)$ and $\epsilon=10^{-6}$, on meshes of size $50\times50$, i.e. $h=0.02$. 
The solution obtained using the newly proposed approach effectively eliminates the oscillations near the boundary layer and the sharp spike in the corner observed for the RFB solution. Furthermore, the solution produced by the BMZ method exhibits a satisfactory global behavior, reinforcing its reliability for this problem. Notably, the maximum value achieved with the RFB method is $1.4099$, whereas the proposed method yields a significantly improved maximum value of $0.9841\cong1-h$.


We end this section with the following observation regarding the optimality of the approximation $u_h$, which will be useful for our analysis, and is an immediate consequence of the coercivity stated in Lemma~\ref{Lemma:Coercivity}:
\begin{equation}\label{inf-sup}
\sup_{0\not= v_h\in V_h} \frac{a(w_h,v_h)}{\|v_h\|_\epsilon} \geq \alpha \|w_h\|_\epsilon
\quad\tfa w_h\in V_h.
\end{equation}
This inf-sup condition implies the well-posedness of the discrete problem~\eqref{eq:discrete problem} and the following optimal approximation result, with a slightly modified energy norm, which coincides with the one used by Schieweck~\cite{thesisSchieweck}.

\begin{lemma}[Cea's Lemma]
\label{Lemma:Cea}
Let $\ueps$ denote the exact solution and $u_h$ the discrete one. Then
$$
\vvvert \ueps - u_h\vvvert  \lesssim \inf_{z_h\in V_h} \vvvert  \ueps - z_h\vvvert ,
$$
where the norm $\vvvert \cdot\vvvert $ is defined in $H^1(\Omega)$ by
\begin{equation}
\label{eq:||| norm}
\vvvert  v\vvvert   := \|v\|_\epsilon + 
\sup_{0\not= z_h\in V_h} \frac{a(v,z_h)}{\|z_h\|_\epsilon}.
\end{equation}
\end{lemma}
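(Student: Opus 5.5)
The argument is the standard Céa-type one: combine Galerkin orthogonality with the discrete inf-sup condition \eqref{inf-sup}, and then use the triangle inequality in the norm $\vvvert\cdot\vvvert$. We first note that $\vvvert\cdot\vvvert$ is a norm on $H^1(\Omega)$. Its first summand is a norm, and the supremum term is a seminorm, since it is the dual norm of the linear functional $a(v,\cdot)$ restricted to $V_h$. The supremum is finite because $a$ is continuous on $H^1\times V_h$ for fixed $\epsilon$.

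Fix an arbitrary $z_h\in V_h$. Since $V_h\subset\Hoi$, Galerkin orthogonality gives $a(\ueps-u_h,v_h)=0$ for all $v_h\in V_h$. Applying \eqref{inf-sup} to $w_h:=u_h-z_h\in V_h$, we obtain
\[
\alpha\|u_h-z_h\|_\epsilon \le \sup_{0\neq v_h\in V_h}\frac{a(u_h-z_h,v_h)}{\|v_h\|_\epsilon}
= \sup_{0\neq v_h\in V_h}\frac{a(\ueps-z_h,v_h)}{\|v_h\|_\epsilon}\le \vvvert \ueps-z_h\vvvert .
\]
By the same orthogonality, the supremum part of $\vvvert u_h-z_h\vvvert$ equals the supremum part of $\vvvert\ueps-z_h\vvvert$. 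Hence
\[
\vvvert u_h-z_h\vvvert \le (1+\alpha^{-1})\vvvert\ueps-z_h\vvvert .
\]

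The triangle inequality then yields
\[
\vvvert\ueps-u_h\vvvert\le \vvvert\ueps-z_h\vvvert+\vvvert z_h-u_h\vvvert\le (2+\alpha^{-1})\vvvert\ueps-z_h\vvvert .
\]
Taking the infimum over $z_h\in V_h$ concludes the proof. The constant depends only on the coercivity constant $\alpha$ of Lemma~\ref{Lemma:Coercivity}, and is therefore independent of $\epsilon$ and $h$.

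There is no real obstacle here. The only point to watch is the choice of norm: the usual Céa bound would require a continuity constant of $a$ in $\|\cdot\|_\epsilon$, which is not uniform in $\epsilon$ because the convective term is not controlled by $\|\cdot\|_\epsilon$. That is exactly why the supremum term is built into $\vvvert\cdot\vvvert$. With this definition, continuity of $a(v,z_h)$ with respect to $\vvvert v\vvvert$ for $z_h\in V_h$ holds by construction, so no continuity estimate is needed at all.
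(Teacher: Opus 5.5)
Your proof is correct and follows the paper's argument: the inf-sup condition applied to $u_h-z_h$, Galerkin orthogonality to replace $a(u_h-z_h,\cdot)$ by $a(\ueps-z_h,\cdot)$, and a triangle inequality. The only difference is cosmetic. The paper applies the triangle inequality in $\|\cdot\|_\epsilon$ and then notes that the supremum part of $\vvvert\ueps-u_h\vvvert$ vanishes by Galerkin orthogonality, so $\vvvert\ueps-u_h\vvvert=\|\ueps-u_h\|_\epsilon$, which yields the slightly better constant $1+\alpha^{-1}$ instead of your $2+\alpha^{-1}$.
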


\begin{proof}
Let $z_h\in V_h$. Due to~\eqref{inf-sup}, there exists $v_h \in V_h$ such that
\begin{equation*}
\|z_h - u_h\|_\epsilon \lesssim \frac{a(z_h-u_h,v_h)}{\|v_h\|_\epsilon}.
\end{equation*}
Due to Galerkin orthogonality,
\[
\|z_h - u_h\|_\epsilon \lesssim
\frac{a(z_h-\ueps,v_h)}{\|v_h\|_\epsilon} 
\leq \sup_{0\not= w_h\in V_h} \frac{a(z_h-\ueps,w_h)}{\|w_h\|_\epsilon} \leq \vvvert z_h -\ueps\vvvert .
\]

Using the triangular inequality we arrive at
$$
\|\ueps - u_h\|_\epsilon \leq \|\ueps - z_h\|_\epsilon + \|z_h-u_h\|_\epsilon
    \lesssim \vvvert \ueps  - z_h\vvvert .
$$
Moreover, again by Galerkin orthogonality one gets
$$
\|\ueps - u_h\|_\epsilon = \vvvert  \ueps - u_h\vvvert ,
$$
which leads to the desired assertion.
\end{proof}

\subsection{Main result}\label{Sec:mainresult}

In this section we state the main theoretical result of this paper,
which is an error estimate in the particular case of $\Omega = (0,1)^2$, $\aaa = (-1,0)$ and $f \equiv 1$.
It is worth noticing that, in this setting, the solution exhibits three boundary layers: one (elliptic) of width $\epsilon$ at $x=0$, and two (parabolic) of width $\sqrt{\epsilon}$ at $y=0$ and $y=1$.

Estimates on general domains and with general right-hand sides as precise and robust as the ones we obtain here seem to be unreachable for the moment.
We focus on a particular case which allows us to infer the potential capabilities of our proposed method.

We will perform the error estimates in the particular case that only one bubble per element and one per each side touching the boundary are used:
\begin{itemize}
\item for each element $T$, we consider the space $B_T := \Span\{\bT
\}$, where $\bT$ is the \emph{element bubble} defined as the solution to:
\[
-\epsilon \Delta \bT + \aaa \cdot \nabla \bT
= l^T(x,y) 
\quad\text{in $T$}, 
\qquad 
\bT = 0 
\quad\text{on $\partial T$};
\] 
where
\begin{equation}\label{def:l_b-l_tT}
l^T(x,y) = \begin{cases}
    l_b(y):=1-y/h, \quad&\text{if $T$ touches $\{y=0\}$},
    \\
    l_t(y):=1-(1-y)/h,  \quad&\text{if $T$ touches $\{y=1\}$},
    \\
    1, \quad&\text{otherwise}.
\end{cases}
\end{equation}

\item and also for each edge $S\in\Sigma_h^+$, we consider the space $B_S := \Span\{\bS\}$, with the residual-free bubble $\bS$ defined as follows:
\[
-\epsilon \Delta \bS + \aaa \cdot \nabla \bS = l^S(x,y), 
\quad\text{in $\patchS$}, 
\qquad 
\bS = 0 \quad\text{on $\partial\patchS$},
\]
where:
\begin{equation}\label{def:l_b-l_tS}
l^S(x,y) = \begin{cases}
    l_b(y), \quad&\text{if $S$ is vertical and touches $\{y=0\}$},
    \\
    l_t(y),  \quad&\text{if $S$ is vertical and touches $\{y=1\}$},
    \\
    1, \quad&\text{otherwise}.
\end{cases}
\end{equation}
Here, $\Sigma_h^+$ denotes the set of interior edges of the partition that touch the outflow boundary $\{x=0\}\cup\{y=0\}\cup \{y=1\}$. 
\end{itemize}
\begin{thm}[Main result]\label{Thm:main}
    Let $\ueps$ be the exact solution of~\eqref{eq:weak} and let $u_h$ be the discrete solution to~\eqref{eq:discrete problem}, both corresponding to $\Omega = (0,1)^2$, $\aaa = (-1,0)$ and $f \equiv 1$.
    Then, there exists a constant $C$, independent of $\epsilon$ and $h$ and $\hat{\epsilon}_0 >0$ such that
    \[ \vvvert \ueps - u_h\vvvert \le C (\epsilon/h)^{1/4}, \]
if $0<\epsilon/h \leq \hat{\epsilon}_0$.
\end{thm}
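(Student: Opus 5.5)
By Lemma~\ref{Lemma:Cea}, it suffices to construct a single $z_h\in V_h$ with $\vvvert \ueps - z_h\vvvert\lesssim(\epsilon/h)^{1/4}$. We build $z_h$ as a discrete counterpart of the asymptotic expansion of $\ueps$ from~\cite{GieEtAl:13}. For $\aaa=(-1,0)$, $f\equiv1$, this expansion reads
\[
\uas = u^0 + \theta_0 + \theta_b + \theta_t + \text{corner correctors},
\]
where $u^0(x,y)=x$ is the outer (reduced) solution. The term $\theta_0\approx -e^{-x/\epsilon}$ is the elliptic layer at $x=0$. The terms $\theta_b,\theta_t$ are the parabolic layers near $y=0$ and $y=1$; they are expressed through the complementary error function in $y/\sqrt{\epsilon x}$ and $(1-y)/\sqrt{\epsilon x}$, and have width $\sqrt{\epsilon}$. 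The remainder satisfies $\|\ueps-\uas\|_\epsilon\lesssim\epsilon^{1/4}$ (or better). Its contribution to the supremum part of $\vvvert\cdot\vvvert$ is controlled by $\|\nabla(\ueps-\uas)\|$ and $\|\ueps-\uas\|$ against $\|z_h\|_\epsilon$. Precisely, $a(w,z)\le \epsilon^{1/2}\|\nabla w\|\,\|z\|_\epsilon + |\int \aaa\cdot\nabla w\, z e^{-\aaa\cdot\xxx}|$, and after integrating the convective term by parts this is $\lesssim\|w\|_\epsilon\|z\|_\epsilon/\sqrt\epsilon$ at worst. So for the remainder I would rather use the equation directly: $a(\ueps-\uas,z)$ equals the residual of $\uas$ tested against $z$, and the estimates of~\cite{GieEtAl:13} bound this residual by a small power of $\epsilon$ in $L^2$.

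Next I construct the interpolant $z_h=I_h\uas$ piece by piece. The smooth part $u^0$ is interpolated by its bilinear nodal interpolant plus element bubbles, with $f\equiv1$ making the element bubbles with $l^T\equiv1$ exactly residual-free. The key observation is that the residual of $\Q^1$ functions is piecewise polynomial, of the form $1$ or $l_b$, $l_t$ in the boundary rows. Consequently $\bT$ and $\bS$ reproduce the layers locally. In elements touching $x=0$, the patch bubble across a vertical edge, together with the element bubble, reproduces the elliptic layer $e^{-x/\epsilon}$. Near $y=0$, the bubbles with right-hand side $l_b$, which are the residual of the nodal interpolant of $x(1-\cdots)$, capture the parabolic layer inside the first row of elements. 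Concretely, I would set $z_h=\Pi_h\uas+\sum_T c_T\bT+\sum_{S\in\Sigma_h^+}c_S\bS$ with coefficients chosen so that $\uas-z_h$ is small in $\|\cdot\|_\epsilon$ and so that $-\epsilon\Delta(\uas-z_h)+\aaa\cdot\nabla(\uas-z_h)$ is small elementwise.

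For the estimate, I split $\vvvert\uas-z_h\vvvert$ into the $\|\cdot\|_\epsilon$ part and the dual part. The dual part I bound by elementwise integration by parts: $a(w,v_h)=\sum_T\int_T(-\epsilon\Delta w+\aaa\cdot\nabla w)v_he^{-\aaa\cdot\xxx}+\sum_S\epsilon\int_S[\partial_n w]v_he^{-\aaa\cdot\xxx}$. The volume residual vanishes wherever the bubbles are residual-free. The remaining terms are of two kinds. The first is the flux jumps $\epsilon[\partial_nw]$ of the bubbles across element edges, which are of size $O(\epsilon\cdot 1/\sqrt{\epsilon})$ on a strip of width $\sqrt{\epsilon h}$ and are controlled with a trace inequality $\|v\|_{L^2(S)}^2\lesssim \|v\|\,\|\nabla v\|+h^{-1}\|v\|^2$. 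The second is the mismatch between the true parabolic layer and its bubble approximation away from the outflow rows, measured on the region where $\theta_b$ is not negligible, namely $y\lesssim\sqrt{\epsilon}$. Balancing $\epsilon^{1/2}\|\nabla v\|\le\|v\|_\epsilon$ against these layer sizes should produce $(\epsilon/h)^{1/2}$ and $(\epsilon/h)^{1/4}$ contributions. The latter comes from the elementwise $L^2$ mass of the parabolic layer error in boundary rows, which is $\sqrt{\epsilon h}\cdot h^{-1}$ after scaling. This is also where the hypothesis $\epsilon/h\le\hat\epsilon_0$ enters, since it makes the exponentially small bubble tails negligible.

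The main obstacle is the second step: obtaining sharp, $\epsilon$-uniform pointwise and derivative bounds on the exact bubbles $\bT$, $\bS$ on patches of size $h$. In particular, one must show that in the rows adjacent to $y=0,1$ the patch bubbles match the $\operatorname{erfc}$-type parabolic layer of $\uas$ up to $(\epsilon/h)^{1/4}$. I would attack this by rescaling to the reference square with $\hat\epsilon=\epsilon/h$ and applying the asymptotic expansion of~\cite{GieEtAl:13} to each bubble problem itself. This is natural, since each bubble is again a channel-flow problem with polynomial data, and it explains the appearance of $n\sqrt{\hat\epsilon}$-type quantities. The required auxiliary estimates would then be collected in the appendices.
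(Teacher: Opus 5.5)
\textbf{The interpolant is never actually built.} The whole argument depends on an explicit choice of bubble coefficients, and your proposal only states what they should achieve.

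In the paper no bubbles are used in the interior: $u_L$ is the nodal interpolant of $u^0=1-x$. (Note $u^0\neq x$: for $\aaa=(-1,0)$ the inflow is $x=1$, so the parabolic variable is $y/\sqrt{\epsilon(1-x)}$.) In the boundary rows one takes $\alpha_{i-1/2,1}=i-1$, $\alpha_{i,1}=2-2i$, and in the left column $\alpha_{N,j}=-N$, $\alpha_{N,j\pm1/2}=N$. Two requirements force these values:
(i) the zeroth-order parts of the bubbles must reproduce exactly the $O(1)$ deficit $u^0-u_L$, e.g.\ $l_b(y)(1-x)$ in the bottom row, which arises because $u_L=0$ on $y=0$;
(ii) the $O(1)$ layers that individual bubbles create at interior edges must cancel pairwise. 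These are the outflow $\theta$-layers at interior vertical edges of the top/bottom rows (Proposition~\ref{Propo:6}) and the parabolic layers along horizontal edges of the left column.

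Your heuristics point elsewhere. In $T_{N,j}$ with $2\le j\le N-1$, the vertical edge is not in $\Sigma_h^+$. Moreover, a patch bubble across it would get coefficient $0$ when matching $1-x/h$ (its zeroth order is $2h-x$, all others are $h-x$). It is the horizontal-edge patch bubbles that do the work. Also, the bubbles need not reproduce the $\mathrm{erfc}$ layer of $\ueps$: that layer is $\epsilon^{1/4}$-small in $\vvvert\cdot\vvvert$ by itself (Lemma~\ref{Lemma:5}).

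\textbf{The dual-norm step fails as planned.} Even with the right coefficients, the fluxes $\epsilon[\partial_x z_h]$ across interior vertical edges of the top/bottom rows are not $O(\sqrt\epsilon)$. At its outflow edge $x=x_{\min}$, each bubble carries the corner corrector $\zeta_\lambda=-\varphi_\lambda(x_{\min},y)e^{-(x-x_{\min})/\epsilon}$. Hence $\epsilon\partial_x\zeta_\lambda$ there has size $h$ on $y\lesssim\sqrt{\epsilon h}$. After multiplying by $|\alpha_\lambda|\sim i$, the jump is $O(1)$. The contributions of $\psi_{i,1}$ and $\psi_{i-1/2,1}$ do not cancel for $y>0$, because their profiles have widths $\sqrt{\epsilon h}$ and $\sqrt{2\epsilon h}$.

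A trace inequality valid on all of $H^1$ cannot handle this. Take $v$ to be bumps of size $\sqrt{\epsilon h}\times\sqrt{\epsilon h}$ at each such edge next to $y=0$. Then $\sum_S\int_S g_S v\simeq\sqrt{\epsilon/h}\simeq\|v\|_\epsilon$, so the best generic bound is $O(1)$, not $(\epsilon/h)^{1/4}$.

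The paper instead exploits the discrete structure of $V_h$. Since $v_h=0$ on $y=0$, it inserts the Hardy weight, using $\|\hat y\hat\zeta_\lambda\|_{\Gamma_\lambda^{\mathrm{left}}}\lesssim h\hat\epsilon^{3/4}$. It then needs $\|\hat v_h/\hat y\|_{\Gamma_\lambda^{\mathrm{left}}}\lesssim\|\hat\nabla\hat v_h\|_{\hat T}$ for $v_h\in V_h$, which it proves by
\begin{itemize}
\item splitting $v_h=q_h+b_h$ and using finite dimensionality for $q_h$;
\item the lower bound $\|\hat\nabla\hat b_h\|^2\gtrsim h^2(1+|\alpha|+|\beta|)^2\hat\epsilon^{-1/2}$ (Proposition~\ref{propo: bh_inf_bound});
\item the strengthened Cauchy--Schwarz inequality of Proposition~\ref{Propo:AngleSpace2}.
\end{itemize}
That last step is where $\epsilon/h\le\hat\epsilon_0$ is really needed (Lemma~\ref{Lemma:12}), not for discarding exponentially small tails.
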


In order to prove this result we resort to the asymptotic expansion defined and analyzed in~\cite{GieEtAl:13}, which is of the form
\begin{equation*}
\uas := u^0 + \text{correctors},
\end{equation*}
where $u^0$ denotes the $0$-th order approximation to $\ueps$, given by the solution to
\begin{equation*}
    -  \partial_x u^0 =1 \quad\text{in $\Omega$,}\qquad
        u^0 = 0 \quad\text{at } x=1,
\end{equation*}  
i.e., $u^0(x,y) = 1-x$.
The term \emph{correctors} refers to additional functions introduced to compensate for the incorrect boundary values of $u^0$. Higher-order asymptotic approximations are discussed in  \cite{GieEtAl:13} 
, but for our purposes, the zeroth-order approximation (i.e., $n = 0$ in \cite{GieEtAl:13}) is sufficient. In this case, it is shown in Lemma~\ref{Temam:ErrorBound} that
\begin{equation*}
  \| \ueps - \uas \|_\epsilon \lesssim \epsilon.
\end{equation*}

Using this approximation $\uas$, we let $\ueps_I$ denote a suitable interpolant of $\uas$, 
and then exploit Cea's Lemma~\ref{Lemma:Cea} to estimate
\begin{equation}\label{eq:Cea}
\vvvert \ueps - u_h\vvvert  \lesssim \vvvert \ueps - \ueps_I \vvvert  \le  \vvvert  \ueps - \uas\vvvert  + \vvvert  \uas - \ueps_I\vvvert .
\end{equation}

\medskip

In what follows, we will say that a function $f$ is $\est$ in $G\subseteq\Omega$ if, for all $m\in\nz$, there are constants $c_1,c_2,\gamma > 0$, independent of $\epsilon$, such that
$$
\| f\|_{H^m(G)} \leq c_1 \exp(\frac{-c_2}{\epsilon^\gamma}).
$$
Also, we will use $C$ to denote a constant independent of $\epsilon$ and the discretization parameter $h$, which may change from one line to the next one and $A \lesssim B$ will denote $A \le C\, B$.

\section{Error estimate}\label{Sec:ErrorEstimate}

This section is dedicated to estimating the second term in~\eqref{eq:Cea}, i.e., the error $\vvvert  \uas - \ueps_I\vvvert $ between the asymptotic expansion $\uas$ and a suitable interpolant $\ueps_I$ belonging to the discrete space $V_h$.
As we have already mentioned,
the first term on the right-hand side of~\eqref{eq:Cea} is already bounded by $\epsilon$, due to the results from \cite{GieEtAl:13} (see Lemma~\ref{Temam:ErrorBound}). We have included in Appendix \ref{Appendix:Temam} the specific estimates from~\cite{GieEtAl:13} that we need in our context.

In order to bound the second term, we first propose a candidate for the interpolant $\ueps_I$, as the sum of a function $u_L \in V_L$ and a function $u_B \in V_B$.
Secondly, to bound this error, we
introduce $u_B^{as}$, an asymptotic approximation of $u_B$. 
Taking into account that $u_B$ will be a combination of exact solutions on smaller domains, 
the approximation function $u_B^{as}$ will be constructed by resorting again to the formulas from~\cite{GieEtAl:13}.

\subsection{Asymptotic approximation}

As we mentioned in the introduction, we consider the asymptotic expansion $\uas$ defined and analyzed in~\cite{GieEtAl:13}, corresponding to $n=0$, which is of the form
\begin{equation*}
\uas := u^0 + \underbrace{\varphi+\xi+\theta+\zeta}_\text{correctors},
\end{equation*}
where $u^0(x,y) = 1-x$.

Several correctors are given, which imitate the behavior of the exact solution in the boundary layers:
$$
\begin{aligned}
\text{correctors} = {}&\phantom{+}\; \varphi \text{ (accounting for the parabolic boundary layer at } y=0, y=1) \\
  &+ \xi \text{ (accounting for the elliptic boundary layer)} \\
   &+ \theta \text{ (accounting for the ordinary boundary layer at } x=0) \\
 &+ \zeta \text{ (accounting for the corner layers)}
 .
\end{aligned}
$$

Using the results from Appendix \ref{Appendix:Temam}, specifically Lemma \ref{Temam:Lemma2}, Lemma \ref{Temam:eq35} and Lemma \ref{Temam:Lemma6}, 
 one can bound the correctors by
\begin{equation}\label{Eq:correctors}
\|\varphi\|_\epsilon, \|\xi\|_\epsilon, \|\zeta\|_\epsilon \lesssim \epsilon^{1/4}.
\end{equation}

In the original paper~\cite{GieEtAl:13} there is also a complementary corrector $\eta$ \cite[Eq. 53]{GieEtAl:13}, which vanishes in our case, because we consider $n=0$.

However, the crucial corrector is $\theta$, which accounts for the strong elliptic boundary layer
at $x=0$ and cannot be bounded in the same way as the others. This corrector must be compensated by the corresponding corrector for the interpolant (see Section~\ref{Sec:elliptic}).

\subsection{Some auxiliary estimates and abstract error estimates }


Below, we present two lemmas, based on results from \cite{GieEtAl:13}, that provide bounds of the error and the correctors in the $\vvvert \cdot\vvvert $-norm. 

\begin{lemma}\label{Lemma:4}
Let $\ueps$ be the exact solution of~\eqref{eq:weak} and $\uas$ be its asymptotic expansion. Then
$$
 \vvvert  \ueps - \uas\vvvert  \lesssim \epsilon + \frac{1}{\sqrt{\epsilon}} \|\ueps - \uas\|
               \lesssim \epsilon + \sqrt{\epsilon} \lesssim \sqrt{\epsilon}.
$$
\end{lemma}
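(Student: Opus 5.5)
Write $e:=\ueps-\uas$. By the definition \eqref{eq:||| norm} of the triple norm,
\[
\vvvert e\vvvert = \|e\|_\epsilon + \sup_{0\neq z_h\in V_h}\frac{a(e,z_h)}{\|z_h\|_\epsilon}.
\]
The first term is $\lesssim\epsilon$ by Lemma~\ref{Temam:ErrorBound}, so everything hinges on the supremum. The natural way to bound it is to integrate by parts in the diffusion term, so that no derivative falls on $z_h$. Then only $\|z_h\|$ appears, and the derivative of $e$ is traded for an $L^2$ norm of $e$ weighted by $\epsilon^{-1/2}$.

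\textbf{The bilinear form after integration by parts.} Expanding the definition of $a$,
\[
a(e,z)=\epsilon\intO \nabla e\cdot\nabla z\, e^{-\aaa\cdot\xxx}\,d\xxx-\epsilon\intO (\aaa\cdot\nabla e)\, z\, e^{-\aaa\cdot\xxx}\,d\xxx+\intO (\aaa\cdot\nabla e)\, z\, e^{-\aaa\cdot\xxx}\,d\xxx .
\]
Since $z\in\Hoi$, there are no boundary terms. For the last term, move the derivative onto $z\,e^{-\aaa\cdot\xxx}$. This is legitimate because $e\in\Hoi$: $\uas$ satisfies the homogeneous boundary conditions, a property that should be checked from the construction in \cite{GieEtAl:13}, possibly up to \est terms. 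It gives
\[
\intO(\aaa\cdot\nabla e)\, z\, e^{-\aaa\cdot\xxx}\,d\xxx
= -\intO e\,(\aaa\cdot\nabla z)\,e^{-\aaa\cdot\xxx}\,d\xxx + \intO e\, z\, e^{-\aaa\cdot\xxx}\,d\xxx .
\]
Then use Cauchy--Schwarz together with $\|\nabla z\|\le \epsilon^{-1/2}\|z\|_\epsilon$ and $\|z\|\le\|z\|_\epsilon$. The resulting bound is
\[
|a(e,z)|\lesssim \big(\sqrt\epsilon\,\|\nabla e\| + \epsilon\|\nabla e\| + \epsilon^{-1/2}\|e\| + \|e\|\big)\,\|z\|_\epsilon
\lesssim \big(\|e\|_\epsilon + \epsilon^{-1/2}\|e\|\big)\|z\|_\epsilon .
\]
This yields the first inequality of Lemma~\ref{Lemma:4}, again after inserting $\|e\|_\epsilon\lesssim\epsilon$. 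Note that the argument never uses that $z_h$ is discrete; it holds for any $z\in\Hoi$.

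\textbf{The second inequality.} From Lemma~\ref{Temam:ErrorBound}, $\|e\|\le\|e\|_\epsilon\lesssim\epsilon$. Hence $\epsilon^{-1/2}\|e\|\lesssim\sqrt\epsilon$, and finally $\epsilon+\sqrt\epsilon\lesssim\sqrt\epsilon$ because $\epsilon\le\epsilon_0$.

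\textbf{Expected obstacle.} The integration by parts requires $e\in\Hoi$, that is, $\uas$ vanishing exactly on $\partial\Omega$. If the correctors of \cite{GieEtAl:13} satisfy the boundary conditions only up to \est remainders, I would subtract a smooth lift of those remainders. Such a lift is \est in every norm, so it does not affect the bound.
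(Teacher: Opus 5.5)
Your proposal is correct and is the paper's proof. The paper bounds the diffusion term by Cauchy--Schwarz, integrates the convective term by parts so the derivative lands on $z_h e^{-\mathbf{a}\cdot\mathbf{x}}$, and then uses $\|\nabla z_h\|\le\epsilon^{-1/2}\|z_h\|_\epsilon$ with $\|e\|\le\|e\|_\epsilon\lesssim\epsilon$; it only combines the two convective terms into $-(1-\epsilon)\int\partial_x e\,z_h e^{x}$ beforehand. Your \emph{expected obstacle} is not one: the boundary term $\int_{\partial\Omega}(\mathbf{a}\cdot\nu)\,e\,z_h\,e^{-\mathbf{a}\cdot\mathbf{x}}$ vanishes because $z_h\in H_0^1(\Omega)$, so you need no information on the boundary values of $u^{as}$ and no lift. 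Also, contrary to your opening remark, the derivative does fall on $z_h$, and that is exactly where the factor $\epsilon^{-1/2}$ comes from.
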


\begin{proof}
From Lemma~\ref{Temam:ErrorBound} we have 
$
\|\ueps - \uas\|_\epsilon \lesssim \epsilon.
$
For $v_h \in V_h$ we estimate
\begin{equation*}
  \begin{aligned}
   a(\ueps - \uas,v_h) 
    &= \epsilon\intO \nabla (\ueps - \uas)\nabla (v_h) e^x dx\,dy 
          + \epsilon\intO \partial_x(\ueps - \uas)v_h e^x dx\,dy \\ 
     &\qquad  -\intO \partial_x(\ueps - \uas)v_h e^x dx\,dy \\
      &\lesssim \|\ueps - \uas\|_\epsilon \|v_h\|_\epsilon  -(1-\epsilon)\intO \partial_x(\ueps - \uas) v_h e^x dx\,dy.
  \end{aligned}
\end{equation*}
Integrating by parts, one gets
\begin{equation*}
  \begin{aligned}
       -(1-\epsilon)\intO \partial_x(\ueps - \uas)v_h e^x dx\,dy 
      &= (1-\epsilon)\intO (\ueps - \uas)v_h e^x dx\,dy \\
      & \quad+ (1-\epsilon)\intO (\ueps - \uas) \partial_x v_h e^x dx\,dy \\
   & \lesssim\|\ueps -\uas\|_\epsilon\|v_h\|_\epsilon + \|\ueps-\uas\|\;\|\partial_x v_h\|,
  \end{aligned}
\end{equation*}
so that 
\[ | a(\ueps - \uas,v_h) |
      \lesssim \|\ueps - \uas\|_\epsilon \|v_h\|_\epsilon  +\|\ueps-\uas\|\;\|\partial_x v_h\|.
      \]
      
Since $\|\partial_x v_h\| \le 1/\sqrt{\epsilon} \|v_h\|_\epsilon $ and $\|\ueps - \uas\| \lesssim \|\ueps - \uas\|_\epsilon \lesssim \epsilon$ we have
\begin{equation*}
   |a(\ueps - \uas,v_h) |
 \lesssim \epsilon \|v_h\|_\epsilon +  \sqrt{\epsilon}\|v_h\|_\epsilon,
\end{equation*}
and the assertion follows.
\end{proof}

We also need to bound the correctors $\varphi$, $\xi$, $\zeta$ for $\ueps$ in the $\vvvert \cdot \vvvert $ norm.

\begin{lemma}\label{Lemma:5}
For the correctors $\varphi, \xi, \zeta$ we have the following bounds:

$$
\vvvert  \varphi\vvvert , \; \vvvert \xi\vvvert ,\; \vvvert \zeta\vvvert  \lesssim \epsilon^{1/4}.
$$
\end{lemma}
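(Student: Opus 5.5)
The norm $\vvvert\cdot\vvvert$ from~\eqref{eq:||| norm} has two pieces: the energy norm $\|\cdot\|_\epsilon$ and the dual-type term $\sup_{z_h}a(\cdot,z_h)/\|z_h\|_\epsilon$. The energy part is already controlled by~\eqref{Eq:correctors}, i.e.\ by Lemmas~\ref{Temam:Lemma2}, \ref{Temam:eq35} and \ref{Temam:Lemma6}. So it remains to bound $a(w,v_h)$ for $w\in\{\varphi,\xi,\zeta\}$ and $v_h\in V_h$ by $\epsilon^{1/4}\|v_h\|_\epsilon$. I will argue exactly as in the proof of Lemma~\ref{Lemma:4}.

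With $\aaa=(-1,0)$ the weight is $e^{x}$. For any $w\in\Hoi$ we write
\[
a(w,v_h)=\epsilon\intO \nabla w\cdot\nabla v_h\,e^x+\epsilon\intO \partial_x w\, v_h e^x-\intO \partial_x w\,v_h e^x .
\]
The first two terms are bounded by $\|w\|_\epsilon\|v_h\|_\epsilon$. The second one uses $\sqrt\epsilon\|\partial_x w\|\le\|w\|_\epsilon$ together with $\sqrt{\epsilon}\le 1$. We integrate the last term by parts; the boundary terms vanish because $w$ and $v_h$ vanish on $\partial\Omega$. This gives
\[
\intO w\,v_h e^x+\intO w\,\partial_x v_h e^x \lesssim \|w\|\,\|v_h\|+\|w\|\,\|\partial_x v_h\|.
\]
Combining with $\|\partial_x v_h\|\le\epsilon^{-1/2}\|v_h\|_\epsilon$ yields
\[
|a(w,v_h)|\lesssim \bigl(\|w\|_\epsilon+\epsilon^{-1/2}\|w\|\bigr)\|v_h\|_\epsilon .
\]

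The key step is therefore an $L^2$ bound of the form $\|w\|\lesssim\epsilon^{3/4}$ for each of the three correctors. This is the natural scaling for a layer corrector of width at most $\sqrt\epsilon$ that is bounded in $L^\infty$. For $\varphi$, the parabolic corrector has the form $g(x,y/\sqrt\epsilon)$ with Gaussian or erfc-type decay. Its squared $L^2$ norm is $O(\sqrt\epsilon)$, so $\|\varphi\|=O(\epsilon^{1/4})$, which only gives $\epsilon^{-1/2}\|\varphi\|\sim\epsilon^{-1/4}$. That is not enough.

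This is the main obstacle: the crude $L^2$ route fails for $\varphi$. The fix I would try first is to avoid moving the derivative onto $v_h$ for $\varphi$. Instead, I would use the equation $\varphi$ satisfies: in~\cite{GieEtAl:13} it solves the heat-type problem $-\partial_x\varphi-\epsilon\partial_{yy}\varphi=0$, up to e.s.t.\ terms. Substituting this gives
\[
\intO \partial_x\varphi\,v_h e^x=-\epsilon\intO\partial_{yy}\varphi\,v_he^x=\epsilon\intO\partial_y\varphi\,\partial_yv_h e^x ,
\]
which is bounded by $\sqrt\epsilon\|\partial_y\varphi\|\,\|v_h\|_\epsilon\le\|\varphi\|_\epsilon\|v_h\|_\epsilon\lesssim\epsilon^{1/4}\|v_h\|_\epsilon$. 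The same trick applies to the corner corrector $\zeta$, whose equation is likewise of heat type. The elliptic corrector $\xi$ is treated either the same way, using its defining PDE, or directly: it is localized near the corners $\{x=0\}\times\{0,1\}$ in a region of size $\epsilon\times\sqrt\epsilon$, so $\|\xi\|\lesssim\epsilon^{3/4}$ and the crude bound suffices.

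Any remainders, such as cutoff-function terms, are e.s.t.\ by Appendix~\ref{Appendix:Temam} and are negligible. In each case the result is $\vvvert w\vvvert\lesssim\epsilon^{1/4}$.
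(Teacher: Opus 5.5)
Your argument for $\varphi$ is correct, but it takes a different route from the paper. The paper keeps $\int_\Omega \partial_x\varphi\, v_h e^x$ and bounds it by $\|\partial_x\varphi\|\,\|v_h\|$. That step needs the anisotropic bound $\|\partial_x\varphi\|\lesssim\epsilon^{1/4}$, i.e.\ differentiating the parabolic layer along the flow costs no power of $\epsilon$. This bound is true for this corrector, but it is not what Lemma~\ref{Temam:Lemma2} says as restated in the appendix: that lemma only gives $\|\partial_x\varphi\|+\|\partial_y\varphi\|\lesssim\epsilon^{-1/4}$. You instead substitute $\partial_x\varphi=-\epsilon\partial_{yy}\varphi$ and move one $y$-derivative onto $v_h$. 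There are no boundary terms because $v_h=0$ on $\{y=0\}\cup\{y=1\}$. This gives $|a(\varphi,v_h)|\lesssim\|\varphi\|_\epsilon\|v_h\|_\epsilon$ up to e.s.t.\ cutoff terms. Your route needs only the energy bound \eqref{Eq:correctors}, so it is self-contained relative to what the appendix actually states. The paper's route is a one-liner once the sharper derivative bound is granted.

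The parts on $\zeta$ and $\xi$ contain wrong claims that you should repair.

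\emph{The corrector $\zeta$.} It is not of heat type. By definition it solves the ordinary-layer equation $-\epsilon\partial_{xx}\zeta-\partial_x\zeta=0$, and $\zeta=-\varphi(0,y)e^{-x/\epsilon}$ does not satisfy $\partial_x\zeta=-\epsilon\partial_{yy}\zeta$. Your idea survives if you use the correct equation: since $v_h=0$ at $x=0$ and $x=1$, you get $\int_\Omega\partial_x\zeta\,v_he^x=\epsilon\int_\Omega\partial_x\zeta\,\partial_x(v_he^x)\lesssim\|\zeta\|_\epsilon\|v_h\|_\epsilon$. Alternatively, your own crude bound already suffices. Lemma~\ref{Temam:Lemma6} gives $\|\zeta\|\lesssim\epsilon^{3/4}$, so $\epsilon^{-1/2}\|\zeta\|\lesssim\epsilon^{1/4}$. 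This is essentially the paper's argument, which phrases it through Hardy's inequality and $\|x\partial_x\zeta\|\lesssim\epsilon^{3/4}$.

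\emph{The corrector $\xi$.} Your ``direct'' option is false. The corner correctors at $x=0$ are the $\zeta$'s; $\xi$ is attached to the inflow corners $(1,0)$ and $(1,1)$. Its datum $\gamma$ has unit size on an $O(1)$ part of $\{y=0\}$. So $\xi$ is a unit-amplitude layer of width $\sqrt\epsilon$, with $\|\xi\|\sim\epsilon^{1/4}$ rather than $\epsilon^{3/4}$, and the crude bound would only give $\epsilon^{-1/4}$. Keep your PDE option instead; it is in fact exact. Since $-\epsilon\Delta\xi-\partial_x\xi=0$ in $\Omega$ and $e^xv_h\in H^1_0(\Omega)$, you get $a(\xi,v_h)=0$, exactly as in the paper.

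\emph{A minor point.} In your integration by parts, only $v_h$ vanishes on $\partial\Omega$; the correctors do not. That is all you need, but the phrase ``for any $w\in H^1_0$'' does not cover $\varphi,\xi,\zeta$.
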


\begin{proof}
In \eqref{Eq:correctors} a bound in the $\|\cdot\|_\epsilon$-norm was already stated. Similar as in the proof of
Lemma \ref{Lemma:4} for $0\not= v_h\in V_h$ we estimate
$$
\begin{aligned}
\frac{a(\varphi,v_h)}{\|v_h\|_\epsilon} &\lesssim \|\varphi\|_\epsilon
        + (1-\epsilon)\frac{|\intO \partial_x \varphi \,v_h e^xdx\,dy|}{\|v_h\|_\epsilon} \\
   &\lesssim \|\varphi\|_\epsilon  + \frac{\|\partial_x \varphi\|\,\|v_h\|}{\|v_h\|_\epsilon} 
   \lesssim \|\varphi\|_\epsilon  + \|\partial_x \varphi\| \lesssim \epsilon^{1/4},
\end{aligned}
$$
by Lemma \ref{Temam:Lemma2}.

Now, integrating by parts, we estimate $\zeta$:
$$
\begin{aligned}
\frac{a(\zeta,v_h)}{\|v_h\|_\epsilon} 
&\leq 
\|\zeta\|_\epsilon + \frac{|\intO x \partial_x \zeta \frac{v_h e^x}{x}dx\,dy|}{\|v_h\|_\epsilon} 
\lesssim \|\zeta\|_\epsilon + \frac{\epsilon^{3/4} \|\nabla(v_he^x)\|}{\|v_h\|_\epsilon},
\end{aligned}
$$
where the last estimate follows from Lemma~\ref{Temam:Lemma6} and Hardy's inequality.
Finally,
$$
\begin{aligned}
\frac{a(\zeta,v_h)}{\|v_h\|_\epsilon} 
        &\lesssim \|\zeta\|_\epsilon + \frac{\epsilon^{3/4} \|\nabla(v_he^x)\|}{\|v_h\|_\epsilon}
        = \|\zeta\|_\epsilon + \frac{\epsilon^{1/4} \epsilon^{1/2}\|\nabla(v_he^x)\|}{\|v_h\|_\epsilon} 
              \lesssim \epsilon^{1/4}.
\end{aligned}
$$
Noting that $\xi$ fulfills $-\epsilon \Delta \xi - \partial_x \xi =0$ in $\Omega$ (see \cite[Eq. 34]{GieEtAl:13}), we have that
$
a(\xi,v_h) = 0
$
because $v_h$ vanishes on $\partial\Omega$.
\end{proof}

\subsection{Accurate interpolant} \label{Sec:interpolant} 

For $i,j=1,\dots N$ define the element $T_{i,j}$ with left bottom vertex $(x_i,y_j)$, $x_i= 1- ih$ and
$ y_j = (j-1)h$. The edge $S_{i+1/2,j}$ is the one shared by element $T_{i,j}$ and $T_{i+1,j}$ and
$S_{i,j+1/2}$ the one shared by $T_{i,j}$ and $T_{i,j+1}$ (see Figure \ref{Fig:T_Labels}).

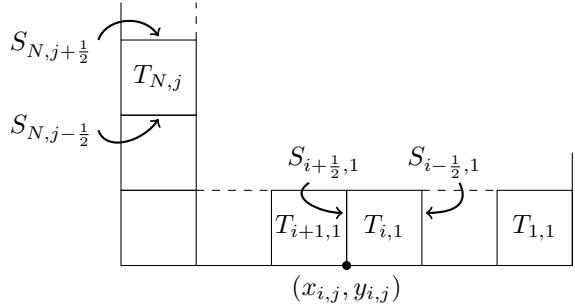
\begin{figure}[ht]
\begin{center}
\begin{tikzpicture}[scale=1]
    \def\h{1}
    \def\H{1}

    \foreach \j in {0,...,2} {
        \draw (0,\j*\H) rectangle ++(\h,\H);
    }

    \draw (0,3*\H) -- (0*\h,3.5*\H);
    \draw[dashed] (\h,3*\H) -- (\h,3.5*\H);


    \node at (0.5*\h,2.5*\H) {$T_{N,j}$};

    \draw[->, thick] (-0.3*\h,3*\H) .. controls +(0.1,0.3) and +(-0.2,0.3) .. (0.5*\h,3.05*\H);
    \draw[->, thick] (-0.3*\h,1.8*\H) .. controls +(0.1,-0.3) and +(-0.2,-0.3) .. (0.5*\h,1.95*\H);

    \node[right] at (-1.6*\h,2.9*\H) {$S_{N,j+\frac{1}{2}}$};
    \node[right] at (-1.6*\h,1.8*\H) {$S_{N,j-\frac{1}{2}}$};

    \foreach \i in {2,3,5} {
        \draw (\i*\h,0) rectangle ++(\h,\H);
    }

    \draw (1*\h,0) -- (2*\h,0);
    \draw[dashed] (1*\h,\H) -- (2*\h,\H);
    
    \draw (4*\h,0) -- (5*\h,0);
    \draw[dashed] (4*\h,\H) -- (5*\h,\H);

    \draw (6*\h,0) -- (6*\h,1.5*\H);

    \node at (2.5*\h,0.5*\H) {$T_{i+1,1}$};
    \node at (3.5*\h,0.5*\H) {$T_{i,1}$};
    \node at (5.5*\h,0.5*\H) {$T_{1,1}$};
    
    \filldraw (3*\h,0) circle (1.5pt);
    \node[below] at (3*\h,0) {$(x_{i,j}, y_{i,j})$};

    \draw[->, thick] (2.5*\h,1.1) .. controls +(-0.3,-0.2) and +(-0.3,0) .. (2.95*\h,0.7*\H);
    \draw[->, thick] (4.5*\h,1.1) .. controls +(0.1*\h,-0.2) and +(0.5*\h,0) .. (4.05*\h,0.7*\H);

    \node[below] at (2.7*\h,1.7*\H) {$S_{i+\frac{1}{2},1}$};
    \node[below] at (4.3*\h,1.7*\H) {$S_{i-\frac{1}{2},1}$};

\end{tikzpicture}
\end{center}
\vspace{-0.7 cm}
  \caption{\small\label{Fig:T_Labels}
  Labeling of elements on the bottom parabolic and left elliptic boundaries.}

\end{figure}


    
    

    
    
    





The construction of the functions
$u_L$ and $u_B$ defining the interpolant $\ueps_I$ starts with the introduction of several index sets. First, we define $\Lambda$, as the set of indices corresponding to the bubbles whose supports intersect the boundary layers:
\begin{equation}\label{def:Lambda+u_B}
\Lambda := \Lambda_B \cup \Lambda_T \cup \Lambda_L,
\qquad\text{and}\qquad
u_B = \sum_{\lambda \in \Lambda} \alpha_\lambda \psi_\lambda ,
\end{equation}
with
\begin{equation*}
\begin{aligned}
    \Lambda_B &= \{ (i,1),(i+1/2,1): i=1, \ldots N-1\},\\
    \Lambda_T &= \{ (i,N),(i+1/2,N): i=1, \ldots N-1\},\\
    \Lambda_L &= \{ (N,j): j=1,\ldots,N \}
    \cup \{ (N,j+1/2): j=1,\ldots,N-1 \}.
\end{aligned}
\end{equation*}
This definition ensures that we include both element bubbles and patch bubbles associated with elements adjacent to the boundaries where layers are expected to form (i.e., on $y=0$, $y=1$ and $x=0$).
Given a bubble index $\lambda \in \Lambda$, we define $\Omega_\lambda$ as the support of $\psi_\lambda$, and the coefficients $\alpha_\lambda$ to be determined in the following sections.

Given $\lambda\in\Lambda$, for the bubble $\psilambda$ we can define correctors $\theta_{\lambda},\varphi_{\lambda},\xi_{\lambda},\zeta_{\lambda}$ in $\Omega_\lambda$ following the definition in~\cite{GieEtAl:13}, as previously done. Such that
\[\psilambdaas =\psi_\lambda^0 + \theta_{\lambda}+\varphi_{\lambda}+\xi_{\lambda}+\zeta_{\lambda}. \]  Summing up over all elements in the boundary layers, with the corresponding coefficients, we obtain global correctors for $u_B$:
\begin{equation}\label{eq:defBubbCorr}
    \theta_B := \summLA \theta_\lambda,
    \quad
    \varphi_B := \summLA \varphi_\lambda,
    \quad
    \xi_B := \summLA \xi_\lambda,
    \quad
    \zeta_B := \summLA \zeta_\lambda,
\end{equation}
which instead allow us to express \uBas as
\begin{equation}\label{def:uB_ass}
\uBas = u_B^0 + \underbrace{\theta_B+ \varphi_B +\xi_B + \zeta_B}_{\text{correctors for }u_B},
\qquad\text{with } u_B^0  = \summLA \psi_\lambda^0.
\end{equation}

We will now define the interpolant as $\ueps_I:=u_L + u_B$ with $u_L \in V_L$ and bound $\vvvert  \uas - \ueps_I\vvvert $ as follows:
\begin{align*}
\vvvert  \uas - \ueps_I\vvvert &=  \vvvert \uas - (u_L + u_B) +\uBas - \uBas\vvvert  \\
       & \leq \vvvert u^0 - (u_L + u_B^0)\vvvert   + \vvvert \text{correctors for }\uas\vvvert  + \vvvert \text{correctors for }u_B\vvvert  \\
       &\qquad + \vvvert u_B^{as} - u_B\vvvert .
\end{align*}
In Proposition~\ref{prop:bilinear0} we prove that $u^0 - (u_L + u_B^0) = 0$ and devote Section~\ref{Sec:correctors} to bound the other terms; summarizing the results in Theorem~\ref{Thm:final error estimate}.




\subsubsection{Interpolant in the interior ($u_L$)}

Define ${\mathaccent 23 \Omega}_h := \bigcup \{T_{i,j} \;|\; i=1, \dots N-1,\; j=2,N-1\}$, the domain $\Omega$
without the elements touching the parabolic and elliptic boundaries.

In ${\mathaccent 23 \Omega}_h$ we define
$$
\ueps_I := u_L = \text{piecewise bilinear interpolant of } u^0.
$$
Since $u^0 = 1-x$, in ${\mathaccent 23 \Omega}_h$ the interpolation error fulfills 
$\ueps - \ueps_I = u^0 - \ueps_I + \est = \est$ in ${\mathaccent 23 \Omega}_h$.
The bilinear interpolant $u_L$ is extended to $\bar{\Omega}$ by prescribing zero values on the boundary $\partial\Omega$. 

\subsubsection{Interpolant in the parabolic boundary layers}

In the elements at the top and bottom (parabolic boundary) it holds
$$
\begin{aligned}
u^0 - u_L &= l_b(y)(1-x) = (1- \frac{y}{h})(1-x),\quad\text{if } 0\leq y \leq h \text{ and } \\
u^0 - u_L &= l_t(y)(1-x) = (1 + \frac{1}{h}(y-1))(1-x),\quad\text{if } 1-h \leq y \leq 1.
\end{aligned}
$$
We now determine coefficients $\alpha_\lambda$ for $\lambda\in\Lambda_B\cup\Lambda_T$ such that
$
u^0 - u_L- u_B^0
$
is small, where $u_B$ satisfies
\begin{equation}\label{Eq:psiI}
u_B = \summLA[B] \psi_\lambda + \summLA[T] \psi_\lambda, 
\qquad\text{in } (h,1-h)\times\big((0,h)\cup (1-h,h)\big),
\end{equation}
i.e., in the elements touching the parabolic boundary.
For the residual-free bubbles $\psi_\lambda$,
 in $T_{i,1}$ and $T_{i,N}$, for $i=1,2,\dots,N-1$, we have
\begin{equation}\label{Eq:psi}
    \left\{
\begin{aligned}
     \psi^0_{i,1}        &= l_b(y)(x_{i-1}-x), \\
        \psi^0_{i-1/2,1} &= l_b(y)(x_{i-2}-x),\\
        \psi^0_{i+1/2,1} &= l_b(y)(x_{i-1}-x).\\
\end{aligned}
    \right.
    \quad\text{and}\quad
    \left\{
\begin{aligned}
        \psi^0_{i,N}     &= l_t(y)(x_{i-1}-x), \\
        \psi^0_{i-1/2,N} &= l_t(y)(x_{i-2}-x),\\
        \psi^0_{i+1/2,N} &= l_t(y)(x_{i-1}-x).\\
\end{aligned}
    \right.
\end{equation} 
With $l_b$ and $l_t$ defined by~\eqref{def:l_b-l_tT} and~\eqref{def:l_b-l_tS}.
In order to make $u^0 - u_L - u_B^0$ small, we make the following choice of coefficients, for $i=2,\dots N$:
\begin{equation}\label{Eq:coefficients}
    \left\{
\begin{aligned}
        \alpha_{1,1} &:=0, \\
        \alpha_{i-1/2,1} &:= i-1, \\
        \alpha_{i,1}  &:= -2\alpha_{i-1/2,1} = 2 -2 i.
\end{aligned}
    \right.\quad
    \left\{
\begin{aligned}
        \alpha_{1,N} &:=0, \\
        \alpha_{i-1/2,N} &:= i-1, \\
        \alpha_{i,N}  &:= -2\alpha_{i-1/2,N} = 2 -2 i.
\end{aligned}
    \right.
\end{equation}

Figure~\ref{fig:lateralView} presents a schematic lateral view of the bubbles on the parabolic boundary, with the coefficients given by~\eqref{Eq:coefficients}. In particular, it is interesting to observe the interaction between element and patch bubbles. The coefficients of the element bubbles are defined to compensate the interior layers generated by each patch bubble, allowing an accurate representation of the behavior of the line $l_b(y)(1-x)$.

\begin{figure}[h]
\centering
    \begin{tikzpicture}[scale=0.45]
    
        \draw[line width=1pt, gray] (-8,0) -- (4,0);
        
        \draw[thick, b2, smooth] plot coordinates {
          (0,0) (0.45,1) (4,0)};
        \draw[thick, b2, smooth] plot coordinates {
            (-2,0) (-1.55,2) (2,0)};
        \draw[thick, b2, smooth] plot coordinates {
            (-4,0) (-3.55,3) (0,0)};
        \draw[thick, b2, smooth] plot coordinates {
            (-6,0) (-5.55,4) (-2,0)};
        \draw[thick, b2, smooth] plot coordinates {
            (-8,0) (-7.55,5) (-4,0)};
        
        \draw[thick, gray, smooth] plot coordinates {
            (0,0) (0.4,-0.9) (2,0)};
        \draw[thick, gray, smooth] plot coordinates {
            (-2,0) (-1.6,-1.9) (0,0)};    
        \draw[thick, gray, smooth] plot coordinates {
            (-4,0) (-3.6,-2.9) (-2,0)};
        \draw[thick, gray, smooth] plot coordinates {
            (-6,0) (-5.6,-3.9) (-4,0)};
        \draw[thick, gray, smooth] plot coordinates {
            (-8,0) (-7.6,-4.9) (-6,0)};

        \draw[thick, bT] (4,0) -- (-6,3);   
        \draw[thick, bT] (-8,0) -- (-6,3);   
         
        \draw[thick, gray] (4,0.15) -- (4,-0.15);
        \draw[thick, gray] (2,0.15) -- (2,-0.15);
        \draw[thick, gray] (0,0.15) -- (0,-0.15);
        \draw[thick, gray] (-2,0.15) -- (-2,-0.15);
        \draw[thick, gray] (-4,0.15) -- (-4,-0.15);
        \draw[thick, gray] (-6,0.15) -- (-6,-0.15);
        \draw[thick, gray] (-8,0.15) -- (-8,-0.15);

        \node[b2] at (-4,5) {$\alpha_{i+\nicefrac12,1}\psi_{i+\nicefrac{1}{2},1}$};
        \node[gray] at (-1,-3) {$\alpha_{i,1}\psi_{i,1}$};
        \node[bT] at (3,2) {$
        \displaystyle
        \summLA[B]\psi_\lambda$};
        \end{tikzpicture}
    \caption{\small\label{fig:lateralView} Schematic lateral view of the bubbles on the parabolic boundary, with the coefficients given by~\eqref{Eq:coefficients}.}
\end{figure}
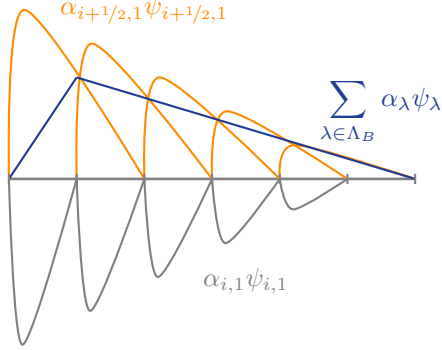
With this choice of coefficients we can prove the following result:

\begin{propo}\label{Propo:6}
Assume $u_B$ is defined by \eqref{Eq:psiI} in the parabolic boundary with the coefficients  
$\alpha_{i+1/2,1}, \alpha_{i,1}, \alpha_{i+1/2,N}, \alpha_{i,N}$ given by 
\eqref{Eq:coefficients}. Then,

$$
u_B^0 = \sum_{i=1}^{N-1} (\alpha_{i+1/2,1}\psi^0_{i+1/2,1} + \alpha_{i,1}\psi^0_{i,1}) = l_b(y)(1-x),
$$
for $h\leq x \leq 1$ and $0\leq y \leq h$
as well as 
$$
u_B^0 = \sum_{i=1}^{N-1} (\alpha_{i+1/2,N}\psi^0_{i+1/2,N} + \alpha_{i,N}\psi^0_{i,N}) = l_t(y)(1-x),
$$
for $h\leq x \leq 1$ and $1-h\leq y \leq 1$. Moreover
$$
(\alpha_{i-1/2,1}\psi^0_{i-1/2,1} + \alpha_{i,1}\psi^0_{i,1})_{|S_{i+1/2,1}} = 0,
$$
$$
(\alpha_{i-1/2,N}\psi^0_{i-1/2,N} + \alpha_{i,N}\psi^0_{i,N})_{|S_{i+1/2,N}} = 0,
$$
for $i=2,\dots N-1$.
\end{propo}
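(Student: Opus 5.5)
The plan is a direct, element-by-element verification using the explicit reduced (zeroth-order) bubbles \eqref{Eq:psi}, each extended by zero outside its support $\Omega_\lambda$. I will treat only the bottom layer; the top layer is identical with $l_t$ in place of $l_b$.

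First I fix what $\psi^0_\lambda$ is. Since $\aaa=(-1,0)$, the reduced problem on $\Omega_\lambda$ is $-\partial_x\psi^0_\lambda = l_b(y)$, with $\psi^0_\lambda=0$ on the inflow edge, which is the right edge of $\Omega_\lambda$. The element $T_{i,1}$ occupies $x\in[x_i,x_{i-1}]$. The patch of $S_{i+1/2,1}$ is $T_{i,1}\cup T_{i+1,1}$, i.e.\ $x\in[x_{i+1},x_{i-1}]$. The patch of $S_{i-1/2,1}$ is $T_{i-1,1}\cup T_{i,1}$, i.e.\ $x\in[x_i,x_{i-2}]$. This reproduces exactly the formulas \eqref{Eq:psi} on $T_{i,1}$. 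It also shows that on $T_{i,1}$ the only bubbles with nonzero reduced part are $\psi_{i,1}$, $\psi_{i-1/2,1}$ (present only if $i\ge2$) and $\psi_{i+1/2,1}$ (present for $i\le N-1$, which is exactly the range $h\le x\le 1$).

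Next I sum on each $T_{i,1}$, using $x_k=1-kh$ and the coefficients \eqref{Eq:coefficients}.

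For $i=1$: since $\alpha_{1,1}=0$ and $\alpha_{3/2,1}=1$, the sum is $l_b(y)(x_0-x)=l_b(y)(1-x)$.

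For $2\le i\le N-1$: using $\alpha_{i+1/2,1}=i$, the sum is
\[
l_b(y)\big[(i-1)(x_{i-2}-x) + i(x_{i-1}-x) + (2-2i)(x_{i-1}-x)\big]
= l_b(y)\big[(i-1)x_{i-2}+(2-i)x_{i-1}-x\big].
\]
The constant simplifies as
\[
(i-1)x_{i-2}+(2-i)x_{i-1} = 1-h\big[(i-1)(i-2)+(2-i)(i-1)\big] = 1,
\]
so the sum again equals $l_b(y)(1-x)$. This proves the first identity.

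For the edge statement, I evaluate the same pair at $x=x_i$ (the edge $S_{i+1/2,1}$, i.e.\ the left edge of $T_{i,1}$):
\[
l_b(y)\big[(i-1)(x_{i-2}-x_i)+(2-2i)(x_{i-1}-x_i)\big]
= l_b(y)\big[2h(i-1)-2h(i-1)\big]=0 .
\]

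The only real obstacle is bookkeeping rather than analysis. One must correctly identify the inflow side, which fixes where each $\psi^0_\lambda$ vanishes. One must also check which bubbles overlap each element, in particular at the ends $i=1$ and $i=N-1$, so that no stray term (for example from $\psi_{N+1/2,1}$, which is not in $\Lambda_B$) is missed. Once the reduced bubbles are written as piecewise functions with zero extension, both identities reduce to the two scalar computations above.
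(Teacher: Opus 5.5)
Your proof is correct and follows the paper's argument: an element-by-element check on each $T_{i,1}$ summing the three contributions from \eqref{Eq:psi} with the coefficients \eqref{Eq:coefficients}. The only differences are cosmetic: the paper writes $x_{i-2}=x_i+2h$ and $x_{i-1}=x_i+h$ where you expand $x_k=1-kh$. You also derive \eqref{Eq:psi} from the reduced transport problem and verify the edge identity at $x=x_i$ explicitly, whereas the paper simply asserts that it follows from the definitions.
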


\begin{proof}
We prove the assertion at the bottom boundary; the top part can be proved analogously.

In $T_{1,1}$, $\psi^0(x,y) = 1 \psi^0_{3/2,1} = l_b(y)(1-x)$.
Now, let $i\in\{2,\dots N-1\}$. In $T_{i,1}$, by \eqref{Eq:psi} $\psi^0$  is given by the three contributions
$$
\begin{aligned}
  \psi^0 &= l_b(y)(\alpha_{i-1/2,1}\psi^0_{i-1/2,1} + \alpha_{i,1}\psi^0_{i,1} + \alpha_{i+1/2,1}\psi^0_{i+1/2,1}) \\
   & = l_b(y)\big[(i-1)(x_i + 2h -x) -2(i-1)(x_i+h-x) + i(x_i+h-x)\big] \\
   & = l_b(y)(x_i + ih -x) = l_b(y)(1-x).
\end{aligned}
$$
The second assertion holds by definition of the coefficients \eqref{Eq:coefficients} and the explicit
formula \eqref{Eq:psi}.
\end{proof}

\bigskip
\subsubsection{Interpolant in the elliptic outflow boundary}\label{Sec:elliptic}

At the left part of the domain (elliptic boundary)
it holds
$$
u^0 - u_L = 1-\frac{x}{h}, \qquad h < y < 1-h, \ 0 < x < h.
$$


    







The interpolant $u_B$ in the elliptic boundary has to fulfill
two conditions. First, the $\theta$-corrector of $u_B$ has to cancel the $\theta$-corrector
of $\ueps$, which requires
$$
u^0 - u_L = u_B^0,
\qquad h < y < 1-h, \ 0 < x < h.
$$
Then, since the $\theta$--correctors $\theta_{\ueps}$, $\theta_{B}$ for $\ueps$ and $u_B$, 
respectively are given by
\begin{equation}\label{eq:canceledTheta}
\begin{aligned}
  \theta_{\ueps}(x,y) & = -u^0(0,y) \exp(-x/\epsilon), \\
  \theta_{u_B}(x,y) & = -u_B^0(0,y) \exp(-x/\epsilon) = -u^0(0,y) \exp(-x/\epsilon),
\end{aligned}
\end{equation}
they cancel each other.

The second condition is that the $\varphi_B$-corrector for $u_B$ is small, since it cannot be bounded
like the one in the parabolic boundary layer, see Section \ref{Sec:correctors}. 
More precisely, we will choose $u_B$ in such a way
that $\varphi_{B} =$\est 
Both conditions are fulfilled if we choose
\begin{equation}\label{Eq:tildealpha}
- \alpha_{N,j} = \alpha_{N,j-1/2}=\alpha_{N,j+1/2}= N,\quad j=2,\dots N-1,
\end{equation}
and
$l\equiv 1$ in the definition of the residual free bubble $\psi_\lambda$, $\lambda \in \Lambda_L$,
as we will show now. 


Again, $\psi^0_{N,j}$ denotes the $0$-th order approximation of
$\psi_{N,j}$ and is given by 
\begin{equation*}
     - \partial_x \psi^0_{N,j} = 1   \quad\text{in } T_{N,j},\qquad
        \psi^0_{N,j} = 0 \text{\quad at $x=h$},
\end{equation*}
so that $\psi^0_{N,j} = h-x$,
and the same holds for $\psi_{N,j-1/2}^0,\psi_{N,j+1/2}^0$.

Then, with the choice \eqref{Eq:tildealpha} for the coefficients,
observing that, 
$$
u_B = \alpha_{N,j} \psi_{N,j} + \alpha_{N,j-1/2} \psi_{N,j-1/2}
    + \alpha_{N,j+1/2} \psi_{N,j+1/2},
    \quad\text{in $T_{N,j}$,}
$$
$j=2,\dots N-1$, we have
$$
\begin{aligned}
u_B^0 &= \alpha_{N,j} \psi_{N,j}^0 +\alpha_{N,j-1/2} \psi_{N,j-1/2}^0
    + \alpha_{N,j+1/2} \psi_{N,j+1/2}^0 \\
 & = (\alpha_{N,j}  + \alpha_{N,j-1/2} + \alpha_{N,j+1/2} )(h-x) =
             N(h-x) = \frac{h-x}{h},
\end{aligned}
$$
so that the first condition is fulfilled.

\bigskip
Regarding the second condition, note that the $\varphi$--correctors for 
$\psi_{N,j-1/2}$ and $\psi_{N,j+1/2}$ fulfill
$$
 (\varphi_{\psi_{N,j-1/2}} )_{|S_{N,j-1/2}} = \text{\est },\qquad
 (\varphi_{\psi_{N,j+1/2}} )_{|S_{N,j+1/2}} = \text{\est }
$$
and 
$$
(\alpha_{N,j} \psi_{N,j}^0 + \alpha_{N,j-1/2}\psi_{N,j-1/2}^0)_{|S_{N,j+1/2}} = 0,
$$
$$
(\alpha_{N,j} \psi_{N,j}^0 + \alpha_{N,j+1/2}\psi_{N,j+1/2}^0)_{|S_{N,j-1/2}} = 0,
$$
so that the $\varphi$--corrector $\varphi_B$ for $u_B$ in $T_{N,j}$ satisfies
$
\varphi_{B} = \est.
$

\subsubsection{Interpolant in the outflow corners}

It remains to consider the elements $T_{N,1}$ and $T_{N,N}$ at outflow corners.
We focus on  $T_{N,1}$, the element $T_{N,N}$ is treated in exactly the same way.

In $T_{N,1}$ it holds
$$
u^0 - u_L = 1-x - \frac{1-h}{h^2}xy,
$$
and we have
$$
\begin{aligned}
u_B &= \alpha_{N,1}\psi_{N,1} + \alpha_{N-1/2,1}\psi_{N-1/2,1} + \alpha_{N,1/2}\psi_{N,1/2} \\
 &= (2-2N)\psi_{N,1} + (N-1)\psi_{N-1/2,1} + N\psi_{N,1/2},
\end{aligned}
$$
where we have used the coefficient values given by \eqref{Eq:coefficients} and \eqref{Eq:tildealpha}.

Note that the $\theta$-corrector for $\alpha_{N,1}\psi_{N,1} + \alpha_{N-1/2,1}\psi_{N-1/2,1}$ is zero as in
the parabolic boundary layer. 

The difference $u^0 - u_L - \psi^0_I$ is again calculated as
\begin{equation}\label{Eq:r}
u^0 - u_B - \psi^0_I = 0, \qquad\text{in $T_{N,1}$}.
\end{equation}



\bigskip
We summarize the properties of our construction in the following proposition: 
\begin{propo}\label{prop:bilinear0}
Let $u_L$ be the bilinear interpolation of $u^0$ in the interior extended to $\bar{\Omega}$ with zero boundary values on
$\partial\Omega$ and let $u_B$ the bubble interpolant given by \eqref{def:Lambda+u_B}, with coefficients defined in \eqref{Eq:coefficients} and \eqref{Eq:tildealpha}. Then
$$
u^0 - u_L - u_B^0 = 0.
$$
\end{propo}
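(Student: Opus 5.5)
The identity is local and polynomial, so I will verify it element by element, using that every $\psi^0_\lambda$ is an explicit piecewise polynomial on $\Omega_\lambda$ and vanishes outside it. I split $\T$ into four disjoint classes: interior elements in ${\mathaccent 23 \Omega}_h$; elements $T_{i,1}$ and $T_{i,N}$, $i\le N-1$, along the parabolic boundaries; elements $T_{N,j}$, $2\le j\le N-1$, along the elliptic boundary; and the two corner elements $T_{N,1}$, $T_{N,N}$. On each class I compute $u^0-u_L$ directly from $u^0=1-x$ and the boundary modification of $u_L$. Then I list which $\psi_\lambda$, $\lambda\in\Lambda$, have support meeting that element and check that the weighted sum of their reduced solutions equals $u^0-u_L$.

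For the interior elements, no $\Omega_\lambda$ with $\lambda\in\Lambda$ meets them, so $u_B^0=0$ there. Since $u^0$ is affine, $u_L=u^0$ exactly. For the parabolic strips, $u^0-u_L=l_b(y)(1-x)$ (resp.\ $l_t(y)(1-x)$) was computed above. Proposition~\ref{Propo:6} gives $u_B^0=l_b(y)(1-x)$ on $h\le x\le1$, $0\le y\le h$, and analogously at the top. It remains to make sure no bubble from $\Lambda_L$ reaches these elements. The patch $\Omega_{N,3/2}$ lies in $x<h$, so this holds.

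For the elliptic strip, the computation of Section~\ref{Sec:elliptic} with the coefficients \eqref{Eq:tildealpha} gives $u_B^0=N(h-x)=1-x/h=u^0-u_L$ on each $T_{N,j}$, $2\le j\le N-1$. Here I must check that the only contributing indices are $(N,j)$ and $(N,j\pm1/2)$. The horizontal-edge patches $\Omega_{N,j\pm1/2}$ are vertical pairs of elements. In $T_{N,j}$, the reduced solution of each is $h-x$, because $l\equiv1$ and the transport is in $-x$ from the inflow side $x=h$. I also have to confirm that the parabolic bubbles $\psi_{N-1/2,1}$, $\psi_{N-1/2,N}$ do not reach $T_{N,j}$ for $2\le j\le N-1$, which is clear from their supports.

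The corner elements are the main obstacle, since three bubbles of different families overlap there and \eqref{Eq:r} was only asserted. On $T_{N,1}=(0,h)^2$ I will write out each reduced solution. First, $\psi^0_{N,1}=l_b(y)(h-x)$. Second, $\psi^0_{N-1/2,1}=l_b(y)(2h-x)$, obtained by integrating from the inflow edge $x=2h$ of the patch. Third, $\psi^0_{N,1/2}$ needs care, because $S_{N,1/2}$ is not an interior edge. I will reinterpret it as the horizontal-edge patch bubble $\psi_{N,3/2}$ with $l\equiv1$, so $\psi^0=h-x$. I will then compute
\[
(2-2N)\,l_b(y)(h-x)+(N-1)\,l_b(y)(2h-x)+N(h-x),
\]
expand it with $N=1/h$, and compare with $u^0-u_L=1-x-\tfrac{1-h}{h^2}xy$, where $u_L$ here is the bilinear function vanishing on $x=0$ and $y=0$ with value $1-h$ at $(h,h)$. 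If the naive index bookkeeping does not match exactly, I expect the fix to be adjusting which patch bubble is counted at the corner, or its coefficient. The polynomial check is mechanical once the supports are pinned down. The top corner $T_{N,N}$ follows by the symmetry $y\mapsto1-y$, which exchanges $l_b$ and $l_t$.
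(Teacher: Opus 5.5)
Your proposal is correct and follows the paper's own route: the proposition simply collects the element-wise checks, namely exactness of $u_L$ in the interior, Proposition~\ref{Propo:6} on the parabolic strips, $N(h-x)=1-x/h$ on the elliptic strip, and \eqref{Eq:r} at the corners, and your corner computation closes exactly with no adjustment needed. Indeed, the two parabolic bubbles give $(N-1)x\,l_b(y)$, and adding $N(h-x)=1-Nx$ yields $1-x-\frac{1-h}{h^2}xy$; this confirms your reading that $\psi_{N,1/2}$ means $\psi_{N,3/2}$ (with $l\equiv 1$), and that $\psi_{N,1}$ must carry $l^T=l_b$ as in \eqref{def:l_b-l_tT} rather than $l\equiv 1$.
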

\subsubsection{Scaling}

One main tool in the subsequent analysis is scaling back and forth
to a reference situation.
Let $T_{i,j}$, $i,j \in\{1,\dots, N\}$ be an element of the mesh. Define
$$
\Phi_{i,j} :\hat{T}:=[0,1]^2 \rightarrow T_{i,j}, 
\qquad
\Phi_{i,j}(\hat{x},\hat{y}) = (x_i + h\hat{x},y_j + h\hat{y})^T,
$$
and associate $(x,y)=\Phi_{i,j}(\hat{x},\hat{y})$.
We also use the notation that for a function $v$ on $T_{i,j}$,
$\hat{v}(\hat{x},\hat{y})=v(x,y)$.

On $\hat{T}$ the element bubbles fulfill the following equation:
\[
-\frac{\epsilon}{h^2} \hat{\Delta} \hat{\psi} - \frac{1}{h} \partial_{\hat{x}}\hat{\psi} = \hat{l}(\hat{y})\quad \text{in }\hat{T},\qquad
   \hat{\psi} = 0\quad\text{on }\partial\hat{T}.
\]
For the patch bubbles we have the same scaling but rather for $\hat{T} = (0,2)\times (0,1)$ or
$\hat{T} = (0,1)\times (0,2)$.

Multiplying the above equation by $h$ we get, 
with $\hat{\epsilon}=\frac{\epsilon}{h}$,
\begin{equation}\label{eq:defHAtPsi}
-\hat{\epsilon} \hat{\Delta} \hat{\psi} - \partial_{\hat{x}}\hat{\psi} = h\hat{l}(\hat{y})\quad \text{in }\hat{T},\qquad
   \hat{\psi} = 0\quad\text{on }\partial\hat{T}.
\end{equation}

In the next section we apply the results of the asymptotic expansion to the various bubble functions
on the reference configuration. There the bubbles fulfill the same equation \eqref{Eq:problem} $\epsilon$ replaced by $\hat{\epsilon}$, and with a different right-hand side $f$.

\subsubsection{Estimating the correctors for the bubble functions}\label{Sec:correctors}

The bubble interpolant $u_B$ is defined in~\eqref{def:Lambda+u_B}, with the coefficients given by ~\eqref{Eq:coefficients} and~\eqref{Eq:tildealpha}.
In this subsection, 
we study the difference between $u_B - \uBas$.
Recalling the definition of \uBas from~\eqref{def:uB_ass},
\[
\uBas = u_B^0 + \underbrace{\theta_B+ \varphi_B +\xi_B + \zeta_B}_{\text{correctors for }u_B},
\]
we derive bounds for each corrector.

First, let us analyze the correctors corresponding to the ordinary boundary layer at $x=0$.
By construction, the correctors $\theta$ and $\theta_B$ associated for $\ueps$ and $u_B$ respectively, cancel each other at the outflow boundary and at the outflow corners, where $\theta$ is active, as mentioned in~\eqref{eq:canceledTheta}.

On the other hand, the sum of the correctors $\theta_\lambda$ vanishes in the bottom and top boundary layers. Indeed, let us consider $T_{i,1}$ for $i=1,\ldots,N-1$, and two of the bubbles defined in this element:
\begin{equation*}
    \alpha_{i,1}\psi_{i,1} + \alpha_{i-1/2,1}\psi_{i-1/2,1}.     
\end{equation*}
By their definition~\eqref{def:theta}, the corresponding correctors $\theta_\lambda$ satisfy
\begin{equation*}
    \alpha_{i,1}\theta_{i,1} + \alpha_{i-1/2,1}\theta_{i-1/2,1} = 
    -(\alpha_{i,1}\psi_{i,1}^0 + \alpha_{i-1/2,1}\psi_{i-1/2,1}^0 ) (x_i,y)\exp(-x/\epsilon).
\end{equation*}
The second statement of Proposition~\ref{Propo:6} ensures that the factor in parentheses is zero.

The following Lemma provides a bound for the remaining correctors of $u_B$ in the $\epsilon-$norm.

\begin{lemma}
\label{Lemma:boundBubbleCorr}
Let $\varphi_B, \xi_B, \zeta_B$ denote the correctors corresponding to $u_B$ given by \eqref{eq:defBubbCorr}.
Then,
$$
\|\varphi_B\|_\epsilon  +  \|\xi_B\|_\epsilon + \|\zeta_B\|_\epsilon  \lesssim  (\epsilon/h)^{1/4}.
$$
\end{lemma}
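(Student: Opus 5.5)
The plan is to work element by element in the boundary-layer strip and scale every bubble corrector to the reference configuration. There \eqref{eq:defHAtPsi} is problem \eqref{Eq:problem} with $\epsilon$ replaced by $\hat\epsilon=\epsilon/h$, right-hand side $h\hat l$, and domain $\hat T$ (unit square or $2\times1$ rectangle). Applying Lemma~\ref{Temam:Lemma2}, Lemma~\ref{Temam:eq35} and Lemma~\ref{Temam:Lemma6} on this reference domain gives, for each single bubble $\psi_\lambda$,
\[
\|\hat\varphi_\lambda\|_{L^2(\hat\Omega_\lambda)}+\hat\epsilon^{1/2}\|\hat\nabla\hat\varphi_\lambda\|_{L^2(\hat\Omega_\lambda)}\lesssim h\,\hat\epsilon^{1/4},
\]
and similarly for $\hat\xi_\lambda,\hat\zeta_\lambda$. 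The factor $h$ comes from the data size $h\hat l$, since $\hat\psi^0_\lambda=O(h)$. These lemmas are stated for $\Omega=(0,1)^2$ and constant data, so I would check that they extend to rectangles of fixed aspect ratio and to the linear data $\hat l(\hat y)$. Their constants depend only on bounds of the data and its derivatives, which are uniform here. This extension is what forces the restriction $\hat\epsilon\le\hat\epsilon_0$.

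Scaling back, $\|v\|_{L^2(T)}=h\|\hat v\|_{L^2(\hat T)}$ and $\epsilon\|\nabla v\|^2_{L^2(T)}=\hat\epsilon\, h\,\|\hat\nabla\hat v\|^2_{L^2(\hat T)}\cdot h^{0}$. Hence a single bubble corrector satisfies $\|\varphi_\lambda\|_{\epsilon,\Omega_\lambda}\lesssim h^{2}\hat\epsilon^{1/4}$, up to a harmless power of $h$ in the gradient part.

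Next I multiply by the coefficients $\alpha_\lambda$ and sum over $\Lambda$, using that each element meets at most three bubbles.

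In the parabolic strips the coefficients grow like $|\alpha_\lambda|\lesssim i\le N=1/h$. However, the relevant object is not each $\alpha_\lambda\varphi_\lambda$ separately. It is the combined corrector of the combination in each element, whose reduced solution by Proposition~\ref{Propo:6} is exactly $l_b(y)(1-x)=O(1)$, with vanishing trace on the inflow edge $S_{i+1/2,1}$. So I will group the terms per element and use linearity of the corrector construction. The corrector in $T_{i,1}$ is then the corrector of data of size $O(1)$, not $O(i)$. Its $\epsilon$-norm squared on $T$ is $\lesssim h^2\hat\epsilon^{1/2}$. Summing over the $N$ elements of the strip gives $\lesssim h\,\hat\epsilon^{1/2}$, which is $\le(\epsilon/h)^{1/2}$.

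In the elliptic strip $\Lambda_L$ the coefficients are $\pm N$ and $\psi^0=h-x$, so the combined reduced solution is $(h-x)/h=O(1)$. The previous subsection already showed that $\varphi_B$ is \est there. The $\xi$ and $\zeta$ parts are handled by the same per-element argument, again summing $N$ elements.

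For the two outflow corners $T_{N,1},T_{N,N}$, there are only finitely many terms of size $O(1)$ after grouping, and they contribute $\lesssim h\hat\epsilon^{1/4}$ in norm.

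Taking square roots, I obtain $\|\varphi_B\|_\epsilon+\|\xi_B\|_\epsilon+\|\zeta_B\|_\epsilon\lesssim\hat\epsilon^{1/4}=(\epsilon/h)^{1/4}$.

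The main obstacle is the grouping step. The individual correctors are defined bubble by bubble on overlapping supports $\Omega_\lambda$, and patch bubbles straddle two elements. One must therefore justify that the sum $\sum\alpha_\lambda\varphi_\lambda$ restricted to an element equals, up to \est terms, the corrector built from the combined reduced solution. The correctors depend linearly on the boundary trace of $\psi^0_\lambda$, and the patch-bubble correctors across the interior edge are \est, as noted for $S_{N,j\pm1/2}$. So I would first try to prove this identity via linearity together with the cancellations in Proposition~\ref{Propo:6}, and only fall back to crude coefficient-by-coefficient bounds if that fails. The crude bounds would lose powers of $N$.
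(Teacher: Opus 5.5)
\textbf{There is a genuine gap in your main route: the grouping step is unproven, and the justification you give for it fails.}

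You justify grouping by saying the patch-bubble correctors are exponentially small across the interior edge. That holds for the vertical patches of the elliptic strip, whose interior edge sits at $\hat y=1$, away from the layers at $\hat y=0,2$. It does not hold for the horizontal patches $T_{i,1}\cup T_{i+1,1}$ of the parabolic strips. There the interior edge is vertical and the $\varphi$-layer along $\hat y=0$ crosses it. The bottom datum is $-h(2-\hat x)(1-\delta(\hat x))$, which is nonzero for $\hat x<3/2$. So $\hat\varphi_\lambda$ is of size $h$ on $\hat x=1$ for $\hat y\lesssim\sqrt{\hat\epsilon}$.

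Consequently, on $T_{i,1}$ the sum $\sum_\lambda\alpha_\lambda\varphi_\lambda$ carries the inflow datum $\alpha_{i-1/2,1}\varphi_{i-1/2,1}$ on $S_{i-1/2,1}$, which is of size $(i-1)h$ in the layer. The cut-offs $\delta,\gamma$ are also defined relative to each support. So this sum is not, up to exponentially small terms, the corrector of $l_b(y)(1-x)$ on $T_{i,1}$. Two smaller corrections: with $\aaa=(-1,0)$, $S_{i+1/2,1}$ is the outflow edge of $T_{i,1}$, not the inflow edge. And what Proposition~\ref{Propo:6} makes vanish there is only the pair $\alpha_{i-1/2,1}\psi^0_{i-1/2,1}+\alpha_{i,1}\psi^0_{i,1}$, not $l_b(y)(1-x)$.

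\textbf{You do not need grouping.} The crude coefficient-by-coefficient bound you dismissed is exactly the paper's proof, and it loses no power of $N$. Lemma~\ref{Temam:BubbleCorrectorsBound} gives
\[
\|\chi_\lambda\|_{\epsilon,\Omega_\lambda}^2\le\epsilon\|\nabla\chi_\lambda\|^2_{\Omega_\lambda}+\tfrac1h\|\chi_\lambda\|^2_{\Omega_\lambda}\lesssim h^3\hat\epsilon^{1/2}.
\]
In norm this is $h^{3/2}\hat\epsilon^{1/4}$, not the $h^{2}\hat\epsilon^{1/4}$ you wrote. The gradient part $\epsilon\|\nabla\chi_\lambda\|^2=h\hat\epsilon\|\hat\nabla\hat\chi_\lambda\|^2$ dominates, and that half power is not harmless: it is precisely what balances the count. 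Use $|\alpha_\lambda|\le 2N=2/h$, at most three bubbles per element, and $O(N)$ elements in the layer strips. Then
\[
\|\chi^B\|_\epsilon^2\lesssim\sum_T\sum_{\lambda\in\Lambda(T)}\alpha_\lambda^2\|\chi_\lambda\|_{\epsilon,T}^2\lesssim\frac{1}{h^2}\,N\,h^3\hat\epsilon^{1/2}=\hat\epsilon^{1/2},
\]
and taking square roots gives the claim.

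Your own observation $\hat\psi^0_\lambda=O(h)$ already shows why grouping gains nothing in order. It gives $|\alpha_\lambda|\,O(h)=O(1)$, so in the worst elements each weighted individual corrector is as large as your grouped one. Correspondingly, your grouped per-element bound should read $h\hat\epsilon^{1/2}$, not $h^2\hat\epsilon^{1/2}$. Summing over the strip then gives $\hat\epsilon^{1/2}$, not $h\hat\epsilon^{1/2}$.
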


\begin{proof}
Let $\chi^B\in\{\varphi_B, \xi_B,\zeta_B \}$ a corrector of $u_B$. Then,
\begin{equation*}
\begin{aligned}
    \|\chi^B\|_\epsilon^2 &= \sum_{T\in\Omega \setminus \mathaccent 23 \Omega} \|\chi^B\|_{\epsilon,T}^2 
    \le
    \sum_{T\in\Omega \setminus \mathaccent 23 \Omega}\sum_{\lambda \in \Lambda(T)}\alpha_\lambda^2\|\chi_\lambda\|_{\epsilon,T}^2
    \\
    &\le \frac{4}{h^2}\sum_{T\in\Omega \setminus \mathaccent 23 \Omega}\sum_{\lambda \in \Lambda(T)}\|\chi_\lambda\|_{\epsilon,T}^2 
    \lesssim \frac{1}{h^2} N  h^3 (\epsilon/h)^{1/2}
    =(\epsilon/h)^{1/2}
    ,
\end{aligned}
\end{equation*}
where the last inequality follows from Lemma~\ref{Temam:BubbleCorrectorsBound}.
\end{proof}

We also need to bound the correctors in the full $\vvvert \cdot\vvvert $ norm. 
This part is rather technical and is thus split into several lemmas.
Recalling from~\eqref{eq:||| norm} that $
\vvvert  v\vvvert   := \|v\|_\epsilon + 
\sup_{0\not= v_h\in V_h} \frac{a(v,v_h)}{\|v_h\|_\epsilon},
$
we observe that the above correctors have been already bounded in the $\|\cdot \|_\epsilon$--norm in Lemma~\ref{Lemma:boundBubbleCorr}. 
We need to bound the additional term for each corrector, and this will be proved in each of the following lemmas.

\begin{lemma}\label{Lemma:10}
Let $\varphi_B$ denote the $\varphi$-corrector corresponding to $u_B$ given by \eqref{eq:defBubbCorr}.
Then,
$$
 a(\varphi_B,v_h) \lesssim (\epsilon/h)^{1/4} \|v_h\|_\epsilon, \qquad \text{for all $v_h \in V_h$}.
$$
\end{lemma}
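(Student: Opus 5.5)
I would follow the pattern of Lemma~\ref{Lemma:5}, treating $\varphi_B$ the way $\varphi$ was treated there. Fix $v_h\in V_h$. Writing out $a(\varphi_B,v_h)$ with $\aaa=(-1,0)$ and weight $e^{x}$, the diffusive part is bounded by Cauchy--Schwarz:
\[
a(\varphi_B,v_h)\lesssim \|\varphi_B\|_\epsilon\|v_h\|_\epsilon+\Big|\intO \partial_x\varphi_B\, v_h e^x\,dx\,dy\Big|.
\]
The first term is already $\lesssim(\epsilon/h)^{1/4}\|v_h\|_\epsilon$ by Lemma~\ref{Lemma:boundBubbleCorr}. So everything reduces to the convective term $\|\partial_x\varphi_B\|\,\|v_h\|$, and I aim to show $\|\partial_x\varphi_B\|\lesssim(\epsilon/h)^{1/4}$.

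I would split $\varphi_B=\summLA\varphi_\lambda$ by location.

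\emph{Elliptic boundary.} On the elements $T_{N,j}$, $2\le j\le N-1$, the construction of Section~\ref{Sec:elliptic} gives $\varphi_B=\est$. These elements therefore contribute an exponentially small amount, which is certainly $\lesssim(\epsilon/h)^{1/4}$.

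\emph{Parabolic boundary and corners.} On the strips $T_{i,1}$, $T_{i,N}$ (including the corners), each $\varphi_\lambda$ is the parabolic corrector of a bubble on its reference configuration~\eqref{eq:defHAtPsi}, with $\hat\epsilon=\epsilon/h$ and right-hand side $h\hat l$. The steps are:
\begin{itemize}
\item Scale the analogue of Lemma~\ref{Temam:Lemma2}, which gives $\|\partial_x\varphi\|\lesssim\epsilon^{1/4}$, to $\hat T$. This gives $\|\partial_{\hat x}\hat\varphi_\lambda\|_{L^2(\hat T)}\lesssim h\,\hat\epsilon^{1/4}$, the factor $h$ coming from the right-hand side $h\hat l$.
\item Scale back. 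Since $\partial_x=h^{-1}\partial_{\hat x}$ and $dx\,dy=h^2\,d\hat x\,d\hat y$, we get $\|\partial_x\varphi_\lambda\|_{L^2(T)}\lesssim h\,\hat\epsilon^{1/4}$.
\item Sum as in Lemma~\ref{Lemma:boundBubbleCorr}, using $|\alpha_\lambda|\lesssim 1/h$, at most three bubbles per element, and $O(N)$ elements:
\[
\|\partial_x\varphi_B\|^2\lesssim \frac{1}{h^2}\,N\,h^2\,\hat\epsilon^{1/2}=\frac{\hat\epsilon^{1/2}}{h}.
\]
\end{itemize}
This crude count is off by a factor $h^{-1/2}$, and closing that gap is the main obstacle.

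To remove the extra factor, I would exploit cancellation rather than the triangle inequality. The parabolic corrector of each bubble depends only on its trace data at the parabolic edge, which is linear in $\psi^0_\lambda$. By Proposition~\ref{Propo:6}, $\summLA[B]\psi_\lambda^0=l_b(y)(1-x)$, which has $O(1)$ size and slope. So I expect $\varphi_B$ to coincide, up to $\est$ terms produced at interior patch edges, with the single corrector of $l_b(y)(1-x)$ on the whole strip $(h,1)\times(0,h)$.

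That single corrector can then be estimated by the same bound as in Lemma~\ref{Temam:Lemma2}, with the parabolic layer of width $\sqrt{\epsilon}$ cut to a strip of height $h$. This should give $\|\partial_x\varphi_B\|\lesssim\epsilon^{1/4}\le(\epsilon/h)^{1/4}$.

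If this superposition argument fails because of the element-wise interior boundaries, the fallback is to integrate by parts in $x$ inside each element, as was done for $\zeta$ in Lemma~\ref{Lemma:5}. Each element then produces the face terms
\[
\varphi_B\, v_h e^x\big|_{S}\quad\text{and}\quad \intO\varphi_B\,\partial_x(v_h e^x)\,dx\,dy .
\]
The volume term is bounded by $\|\varphi_B\|\,\epsilon^{-1/2}\|v_h\|_\epsilon$, and the face terms cancel across edges because of the vanishing traces in the second statement of Proposition~\ref{Propo:6}.
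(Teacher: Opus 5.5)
\textbf{There is a genuine gap.} The step that fails is your reduction to $\|\partial_x\varphi_B\|\lesssim(\epsilon/h)^{1/4}$. This bound is false, and your crude count $\|\partial_x\varphi_B\|^2\lesssim\hat\epsilon^{1/2}/h$ is in fact sharp.

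The superposition picture breaks down for the following reason. Each $\varphi_\lambda$ is a heat-type solution in the ``time'' $x^\lambda_{\max}-x$. It starts from zero at the right edge of its own support $\Omega_\lambda$, with a cutoff $\delta$ scaled to $\Omega_\lambda$, and it is truncated at the left edge, where its trace is nonzero. Proposition~\ref{Propo:6} only makes the traces of the reduced solutions $\psi^0_\lambda$ cancel, i.e.\ the data at $y=0$; that is why the $\theta_\lambda$ cancel. It says nothing about the $y$-profiles of the correctors.

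At $x=x_i$, the combination $(2-2i)\varphi_{i,1}+(i-1)\varphi_{i-1/2,1}$ that disappears consists of correctors that have evolved over lengths $h$ and $2h$. By scaling, their profiles are $h\,\Phi(y/\sqrt{\epsilon h})$ and $2h\,\Phi(y/\sqrt{2\epsilon h})$ with the same $\Phi$. So the combination vanishes at $y=0$ but is of size $ih$ across a layer of width $\sqrt{\epsilon h}$. Hence $\varphi_B$ restarts at every vertical edge. On $T_{i,1}$ it has amplitude $\sim ih$ on a width $\sqrt{\epsilon h}$, so $\partial_x\varphi_B=-\epsilon\partial_{yy}\varphi_B$ is of order $i$ there, and $\sum_i i^2h\sqrt{\epsilon h}\sim\hat\epsilon^{1/2}/h$. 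Consequently, no argument that bounds the convective term through $\|\partial_x\varphi_B\|\,\|v_h\|$ can give the lemma.

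Your fallback fails for related reasons:
\begin{itemize}
\item The face terms do not cancel, because the jumps of $\varphi_B$ across vertical edges are exactly the nonzero combinations described above.
\item The volume term is too large. The bound $\|\varphi_B\|\lesssim h^{1/2}\hat\epsilon^{1/4}$ from Lemma~\ref{Temam:BubbleCorrectorsBound} only yields $\|\varphi_B\|\epsilon^{-1/2}\|v_h\|_\epsilon\lesssim\hat\epsilon^{-1/4}\|v_h\|_\epsilon$.
\end{itemize}

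\textbf{The missing idea.} The convective term should not be estimated on its own. It should be cancelled, bubble by bubble, using the equation $-\epsilon\partial_{yy}\varphi_\lambda-\partial_x\varphi_\lambda=0$ that defines the parabolic corrector on $\Omega_\lambda$. Split $\epsilon\int_{\Omega_\lambda}\nabla\varphi_\lambda\cdot\nabla(v_he^x)$ into its $y$- and $x$-parts, and integrate only the $y$-part by parts in $y$. The boundary terms are harmless: $v_h=0$ at $y=0$, and $\partial_y\varphi_\lambda$ is exponentially small at the top of $\Omega_\lambda$. Since nothing is integrated by parts in $x$, the nonzero traces of $\varphi_\lambda$ on vertical edges never enter.

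The $y$-part then cancels $-\int_{\Omega_\lambda}\partial_x\varphi_\lambda v_he^x$ exactly. What remains is $\epsilon\int_{\Omega_\lambda}\partial_x\varphi_\lambda\,\partial_x(v_he^x)$, which carries the factor $\sqrt\epsilon$ that your convective term lacks. By Cauchy--Schwarz it is at most $\sqrt\epsilon\|\partial_x\varphi_\lambda\|_{\Omega_\lambda}\,\sqrt\epsilon\,(\|\partial_xv_h\|_{\Omega_\lambda}+\|v_h\|_{\Omega_\lambda})$. Summing with $|\alpha_\lambda|\lesssim h^{-1}$ and using $\sum_\lambda\alpha_\lambda^2\epsilon\|\nabla\varphi_\lambda\|_{\Omega_\lambda}^2\lesssim\hat\epsilon^{1/2}$, from the proof of Lemma~\ref{Lemma:boundBubbleCorr}, gives $\hat\epsilon^{1/4}\|v_h\|_\epsilon$. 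Your treatment of the elliptic boundary, where $\varphi_B$ is exponentially small by construction, agrees with the paper.
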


\begin{proof}
Recall that
\[\varphi_B = \summLA[B]\varphi_\lambda+ \summLA[T]\varphi_\lambda+
\summLA[L]\varphi_\lambda,\]
and let us focus first on the integrals corresponding to the indices in $\Lambda_B$. 
Let $v_h\in V_h$, then
\begin{equation*}
    \begin{aligned}
    a(\summLA[B] \varphi_\lambda,v_h) 
    &= \summLA[B] a(\varphi_\lambda,v_h)\\
    &=  \summLA[B]\left( \epsilon \int_{\Omega_\lambda} \nabla\varphi_\lambda \nabla (v_h e^x)
        - \int_{\Omega_\lambda} \partial_x \varphi_\lambda v_h e^x \right).
    \end{aligned}
\end{equation*}

Let $\lambda\in\Lambda_B$ and $\Omega_\lambda:=\supp\{\varphi_\lambda\}$. It follows that
\begin{equation*}
    \begin{aligned}
     \big( \epsilon &\int_{\Omega_\lambda}\nabla\varphi_\lambda \nabla (v_h e^x)
        - \int_{\Omega_\lambda}\partial_x \varphi_\lambda v_h e^x \big) 
        = \big( \epsilon \int_{\hat{\Omega}_\lambda}\hat{\nabla}\hat{\varphi}_\lambda\hat{\nabla}(\hat{v}_h e^x)
              - h \int_{\hat{\Omega}_\lambda} \partial_{\hat{x}} \hat{\varphi}_\lambda \hat{v}_h e^x   \big) \\
     &=  h(\hat{\epsilon} \int_{\hat{\Omega}_\lambda}\partial_{\hat{y}} \hat{\varphi}_\lambda\partial_{\hat{y}}\hat{v}_h e^x
              -  \int_{\hat{\Omega}_\lambda} \partial_{\hat{x}} \hat{\varphi}_\lambda \hat{v}_h e^x) + \epsilon \int_{\hat{\Omega}_\lambda}
                  \partial_{\hat{x}} \hat{\varphi}_\lambda\partial_{\hat{x}}(\hat{v}_he^x) \\
     &=  h\Big(\underbrace{\hat{\epsilon} \int_{\hat{\Omega}_\lambda}-\partial_{\hat{y}\hat{y}} \hat{\varphi}_\lambda\hat{v}_h e^x
              -  \int_{\hat{\Omega}_\lambda} \partial_{\hat{x}} \hat{\varphi}_\lambda \hat{v}_h e^x}_{=0}\Big) + \epsilon \int_{\hat{\Omega}_\lambda}
               \partial_{\hat{x}} \hat{\varphi}_\lambda\partial_{\hat{x}}(\hat{v}_he^x)  + \text{e.s.t}\int_{\hat{\Omega}_\lambda}\hat{v}_h.
    \end{aligned}
\end{equation*}
In the last step, we used the definition of $\varphi_\lambda$, together with the fact that $\partial_{\hat{y}}\hat{\varphi}_\lambda|_{\hat{y}=1} =$ \est and $\hat{v}_h|_{\hat{y}=0}=0$.

Moreover, 
\begin{equation*}
    \begin{aligned}
\epsilon \int_{\hat{\Omega}_\lambda} \partial_{\hat{x}} \hat{\varphi}_\lambda\partial_{\hat{x}}(\hat{v}_he^x) 
& \lesssim
     \epsilon \|\partial_{\hat{x}}\hat{\varphi}_\lambda\|_{\hat{\Omega}_\lambda} \|\partial_{\hat{x}} (\hat{v}_he^x) \|_{\hat{\Omega}_\lambda}\\
     & \lesssim
    \epsilon \|\partial_x \varphi_\lambda\|_{\Omega_\lambda}\|\partial_x (v_he^x) \|_{\Omega_\lambda}\\
     & \lesssim
     \epsilon \|\partial_x \varphi_\lambda\|_{\Omega_\lambda}(\|\partial_x v_h) \|_{\Omega_\lambda}+ \|v_h \|_T)\\
      &=  \sqrt{\epsilon} \|\partial_x \varphi_\lambda\|_{\Omega_\lambda}\sqrt{\epsilon}( \|\partial_x v_h \|_{\Omega_\lambda}+
    \|v_h \|_T).
    \end{aligned}
\end{equation*}

Therefore, summing up all elements and using Lemma~\ref{Lemma:boundBubbleCorr}, we obtain
\begin{equation*}
 \begin{aligned}
a(\summLA[B]\varphi_\lambda,v_h) &\lesssim 
    \summLA[B]\left(\sqrt{\epsilon} \|\partial_x \varphi_\lambda\|_{\Omega_\lambda}\sqrt{\epsilon}( \|\partial_x v_h \|_{\Omega_\lambda}+
        \|v_h \|_{\Omega_\lambda})
              + \text{e.s.t}\int_{\hat{\Omega}_\lambda}\hat{v}_h\right)\\
    &\lesssim \left(
    \sum_{\lambda\in\Lambda_{B}}\alpha_\lambda^2\epsilon
    \|\partial_x \varphi_\lambda\|_{\Omega_\lambda}^2
    \right)^{1/2}\|v_h\|_\epsilon 
    + 
    \text{e.s.t} \, \| v_h \| 
    \\
    & \le \hat{\epsilon}^{1/4}\|v_h\|_{\epsilon}.  
 \end{aligned}
\end{equation*}
We can obtain an analogous bound for $a(\summLA[T]\varphi_\lambda,v_h)$, whence
\begin{equation*}
 \begin{aligned}
a(\summLA[B]\varphi_\lambda+\summLA[T]\varphi_\lambda,v_h) 
&\lesssim 
     \hat{\epsilon}^{1/4}\|v_h\|_{\epsilon}.  
 \end{aligned}
\end{equation*}
The $\varphi$--correctors at the outflow boundary are \est\ by construction, see Section \ref{Sec:elliptic}; and the assertion of the lemma thus follows.
\end{proof}

\begin{lemma}\label{Lemma:11}
Let $\xi_B$ denote the $\xi$-corrector corresponding to $u_B$ given by \eqref{eq:defBubbCorr}.
Then,
$$
 a(\xi_B,v_h) \lesssim (\epsilon/h)^{1/4} \|v_h\|_\epsilon, \qquad \text{for all $v_h \in V_h$}.
$$
\end{lemma}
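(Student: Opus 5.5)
The plan follows the argument for $\xi$ in the proof of Lemma~\ref{Lemma:5}: each bubble corrector $\xi_\lambda$ solves the homogeneous adjoint-free equation on its own support, so the volume part of $a(\xi_\lambda,v_h)$ vanishes. The new feature is that $\xi_\lambda$ lives on the patch $\Omega_\lambda$, not on $\Omega$. A discrete $v_h$ need not vanish on $\partial\Omega_\lambda$, and $\xi_\lambda$ is not a global $H^1_0$ function with zero normal derivative. So after integrating by parts on each $\Omega_\lambda$, boundary terms survive, and these must be estimated.

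Concretely, I will write $a(\xi_B,v_h)=\summLA a(\xi_\lambda,v_h)$ and, for each $\lambda$, integrate by parts on $\Omega_\lambda$. The weight $e^{x}$ enters exactly as in the proof of Lemma~\ref{Lemma:Coercivity}. Using $-\epsilon\Delta\xi_\lambda-\partial_x\xi_\lambda=0$ in $\Omega_\lambda$ (the scaled version of \cite[Eq.~34]{GieEtAl:13} with $\hat\epsilon$) this gives
\[
a(\xi_\lambda,v_h)=\epsilon\int_{\partial\Omega_\lambda}\partial_n\xi_\lambda\, v_h e^x\,ds .
\]
The elliptic-type corrector $\xi_\lambda$ is concentrated near the outflow corner of $\Omega_\lambda$ in a region of size $\sqrt{\epsilon}\times\epsilon$. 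Away from its own outflow edge (the left edge of $\Omega_\lambda$), its normal derivative is \est. On that left edge, $v_h$ either vanishes (when the edge lies on $\partial\Omega$) or is an interior trace.

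The main obstacle is this boundary term on interior left edges of the patches in $\Lambda_B\cup\Lambda_T$. I would first try to show that these contributions cancel between neighbouring bubbles. The same mechanism as in Proposition~\ref{Propo:6} applies: the combination $\alpha_{i-1/2,1}\psi^0_{i-1/2,1}+\alpha_{i,1}\psi^0_{i,1}$ vanishes on $S_{i+1/2,1}$, and $\xi_\lambda$ is driven by the boundary data of $\psi^0_\lambda+\varphi_\lambda$ at the outflow edge. Hence the weighted sum of the $\xi$-data on each interior edge vanishes, and the corresponding traces cancel up to \est.

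If exact cancellation fails, the fallback is a direct bound. In reference variables, a trace inequality gives $\epsilon\|\partial_n\xi_\lambda\|_{L^1(S)}\lesssim h\,\hat\epsilon^{3/4}$ (from Lemma~\ref{Temam:eq35}, scaled). Combined with $\|v_h\|_{L^\infty(S)}$ and an inverse estimate $\|v_h\|_{L^\infty(T)}\lesssim h^{-1}\|v_h\|_T$, and with $|\alpha_\lambda|\lesssim 1/h$, Cauchy--Schwarz over the $O(N)$ layer elements should give the total $\lesssim \hat\epsilon^{1/4}\|v_h\|_\epsilon$. This is the bookkeeping already used in Lemma~\ref{Lemma:boundBubbleCorr} and Lemma~\ref{Lemma:10}.

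Finally, I will treat $\Lambda_L$ and the corner patches. There the outflow edge of each bubble lies on $\{x=0\}$, where $v_h=0$, so only \est\ terms remain. Collecting the three index sets then yields $a(\xi_B,v_h)\lesssim(\epsilon/h)^{1/4}\|v_h\|_\epsilon$.
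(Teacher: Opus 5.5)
Your identity $a(\xi_\lambda,v_h)=\epsilon\int_{\partial\Omega_\lambda}\partial_n\xi_\lambda\,v_h e^x\,ds$ is correct. The proposal has a genuine gap in what you then say about $\xi_\lambda$. What you describe, a $\sqrt{\epsilon}\times\epsilon$ object at the outflow corner, is $\zeta_\lambda$, not $\xi_\lambda$.

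By Appendix~\ref{Appendix:Temam}, $\xi_\lambda$ is the elliptic corner corrector at the \emph{inflow} corner $(\hat x_{\max},0)$. It solves the full equation with data $\hat\xi_\lambda(\hat x,0)=h(\hat x-\hat x_{\max})\delta(\hat x)$ on the bottom edge, which is of size $h$ on a fixed portion of that edge, and with zero data on the left and right edges. Accordingly, the bound of Lemma~\ref{Temam:Ap} (with $X$ measured from the inflow edge) allows $|\hat\xi_\lambda|\sim h$ on the whole parabolic strip $\hat y\lesssim(\hat\epsilon(\hat x_{\max}-\hat x))^{1/2}$, down to the outflow edge, and with this data $\xi_\lambda$ is indeed that large. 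Hence $\partial_n\xi_\lambda$ is not e.s.t.\ in two places:
on the horizontal edge carrying the data, where $|\partial_{\hat y}\hat\xi_\lambda|\sim h\hat\epsilon^{-1/2}$;
and on the outflow edge, where $\xi_\lambda=0$ is imposed against $O(h)$ values transported downstream.

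For $\Lambda_B\cup\Lambda_T$ the first point is harmless, since the data edge lies on $\partial\Omega$. Two of your steps break, however.

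The cancellation fails. Proposition~\ref{Propo:6} is about traces of $\psi^0_\lambda$ on outflow edges, which is what drives (and cancels) the $\theta_\lambda$. The outflow trace of $\epsilon\,\partial_x\xi_\lambda$ is driven instead by the bottom data above. On $S_{i+1/2,1}$ the contributions of $\psi_{i,1}$ and $\psi_{i-1/2,1}$ (weights $-2(i-1)$ and $i-1$) come from different data placed at different distances upstream. Nothing forces them to cancel, and the weighted sum is generically of size $(i-1)h$ in the strip.

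The $\Lambda_L$ step also fails as argued. There the horizontal edges of $\Omega_\lambda$ are interior mesh lines where $v_h\neq0$ and where $\xi_\lambda$ has its data. So ``only e.s.t.\ terms remain'' does not follow from $v_h=0$ on $\{x=0\}$. One might exploit that an element bubble and the adjacent patch bubble carry the same corner data with opposite weights in \eqref{Eq:tildealpha}, but that argument is yours to supply.

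The fallback does not close either, for three reasons. First, Lemma~\ref{Temam:eq35} bounds only $\|\xi\|_\epsilon$ and gives no control of edge traces of $\partial_n\xi_\lambda$. Second, an $\hat\epsilon$-uniform inverse estimate $\|v_h\|_{L^\infty(T)}\lesssim h^{-1}\|v_h\|_T$ is false on $V_h$. On a bottom element, $\hat b_1-\hat b_{\hat T}$ solves the homogeneous equation on $\hat T$ with nonzero data $\hat b_1(0,\cdot)=O(h)$ only on the outflow edge. It is therefore an $\hat\epsilon$-layer, of size $h$ pointwise but $O(h\hat\epsilon^{1/2})$ in $L^2(\hat T)$; this is exactly the degeneracy Appendix~\ref{Appendix:aux} has to handle. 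Third, even granting both inputs, your exponents give $h^{-1}\cdot h\hat\epsilon^{3/4}\cdot h^{-1}\|v_h\|_T$ per element. After Cauchy--Schwarz over the $N$ layer elements this is $h^{-3/2}\hat\epsilon^{3/4}\|v_h\|$, which is $\lesssim\hat\epsilon^{1/4}\|v_h\|_\epsilon$ only when $\epsilon\lesssim h^4$.

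The paper sidesteps all of this by never creating normal derivatives. It bounds the diffusion part by Cauchy--Schwarz and Lemma~\ref{Lemma:boundBubbleCorr}, and integrates by parts in $x$ only the convective part. The boundary terms are then $\xi_\lambda v_h e^x$ on the left and right edges, which vanish because $\xi_\lambda=0$ there. The volume terms $\int_{\Omega_\lambda}\xi_\lambda\,\partial_x(v_he^x)$ are handled by an $L^2$ bound on $\hat\xi_\lambda$ together with $|\alpha_\lambda|\le 2N$ and $\#\Lambda_B\lesssim N$.

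If you keep your identity instead, the outflow-edge term has the same structure as the $\Gamma_\lambda^{\text{left}}$ term for $\zeta_B$ in Lemma~\ref{Lemma:12}. It would have to be treated the same way, via Hardy in $\hat y$ and Propositions~\ref{propo: bh_inf_bound} and~\ref{Propo:AngleSpace2}. That requires a pointwise bound on $\hat\epsilon\,\partial_{\hat x}\hat\xi_\lambda$ along that edge, which Appendix~\ref{Appendix:Temam} does not provide.

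A caution on the paper's route: its volume step uses \eqref{Eq:Xi}, $\|\hat\xi_\lambda\|\lesssim h\hat\epsilon$. Integrating the pointwise bound of Lemma~\ref{Temam:Ap} over the strip above gives only $h\hat\epsilon^{1/4}$, in line with Lemma~\ref{Temam:BubbleCorrectorsBound}. With $h\hat\epsilon^{1/4}$, the same counting yields $\hat\epsilon^{-1/4}\|v_h\|_\epsilon$ rather than $\hat\epsilon^{1/2}\|v_h\|_\epsilon$. So that $L^2$ estimate needs its own justification as well.
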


\begin{proof}
Recall that
\[\xi_B = \summLA[B]\xi_\lambda+ \summLA[T]\xi_\lambda+
\summLA[L]\xi_\lambda,\]
and let us focus first on the integrals corresponding to the indices in $\Lambda_B$. 
Let $v_h\in V_h$, then
\begin{equation*}
\begin{aligned}
a\Big(\summLA[B]\xi_\lambda,v_h\Big) &=
        \summLA[B]
        \left( \epsilon \int_{\Omega_\lambda} \nabla\xi_\lambda \nabla (v_h e^x)
        - \int_{\Omega_\lambda} \partial_x \xi_\lambda v_h e^x \right)\\
        &\le
        \summLA[B]
        \left( \sqrt{\epsilon} \| \nabla\xi_\lambda\|_{\Omega_\lambda} \sqrt{\epsilon}\|\nabla (v_h e^x)\|_{\Omega_\lambda}
        - \int_{\Omega_\lambda} \partial_x \xi_\lambda v_h e^x \right)\\
        &\le
        \|\xi_B\|_\epsilon
    \|v_h\|_\epsilon- \summLA[B]\int_{\Omega_\lambda} \partial_x \xi_\lambda v_h e^x .
\end{aligned}
\end{equation*}

The first term is bounded as required by Lemma~\ref{Lemma:boundBubbleCorr}. We now turn our attention to the second term. Integrating by parts in $x$, we obtain:
$$
  \int_{\Omega_\lambda} \partial_x \xi_\lambda v_h e^x =   -\int_{\Omega_\lambda} \xi_\lambda \partial_x v_h e^x - \int_{\Omega_\lambda} \xi_\lambda  v_h e^x,
$$
where the boundary terms vanish because $\xi=0$ on the left and right sides of $\Omega_\lambda$.
The first term on the right-hand side is treated as follows:
$$
\begin{aligned}
   \Big|\int_{\Omega_\lambda} \xi_\lambda \partial_x v_h e^x \Big| 
   &= h \Big|
   \int_{\hat{\Omega}_\lambda} \hat{\xi}_\lambda \partial_{\hat{x}} \hat{v}_h e^x\Big|
   \lesssim
     h \|\hat{\xi}_\lambda\|_{\hat{\Omega}_\lambda} \| \partial_{\hat{x}} \hat{v}_h \|_{\hat{\Omega}_\lambda} \\
   &\lesssim  h^2 \hat{\epsilon} \|\partial_{\hat{x}} \hat{v}_h \|_{\hat{\Omega}_\lambda} 
   =
   h\epsilon \, \|\partial_x v_h \|_{\Omega_\lambda},
\end{aligned}
$$
where, in the last inequality, we have used Lemma \ref{Temam:Ap}.
Similarly,
$$
\begin{aligned}
   \Big|\int_{\Omega_\lambda} \xi_\lambda  v_h e^x  \Big|&= h^2 \Big|\int_{\hat{\Omega}_\lambda} \hat{\xi}_\lambda  \hat{v}_h e^x\Big|\lesssim
    h^2 \|\hat{\xi}_\lambda\|_{\hat{\Omega}_\lambda} \, \| \hat{v}_h \|_{\hat{\Omega}_\lambda}
  \lesssim  h^3 \hat{\epsilon} \|\hat{v}_h \|_{\hat{\Omega}_\lambda} 
  =
   h\epsilon \, \| v_h \|_{\Omega_\lambda}.
\end{aligned}
$$

Then, since $|\alpha_\lambda |\le 2N$, $\#\Lambda_B \lesssim N$ and $N = 1/h$,
$$
\begin{aligned}
   \summLA[B]\int_{\Omega_\lambda} |\partial_x \xi_\lambda v_h e^x|
    & \lesssim 
    \summLA[B] \, h \, \epsilon \, \big(\|\partial_x v_h \|_{\Omega_\lambda}  
           +\| v_h \|_{\Omega_\lambda}\big) \\   
   &\lesssim 
   \Big(\summL[B] \epsilon\Big)^{1/2} \Big(\summL[B]\epsilon \|\partial_x v_h \|_{\Omega_\lambda}^2 + \| v_h \|_{\Omega_\lambda}^2\Big)^{1/2}\\
   &\lesssim
   \Big(\frac\epsilon h\Big)^{1/2} \, \|v_h\|_\epsilon.
\end{aligned}
$$

Analogously we can obtain a similar bound for $a\Big(\summLA[T]\xi_\lambda,v_h\Big) $ and  also for $a\Big(\summLA[L]\xi_\lambda,v_h\Big) $, whence the assertion of the lemma follows.
\end{proof}

\begin{lemma}\label{Lemma:12}
Let $\zeta_B$ denote the $\zeta$-corrector corresponding to $u_B$ given by \eqref{eq:defBubbCorr}. There is $\hat{\epsilon}_0, 0 < \hat{\epsilon}_0 < 1$
such that
$$
 a(\zeta_B,v_h) \lesssim (\epsilon/h)^{1/4} \|v_h\|_\epsilon, \qquad \text{for all $v_h \in V_h$}
$$
for all $0<\epsilon/h <\hat{\epsilon}_0$.
\end{lemma}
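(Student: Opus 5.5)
I would follow the template of Lemmas~\ref{Lemma:10} and~\ref{Lemma:11}. The first step is to write
\[
a(\zeta_B,v_h)=\summLA \Big(\epsilon\int_{\Omega_\lambda}\nabla\zeta_\lambda\cdot\nabla(v_he^x)-\int_{\Omega_\lambda}\partial_x\zeta_\lambda\, v_he^x\Big).
\]
The diffusive part is bounded immediately by $\|\zeta_B\|_\epsilon\|v_h\|_\epsilon\lesssim(\epsilon/h)^{1/4}\|v_h\|_\epsilon$, using Cauchy--Schwarz and Lemma~\ref{Lemma:boundBubbleCorr}. Since each $\zeta_\lambda$ lives only near the outflow corners of $\Omega_\lambda$ (the corners at its left edge, where the elliptic and parabolic layers meet), only the advective term requires work. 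I would treat each $\lambda$ separately in the reference configuration $\hat\Omega_\lambda$, where the bubble solves \eqref{eq:defHAtPsi} with $\hat\epsilon=\epsilon/h$.

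For the advective term I would copy the argument for $\zeta$ in Lemma~\ref{Lemma:5}. Let $\hat x_\lambda$ denote the distance to the left (outflow) edge of $\hat\Omega_\lambda$. Then I would write
\[
\int_{\hat\Omega_\lambda}\partial_{\hat x}\hat\zeta_\lambda\,\hat v_h e^x=\int_{\hat\Omega_\lambda}\hat x_\lambda\partial_{\hat x}\hat\zeta_\lambda\,\frac{\hat v_he^x}{\hat x_\lambda}.
\]
Hardy's inequality applies here because $v_h$ is continuous and vanishes on the left edge of $\Omega_\lambda$ only when that edge lies on $\partial\Omega$. The scaled Lemma~\ref{Temam:Lemma6} (via the bubble version, Lemma~\ref{Temam:BubbleCorrectorsBound}, or its analogue for $\|x\partial_x\zeta\|$) gives $\|\hat x\partial_{\hat x}\hat\zeta_\lambda\|\lesssim \hat\epsilon^{3/4}h$, up to the right-hand-side scale. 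This is where I expect the restriction $\hat\epsilon<\hat\epsilon_0$ to enter: the estimates from \cite{GieEtAl:13} hold only for a small diffusion parameter, so they can be applied on the reference patch only when $\epsilon/h$ is small.

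The main obstacle is that the bubbles are indexed over interior positions. For $\lambda\in\Lambda_B\cup\Lambda_T$ with left edge in the interior, $v_h$ does not vanish there, so Hardy fails. For these I would instead exploit cancellation, as was done for $\theta$: the combination $\alpha_{i,1}\psi^0_{i,1}+\alpha_{i-1/2,1}\psi^0_{i-1/2,1}$ vanishes on the edge $S_{i+1/2,1}$ by Proposition~\ref{Propo:6}. The corner correctors $\zeta_\lambda$ are built from the boundary data $\psi^0_\lambda$ at the outflow edge, so they should cancel in the same way, leaving \est terms. The remaining genuine corners are then those at $x=0$, namely $T_{N,1}$ and $T_{N,N}$ and the elliptic-boundary bubbles $\Lambda_L$, where $v_h=0$ on $x=0$ and Hardy applies.

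Finally I would sum over $\lambda$. Using $|\alpha_\lambda|\lesssim N$ and $\#\Lambda_L\lesssim N$, together with Cauchy--Schwarz, the bound becomes $\hat\epsilon^{1/4}\cdot\hat\epsilon^{1/2}\,\|\nabla(v_he^x)\|$, with the $h$-powers cancelling through the scaling, exactly as in Lemma~\ref{Lemma:5}. Writing $\epsilon^{1/2}\|\nabla(v_he^x)\|\lesssim\|v_h\|_\epsilon$ absorbs the leftover factors and gives $(\epsilon/h)^{1/4}\|v_h\|_\epsilon$. The delicate bookkeeping is making sure the power of $h$ from scaling $\hat x_\lambda$ back to $x$ exactly matches the needed $\epsilon^{1/2}$. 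If it does not, I would instead bound the term crudely by $\|\partial_x\zeta_\lambda\|\,\|v_h\|$ and use a sharper $L^2$ estimate of $\partial_x\zeta_\lambda$ restricted to the corner region of width $\hat\epsilon$.
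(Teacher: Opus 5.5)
There is a genuine gap, at exactly the step you identify as the main obstacle. For $\lambda\in\Lambda_B\cup\Lambda_T$ with interior outflow edge $x=x_i$, you assert that $\alpha_{i,1}\zeta_{i,1}+\alpha_{i-1/2,1}\zeta_{i-1/2,1}$ cancels just as the $\theta$-correctors do. But $\zeta_\lambda$ is not built from $\psi^0_\lambda$. By definition $\zeta_\lambda(x,y)=-\varphi_\lambda(x_i,y)\,e^{-(x-x_i)/\epsilon}$, i.e.\ it is built from the trace of the \emph{parabolic} corrector on the outflow edge. Proposition~\ref{Propo:6} only gives cancellation of the $\psi^0$'s, and hence of these traces only at $y=0$.

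Away from $y=0$ the two parabolic layers differ: the patch bubble's layer has developed over a distance $2h$, the element bubble's over $h$. Parabolic scaling of $\partial_s\varphi=\epsilon\partial_y^2\varphi$ ($s$ the distance from the inflow edge) gives $\varphi_{i-1/2,1}(x_i,y)=2\varphi_{i,1}(x_i,y/\sqrt2)$ when the cut-off in the data is scaled with the domain. Hence on $S_{i+1/2,1}$
\[
\alpha_{i-1/2,1}\varphi_{i-1/2,1}+\alpha_{i,1}\varphi_{i,1}
=2\alpha_{i-1/2,1}\big[\varphi_{i,1}(x_i,y/\sqrt2)-\varphi_{i,1}(x_i,y)\big],
\]
which is of size $(i-1)h$, i.e.\ $O(1)$ for most $i$, across the whole layer $y\lesssim\sqrt{\epsilon h}$. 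So $\zeta_B$ is $O(1)$ on an $\epsilon\times\sqrt{\epsilon h}$ box at each of the $\sim N$ interior edges, and it is not exponentially small. Hardy in $x$ is unavailable there because $v_h\neq 0$ on that edge. Your fallback $|\alpha_\lambda|\,\|\partial_x\zeta_\lambda\|\,\|v_h\|$ costs a factor $\hat\epsilon^{-1/4}$ per edge, which is not small even before summation. Your Hardy-in-$x$ argument is fine for $\Lambda_L$ and for bubbles whose left edge lies on $x=0$; the powers of $h$ do match there. But that is only the easy part.

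What is missing is the paper's device for these interior corners. First integrate by parts in $\hat x$ on $\hat\Omega_\lambda$. The volume term $h\int_{\hat\Omega_\lambda}\hat\zeta_\lambda\,\partial_{\hat x}(\hat v_h e^x)$ then only needs $\|\hat\zeta_\lambda\|\lesssim h\hat\epsilon^{3/4}$, and what remains is the trace term $h\int_{\Gamma_\lambda^{\mathrm{left}}}\hat\zeta_\lambda\hat v_h e^x$. Since $v_h$ vanishes on $y=0$ rather than on the left edge, the weight goes in $\hat y$: $\|\hat y\hat\zeta_\lambda\|_{\Gamma_\lambda^{\mathrm{left}}}\lesssim h\hat\epsilon^{3/4}$ by Lemma~\ref{Temam:zetaBubble}, and Hardy's inequality in $\hat y$ controls $\|\hat v_h/\hat y\|_{\Gamma_\lambda^{\mathrm{left}}}$.

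For the bilinear part $q_h$, Hardy plus norm equivalence gives $\|\nabla q_h\|_{\Omega_\lambda}$. The bubble part $b_h$ has its own $\sqrt{\hat\epsilon}$-layer in $\hat y$, so it needs the lower bound $h^2(1+|\alpha|+|\beta|)^2\hat\epsilon^{-1/2}\lesssim\|\hat\nabla\hat b_h\|^2$ of Proposition~\ref{propo: bh_inf_bound}. One then uses the strengthened Cauchy--Schwarz inequality behind Proposition~\ref{Propo:AngleSpace2} to recombine $\|\nabla q_h\|_T+\|\nabla b_h\|_T\lesssim\|\nabla v_h\|_T$. This last step is where the restriction $\epsilon/h<\hat\epsilon_0$ comes from, not the range of validity of the Gie--Jung--Temam estimates.
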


\begin{proof}
Recall that
\[\zeta_B = \summLA[B]\zeta_\lambda+ \summLA[T]\zeta_\lambda+
\summLA[L]\zeta_\lambda,\]
and let us focus first on the integrals corresponding to the indices in $\Lambda_B$. 
Let $v_h\in V_h$, then, analogously as before,
\begin{equation}\label{Eq:79}
a\big(\summLA[B]\zeta_\lambda,v_h\big)  \lesssim 
\big\|\zeta_B\big\|_\epsilon \big\|v_h\big\|_\epsilon + \summLA[B] \Big|\int_{\Omega_\lambda} \partial_x \zeta_\lambda v_h e^x\Big|.
\end{equation}
Like above, by scaling and integration by parts we get
\begin{equation}\label{eq:xi_boundary}
\begin{aligned}
   \int_{\Omega_\lambda} \partial_x \zeta_\lambda v_h e^x 
  &= -h\int_{\hat{\Omega}_\lambda} \hat{\zeta}_\lambda \partial_{\hat{x}}(\hat{v}_h e^x)
  - h\int_{\Gammaleft} \hat{\zeta}_\lambda \hat{v}_h e^x  + h\int_{\Gammaright} \text{\est } \hat{v}_h,
\end{aligned}
\end{equation}
where we consider $\Gammaleft$ and $\Gammaright$, the left and right boundary of the domain $\hat{\Omega}_\lambda$. That is,
\begin{equation*}
 \Gammaleft := \left\{ (\hat{x}, \hat{y}) \in \partial \hat{\Omega}_\lambda \,:\, \hat{x} = x_{\min} \right\}\text{\quad and \quad}
 \Gammaright := \left\{ (\hat{x}, \hat{y}) \in \partial \hat{\Omega}_\lambda \,:\, \hat{x} = x_{\max} \right\},
\end{equation*}
where $x_{\min}$ and $x_{\max}$ denotes the minimal and maximal $\hat{x}$-coordinate of $\hat{\Omega}_\lambda$, correspondingly.

Since $\partial_{\hat{x}} e^x = h e^x$, the first term of the last line of~\eqref{eq:xi_boundary} can be estimated as
\begin{equation}\label{Eq:77}
\begin{aligned}
\Big|h\int_{\hat{\Omega}_\lambda} \hat{\zeta}_\lambda \partial_{\hat{x}}(\hat{v}_h e^x)\Big|
 &\leq \Big|h\int_{{\hat{\Omega}_\lambda}} \hat{\zeta}_\lambda \partial_{\hat{x}}(\hat{v}_h) e^x\Big|
+ \Big|h^2\int_{\hat{\Omega}_\lambda} \hat{\zeta}_\lambda \hat{v}_h e^x\Big|
\\
&\le \|\hat{\zeta}_\lambda\|_{\hat{\Omega}_\lambda}
\big( h\big\|\partial_{\hat{x}} \hat{v}_h\big\|_{\hat{\Omega}_\lambda} +h^2 \big\| \hat{v}_h\big\|_{\hat{\Omega}_\lambda}\big)
\\
&\lesssim h \, \hat{\epsilon}^{3/4} \, \big(h\big\|\partial_{\hat{x}} \hat{v}_h\big\|_{\hat{\Omega}_\lambda} +h^2 \big\| \hat{v}_h\big\|_{\hat{\Omega}_\lambda}\big)
\\
&= h^2\hat{\epsilon}^{3/4} \big(\big\|\partial_x v_h\big\|_{\Omega_\lambda} +\big\|v_h\big\|_{\Omega_\lambda}\big),
\end{aligned}
\end{equation}
where in the last inequality, we have used Lemma~\ref{Temam:zetaBubble}.

The integral over \Gammaleft in~\eqref{eq:xi_boundary}   can be handled as follows:
Let ${v_h = q_h + b_h}$ with $q_h \in V_L$ being piecewise bilinear and $b_h \in V_B$  a bubble function.

Using Lemma~\ref{Temam:zetaBubble}, we obtain:
\begin{equation*}
\begin{aligned}
\Big|h\int_{\Gammaleft} \hat{\zeta}_\lambda \hat{v}_h e^x\Big| &= 
    \Big| h\int_{\Gammaleft} \hat{y} \hat{\zeta}_\lambda e^x \frac{\hat{v}_h}{\hat{y}} \Big|
  \lesssim h^2\hat{\epsilon}^{3/4} \bigg\| \frac{\hat{v}_h}{\hat{y}}\bigg\|_{\Gammaleft} 
  \\
  & \le h^2\hat{\epsilon}^{3/4} \Big(\bigg\| \frac{\hat{q}_h}{\hat{y}}\bigg\|_{\Gammaleft}+ \bigg\| \frac{\hat{b}_h}{\hat{y}}\bigg\|_{\Gammaleft}\Big).
\end{aligned}
\end{equation*}

For the first term, using Hardy's one-dimensional inequality and the equivalence of norms on finite dimensional spaces%
, we obtain that:
\begin{equation*}
 \begin{aligned}
  h^2\hat{\epsilon}^{3/4} \left\| \frac{\hat{q}_h}{\hat{y}}\right\|_{\Gammaleft}
 &\lesssim  h^2\hat{\epsilon}^{3/4} \| \partial_{\hat{y}} \hat{q}_h\|_{\Gammaleft} 
 \lesssim h^2\hat{\epsilon}^{3/4} \| \hat{\nabla} \hat{q}_h\|_{\hat{\Omega}_\lambda}
 \lesssim  h^2\hat{\epsilon}^{3/4} \| \nabla q_h\|_{\Omega_\lambda}.
 \end{aligned}   
\end{equation*}

For the second term, due to the support of the bubbles that define $V_B$, we have $\hat{b}_h = c \, \hat{b}_{S_0}$ at $\hat{x} = x_{\min}$, where $\hat{b}_{S_0}$ denotes the patch bubble associated with the side $\hat{x} = x_{\min}$, for some constant $c$. Applying Proposition~\ref{propo: bh_inf_bound}, we obtain:

\begin{equation*}
      h^2\hat{\epsilon}^{3/4} \bigg\| \frac{\hat{b}_h}{\hat{y}}\bigg\|_{\Gammaleft}
      \lesssim 
      h^2\hat{\epsilon}^{3/4} \| \nabla  \hat{b}_h\|_{\hat{\Omega}_\lambda}.
\end{equation*}

Then, using Proposition \ref{Propo:AngleSpace2} 
one finds $\hat{\epsilon}_0, 0 < \hat{\epsilon}_0 < 1$ such that
\begin{equation}\label{Eq:78}
    |h\int_{\Gammaleft} \hat{\zeta}_\lambda \hat{v}_h e^x| \lesssim
     h^2\hat{\epsilon}^{3/4} \| \nabla v_h\|_{\Omega_\lambda}.
\end{equation}
for all $0<\epsilon/h < \hat{\epsilon}_0$.
For the last term in~\eqref{eq:xi_boundary}, by equivalence of norms, we obtain
\begin{equation}
\label{Eq:80}
    h\int_{\Gammaright} \text{\est } \hat{v}_h
    \leq
    h\est \int_{\hat{\Omega}_{\lambda}}\hat{v}_h,
\end{equation}

By combining~\eqref{Eq:79}, Lemma~\ref{Lemma:boundBubbleCorr}, and equations~\eqref{Eq:77},\eqref{Eq:78},\eqref{Eq:80}, using that $|\alpha_\lambda |\le 2N$, $\#\Lambda_B \lesssim N$, and $N = 1/h$, we conclude that

$$
\begin{aligned}
a\Big(\summLA[B] \zeta_\lambda,v_h\Big) 
    &\lesssim 
    \hat{\epsilon}^{1/4} \|v_h\|_\epsilon
 +  \summLA[B] h^2 \hat{\epsilon}^{3/4} (\|\nabla v_h\|_{\Omega_\lambda} +  \|v_h\|_{\Omega_\lambda}) \\
  & \lesssim
  \hat{\epsilon}^{1/4} \|v_h\|_\epsilon
 +  \hat{\epsilon}^{1/4} \Big(\summL[B] h\Big)^{1/2}  \Big(\summL[B] \epsilon \|\nabla v_h\|_{\Omega_\lambda}^2 + \|v_h\|_{\Omega_\lambda}^2\Big)^{1/2}  \\
   & \lesssim \hat{\epsilon}^{1/4} \|v_h\|_\epsilon.
\end{aligned}
$$

Analogously we can prove the same bound for $a\Big(\summLA[B] \zeta_\lambda,v_h\Big) $, whence
$$
\begin{aligned}
a\Big(\summLA[B] \zeta_\lambda+\summLA[T] \zeta_\lambda,v_h\Big) 
   & \lesssim \hat{\epsilon}^{1/4} \|v_h\|_\epsilon.
\end{aligned}
$$

We finally consider the outflow boundary. That is, we now consider the indices $\lambda\in\Lambda_L$, and
proceed again as for the parabolic boundary layer. The estimate is even simpler, 
since the boundary term
$$
h\int_{\Gammaleft} \hat{\zeta}_\lambda \hat{v}_h e^x = 0,
$$ because $v_{h|\partial\Omega}=0$.
%
\end{proof}

As a consequence of the last three lemmas and Lemma~\ref{Lemma:boundBubbleCorr} we arrive at the following result.

\begin{lemma}\label{Lemma:13}
Let $\varphi_B, \xi_B, \zeta_B$ denote the correctors corresponding to $u_B$ given by \eqref{eq:defBubbCorr}. There is  $\hat{\epsilon}_0, 0 < \hat{\epsilon}_0 < 1$ such that
$$
\vvvert \varphi_B\vvvert   +  \vvvert \xi_B\vvvert  + \vvvert \zeta_B\vvvert   \lesssim  (\epsilon/h)^{1/4}
$$
for all $0<\epsilon/h < \hat{\epsilon}_0$.
\end{lemma}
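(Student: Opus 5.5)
The bound follows directly from the definition \eqref{eq:||| norm} of the $\vvvert\cdot\vvvert$-norm and the four preceding lemmas. For any $v\in H^1(\Omega)$, $\vvvert v\vvvert$ is the sum of two pieces: $\|v\|_\epsilon$ and $\sup_{0\neq v_h\in V_h} a(v,v_h)/\|v_h\|_\epsilon$. So for each corrector $\chi^B\in\{\varphi_B,\xi_B,\zeta_B\}$ I will bound these two pieces separately and add them.

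\emph{The $\epsilon$-norm part.} This is exactly Lemma~\ref{Lemma:boundBubbleCorr}, which gives $\|\varphi_B\|_\epsilon+\|\xi_B\|_\epsilon+\|\zeta_B\|_\epsilon\lesssim(\epsilon/h)^{1/4}$. It holds without any smallness restriction on $\epsilon/h$.

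\emph{The supremum part.}
\begin{itemize}
\item For $\varphi_B$, Lemma~\ref{Lemma:10} gives $a(\varphi_B,v_h)\lesssim(\epsilon/h)^{1/4}\|v_h\|_\epsilon$ for all $v_h\in V_h$.
\item For $\xi_B$, Lemma~\ref{Lemma:11} gives the same inequality.
\item For $\zeta_B$, Lemma~\ref{Lemma:12} gives the same inequality, but only when $0<\epsilon/h<\hat\epsilon_0$.
\end{itemize}
Dividing each inequality by $\|v_h\|_\epsilon$ and taking the supremum over $0\neq v_h\in V_h$ bounds the second piece of the norm by $C(\epsilon/h)^{1/4}$ for each corrector.

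\emph{Combining.} Adding the two pieces for each corrector and then summing over the three correctors gives the claim. I take $\hat\epsilon_0$ to be the constant from Lemma~\ref{Lemma:12}. The other lemmas impose no restriction, or at most their own upper bound on $\epsilon/h$; in that case I replace $\hat\epsilon_0$ by the minimum of the finitely many thresholds, which still satisfies $0<\hat\epsilon_0<1$. The implicit constant is the sum of the constants from the four lemmas, so it is independent of $\epsilon$ and $h$.

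\emph{Obstacle.} There is essentially none at this stage; all the analytic difficulty sits in Lemma~\ref{Lemma:12}. The only care needed is to track that the smallness threshold $\hat\epsilon_0$ is the one inherited from that lemma, and hence ultimately from Proposition~\ref{Propo:AngleSpace2}, which gives its existence.
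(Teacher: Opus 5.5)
Your proposal is correct and matches the paper's argument. The paper states Lemma~\ref{Lemma:13} as an immediate consequence of two ingredients: Lemma~\ref{Lemma:boundBubbleCorr} for the $\|\cdot\|_\epsilon$ part of the norm, and Lemmas~\ref{Lemma:10}, \ref{Lemma:11} and \ref{Lemma:12} for the supremum part, with the threshold $\hat\epsilon_0$ taken from Lemma~\ref{Lemma:12}.
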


\begin{lemma}\label{Lemma:8}
For the difference between $u_B$ defined by~\eqref{def:Lambda+u_B} with the coefficients given by~\eqref{Eq:coefficients}, \eqref{Eq:tildealpha}  and its asymptotic expansion $\uBas$ defined by~\eqref{def:uB_ass} we have the following bound:
$$
\vvvert  u_B - \uBas \vvvert  \lesssim (\epsilon/h)^{1/2}.
$$
\end{lemma}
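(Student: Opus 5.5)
We treat $u_B-\uBas=\summLA(\psi_\lambda-\psilambdaas)$ bubble by bubble. Each $\psi_\lambda$ solves, after the scaling \eqref{eq:defHAtPsi} to the reference domain $\hat\Omega_\lambda$, a problem of the form \eqref{Eq:problem} with $\epsilon$ replaced by $\hat\epsilon=\epsilon/h$ and right-hand side $h\hat l$. Since $\psilambdaas$ is exactly the zeroth-order expansion of \cite{GieEtAl:13} for this problem, the error bound of Lemma~\ref{Temam:ErrorBound} applies on the reference patch. Applied to $\hat\psi_\lambda/h$ (right-hand side $\hat l$, which is $1$, $l_b$ or $l_t$, all smooth and independent of $\epsilon$), it gives
\[
\|\hat\psi_\lambda-\hat\psi_\lambda^{as}\|_{\hat\epsilon,\hat\Omega_\lambda}\lesssim h\,\hat\epsilon .
\]
The rectangular patches $(0,2)\times(0,1)$ and $(0,1)\times(0,2)$ only change the constants.

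The next step is to scale this back to $\Omega_\lambda$. In two dimensions $\|v\|_{\Omega_\lambda}=h\|\hat v\|_{\hat\Omega_\lambda}$ and $\epsilon\|\nabla v\|^2_{\Omega_\lambda}=\epsilon\|\hat\nabla\hat v\|^2=h\hat\epsilon\|\hat\nabla\hat v\|^2$. Hence
\[
\|\psi_\lambda-\psilambdaas\|_{\epsilon,\Omega_\lambda}\lesssim h^{1/2}\cdot h\hat\epsilon .
\]
We then sum with the coefficients, using $|\alpha_\lambda|\le 2N=2/h$ and the fact that each element meets at most three bubbles, with $\#\Lambda\lesssim N$. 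This gives
\[
\|u_B-\uBas\|_\epsilon^2\lesssim\sum_\lambda \alpha_\lambda^2 h^3\hat\epsilon^2\lesssim N\cdot h^{-2}h^{3}\hat\epsilon^2=\hat\epsilon^2 ,
\]
which is even better than $\hat\epsilon^{1/2}$.

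The main obstacle is the dual part $\sup_{v_h}a(u_B-\uBas,v_h)/\|v_h\|_\epsilon$. Here we cannot just use the $\epsilon$-norm bound, because the convective term costs a factor $\epsilon^{-1/2}$. We mimic the proof of Lemma~\ref{Lemma:4} locally. On each $\Omega_\lambda$, write $e_\lambda:=\psi_\lambda-\psilambdaas$, which vanishes on $\partial\Omega_\lambda$ up to e.s.t.\ terms. Integrating the convective term $\int\partial_x e_\lambda v_he^x$ by parts moves the derivative onto $v_h e^x$, so that
\[
|a(e_\lambda,v_h)|\lesssim \|e_\lambda\|_{\epsilon,\Omega_\lambda}\|v_h\|_{\epsilon,\Omega_\lambda}+\|e_\lambda\|_{\Omega_\lambda}\|\partial_xv_h\|_{\Omega_\lambda}+\text{e.s.t.}
\]
Now $\|e_\lambda\|_{\Omega_\lambda}\lesssim h^2\hat\epsilon$ and $\|\partial_xv_h\|_{\Omega_\lambda}\le\epsilon^{-1/2}\|v_h\|_{\epsilon,\Omega_\lambda}$.

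We then multiply by $|\alpha_\lambda|\le 2/h$ and sum with Cauchy--Schwarz over the $\lesssim N$ bubbles:
\[
\sum_\lambda|\alpha_\lambda|\,h^2\hat\epsilon\,\epsilon^{-1/2}\|v_h\|_{\epsilon,\Omega_\lambda}\lesssim N^{1/2}h\hat\epsilon\,\epsilon^{-1/2}\|v_h\|_\epsilon=\hat\epsilon^{1/2}\|v_h\|_\epsilon .
\]
This is exactly the claimed rate $(\epsilon/h)^{1/2}$; the first term is of order $\hat\epsilon$. The delicate points are twofold. First, we need the $L^2$ bound $\|\hat e_\lambda\|\lesssim h\hat\epsilon$ on the reference domain (it follows from the $\hat\epsilon$-norm bound). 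Second, the boundary terms from the integration by parts must vanish or be e.s.t.; they do, since $\psi_\lambda=0$ on $\partial\Omega_\lambda$ and the expansion satisfies the boundary conditions up to e.s.t.\ terms by construction of the correctors. Combining both parts yields $\vvvert u_B-\uBas\vvvert\lesssim(\epsilon/h)^{1/2}$.
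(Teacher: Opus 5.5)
Your proposal is correct and follows essentially the same route as the paper. You apply Lemma~\ref{Temam:ErrorBound} to each scaled bubble, which gives $\|u_B-u_B^{as}\|_\epsilon\lesssim\hat\epsilon$ via $|\alpha_\lambda|\le 2/h$ and $\#\Lambda\lesssim N$; then you integrate the convective term by parts (boundary terms e.s.t., handled at the same informal level as in the paper) and combine $\|\psi_\lambda-\psi_\lambda^{as}\|_{\Omega_\lambda}\lesssim h^2\hat\epsilon$, $\|\partial_x v_h\|\le\epsilon^{-1/2}\|v_h\|_\epsilon$ and Cauchy--Schwarz over the bubbles to obtain $\hat\epsilon^{1/2}$.
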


\begin{proof}
First we bound $u_B - u_B^{as}$ in the $\|\cdot \|_\epsilon$--norm. Using Lemma~\ref{Temam:ErrorBound}, we obtain
$$
\begin{aligned}
\|u_B - u_B^{as} \|_\epsilon^2 & 
\le
\summLA^2 \|\psilambda - \psilambdaas \|_{\epsilon,\Omega_\lambda}^2 \\
& =
\summLA^2(\epsilon \|\nabla(\psilambda - \psilambdaas) \|_{\Omega_\lambda}^2 + \| \psilambda - \psilambdaas \|_{\Omega_\lambda}^2)\\
&\lesssim
\summLA^2(h\hat{\epsilon} \|\hat{\nabla}(\hat{\psi}_\lambda - (\hat{\psi}_\lambda)_{as}) \|_{\hat{\Omega}_\lambda}^2 + h^2\| (\hat{\psi}_\lambda - (\hat{\psi}_\lambda)_{as}) \|_{\hat{\Omega}_\lambda}^2)\\
&\lesssim
\summLA^2h\| \hat{\psi}_\lambda - (\hat{\psi}_\lambda)_{as}\|_{\hat{\epsilon},\hat{\Omega}_\lambda}^2 \lesssim
\summLA^2h h^2 \hat{\epsilon}^2
\lesssim
\hat{\epsilon}^2.
\end{aligned}
$$

Next, take $ v_h\in V_h$,
$$
 a(u_B - u_B^{as},v_h)  \lesssim \|u_B - u_B^{as} \|_\epsilon \|v_h\|_\epsilon
   - \summLA \int_{\Omega_\lambda} \partial_x (u_B - u_B^{as}) v_h.
$$
The last term on the right hand side can be estimated as follows:
$$
\begin{aligned}
    - \summLA \int_{\Omega_\lambda} \partial_x (u_B - u_B^{as}) v_h & = 
          - \summLA h \int_{\hat{\Omega}_\lambda} \partial_{\hat{x}} (\hat{\psi}_\lambda - (\hat{\psi}_\lambda)_{as}) \hat{v}_h \\
           & = \summL \alpha_\lambda h \int_{\hat{\Omega}_\lambda}  (\hat{\psi}_\lambda - (\hat{\psi}_\lambda)_{as}) \partial_{\hat{x}} \hat{v}_h 
            + \text{\est } \|\partial_{\hat{x}}\hat{v}_h\|_{\hat{\Omega}_\lambda},
\end{aligned}
$$
since $(\hat{\psi}_\lambda - (\hat{\psi}_\lambda)_{as})_{|\partial\hat{T}}=$ \est Further, by Lemma~\ref{Temam:ErrorBound} we get:
$$
\begin{aligned}
            \summL \alpha_\lambda h\int_{\hat{\Omega}_\lambda}  (\hat{\psi}_\lambda - (\hat{\psi}_\lambda)_{as}) \partial_{\hat{x}} \hat{v}_h 
                &\lesssim
                \summL  \|\hat{\psi}_\lambda - (\hat{\psi}_\lambda)_{as}\|_{\hat{\Omega}_\lambda}
                   \|\partial_x v_h\|_{\Omega_\lambda} \\
           & \lesssim 
           \summL  h\hat{\epsilon}  \|\partial_x v_h\|_{\Omega_\lambda}
            = \summL \sqrt{h} \sqrt{\hat{\epsilon}} \sqrt{\epsilon} \|\partial_x v_h\|_{\Omega_\lambda} \\
           & \lesssim \hat{\epsilon}^{1/2} (\summL h)^{1/2} \| v_h\|_\epsilon \lesssim 
            \hat{\epsilon}^{1/2} \| v_h\|_\epsilon. 
\end{aligned}
$$
The result now follows from the definition of the $\vvvert \cdot \vvvert $--norm.
\end{proof}

\subsection{Proof of main error estimate}

Putting all things from above together we finally get:

\begin{thm}(Final error estimate)\label{Thm:final error estimate}
Let the notations and assumptions from the previous sections hold. 
Then there is $\hat{\epsilon}_0<1$, such that
$$
\vvvert \ueps - u_h\vvvert  \lesssim (\epsilon/h)^{1/4}.
$$
for all $0< \epsilon/h < \hat{\epsilon}_0.$
\end{thm}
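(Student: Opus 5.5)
The proof assembles the estimates of this section, starting from Cea's Lemma~\ref{Lemma:Cea} in the form~\eqref{eq:Cea}. We take the interpolant $\ueps_I = u_L + u_B \in V_h$ built in Section~\ref{Sec:interpolant}, with $u_L$ the bilinear interpolant of $u^0$ (zero on $\partial\Omega$) and $u_B$ given by~\eqref{def:Lambda+u_B} with coefficients~\eqref{Eq:coefficients} and~\eqref{Eq:tildealpha}. Then
\[
\vvvert \ueps - u_h\vvvert \lesssim \vvvert \ueps - \uas\vvvert + \vvvert \uas - \ueps_I\vvvert ,
\]
and Lemma~\ref{Lemma:4} bounds the first term by $\sqrt{\epsilon}$. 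Since $h\le 1$, we have $\sqrt{\epsilon} \le (\epsilon/h)^{1/2} \le (\epsilon/h)^{1/4}$ whenever $\epsilon/h<1$.

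For the second term we insert $\uBas$ and use the triangle inequality as in the display preceding Proposition~\ref{prop:bilinear0}:
\[
\vvvert \uas - \ueps_I\vvvert \le \vvvert u^0 - u_L - u_B^0\vvvert + \vvvert \varphi+\xi+\theta+\zeta - (\theta_B+\varphi_B+\xi_B+\zeta_B)\vvvert + \vvvert u_B - \uBas\vvvert .
\]
The first term vanishes by Proposition~\ref{prop:bilinear0}. In the middle term the $\theta$-contributions cancel: $\theta - \theta_B = 0$ by~\eqref{eq:canceledTheta}, together with the discussion at the start of Section~\ref{Sec:correctors} that the $\theta_\lambda$ sum to zero in the parabolic layers and in the corners. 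It is worth verifying that this cancellation is exact (or at least e.s.t.) on the corner elements $T_{N,1}$ and $T_{N,N}$, using~\eqref{Eq:r}. If only an e.s.t. remainder survives there, it is harmless in the $\vvvert\cdot\vvvert$ norm, because $a(g,v_h) \le \|g\|_{H^1}\|v_h\|_{H^1}$ and the inverse inequality costs at most a power of $h^{-1}$ (equivalently, of $\epsilon^{-1}$), which an e.s.t. bound absorbs.

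The remaining correctors are handled separately. Lemma~\ref{Lemma:5} gives $\vvvert\varphi\vvvert,\vvvert\xi\vvvert,\vvvert\zeta\vvvert \lesssim \epsilon^{1/4} \le (\epsilon/h)^{1/4}$. Lemma~\ref{Lemma:13} gives $\vvvert\varphi_B\vvvert+\vvvert\xi_B\vvvert+\vvvert\zeta_B\vvvert \lesssim (\epsilon/h)^{1/4}$ for $\epsilon/h<\hat\epsilon_0$, where $\hat\epsilon_0$ is the threshold from Lemma~\ref{Lemma:12}. Finally, Lemma~\ref{Lemma:8} yields $\vvvert u_B - \uBas\vvvert \lesssim (\epsilon/h)^{1/2} \le (\epsilon/h)^{1/4}$.

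Summing these contributions gives the claim with $\hat\epsilon_0$ taken as the minimum of the threshold in Lemma~\ref{Lemma:13} and $1$. The constants are independent of $\epsilon$ and $h$ because each cited lemma has that property. The only delicate point is confirming the exact cancellation of all $\theta$-type contributions, including in the corner elements; everything else is direct bookkeeping of the earlier lemmas.
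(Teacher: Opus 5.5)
Your proposal is correct and follows the paper's proof. Both use Cea's Lemma, then Lemma~\ref{Lemma:4} for $\vvvert \ueps-\uas\vvvert$, and the same splitting of $\vvvert \uas-\ueps_I\vvvert$, handled by Proposition~\ref{prop:bilinear0}, Lemma~\ref{Lemma:5}, the bubble-corrector bounds and Lemma~\ref{Lemma:8}, with $\theta-\theta_B$ removed by the cancellation of Section~\ref{Sec:correctors}. You are in fact more explicit than the paper, whose final display lists $\theta$ and $\theta_B$ in separate groups without invoking their cancellation, and which cites Lemma~\ref{Lemma:10} where Lemma~\ref{Lemma:13} is what is needed, since only that lemma covers $\varphi_B,\xi_B,\zeta_B$ and supplies $\hat\epsilon_0$.
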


\begin{proof}
By Cea's Lemma \ref{Lemma:Cea} we can estimate
$$
\vvvert \ueps - u_h\vvvert  \lesssim \vvvert \ueps - \ueps_I \vvvert  \lesssim  \vvvert  \ueps - \uas\vvvert  + \vvvert  \uas - \ueps_I\vvvert.
$$

From Lemma \ref{Lemma:4} we infer
$$
\vvvert  \ueps - \uas\vvvert  \lesssim \sqrt{\epsilon}.
$$

With the choice of the interpolant $\ueps_I = u_L + u_B$, $\uas = u^0 + \text{correctors for } \ueps$  and $\uBas = u_B^0 + \text{correctors for } u_B$
we get by Proposition~\ref{prop:bilinear0}, Lemma~\ref{Lemma:5}, Lemma~\ref{Lemma:10} and Lemma~\ref{Lemma:8}
$$
\begin{aligned}
\vvvert  \uas - \ueps_I\vvvert &=  \vvvert \uas - (u_L + u_B) +\uBas - \uBas\vvvert  \\
       & \leq \vvvert u^0 - (u_L + u_B^0)\vvvert   + \vvvert \text{correctors for }\uas\vvvert  + \vvvert \text{correctors for }u_B\vvvert  \\
       &\qquad + \vvvert u_B^{as} - u_B\vvvert \\
          &= \vvvert \text{correctors for }\uas\vvvert  + \vvvert \text{correctors for }u_B\vvvert  +
      \vvvert u_B^{as} - u_B\vvvert \\
    &\lesssim 
    \epsilon^{1/4} + \hat{\epsilon}^{1/4} + \hat{\epsilon}^{1/2},
\end{aligned}
$$
and the theorem is proved.
\end{proof}

\section{Numerical results}\label{Sec:numerics}
In this section we present an experimental study comparing the performance of three methods: BMZ, the newly proposed patch bubble method; RFB, the classical Residual-Free Bubbles method~\cite{Brezzi1994}; and RFBe, the enhanced Residual-Free Bubbles method proposed by Cangiani-S\"uli~\cite{Cangiani2005long}. Several norms will be considered in order to capture different aspects of the approximation error.
The results shown for RFBe are obtained using our recursive formulation of the algorithm in the calculus of their patch bubbles, which provides more accurate approximations than the original implementation presented in~\cite{Cangiani2005short}.
A sequence of uniform partitions of $\Omega=(0,1)^2$ with $N\times N$ elements is considered, with $N$ increasing.

We solve
\begin{equation}
    \label{eq:NumEx}
-\epsilon \Delta u - u_x = f \text{ in $\Omega$},
\qquad u=0  \text{ on $\partial\Omega$}.
\end{equation}
with $\epsilon = 10^{-6}$ and $f$ such that the exact solution is given by
\begin{equation*}
 u(x,y) = \psi(x)\,\psi(y^2),
 \qquad \psi(x) = 1-x - \frac{\exp(-x/\epsilon) - \exp(-1/\epsilon) }{  1 - \exp(-1/\epsilon) }.
\end{equation*}
which satisfies $-\epsilon \psi'' - \psi' = 1$, $\psi(0)=\psi(1)=0$. 

The solution exhibits a boundary layer of width $\epsilon$ at $x=0$ and of width $\sqrt{\epsilon}$ at $y=0$. For all mesh resolutions used, both layers are contained within a single element layer.
In all the computations that follow we consider $\epsilon = 10^{-6}$.

Before analyzing the error norms, we first discuss the qualitative behavior of the numerical approximations, by observing
Figure~\ref{F:NumericalExample}, which presents the graphs obtained as solution of \eqref{eq:NumEx} for different stabilization methods for $N=80$. 
As is well known, in convection-dominated regimes the main challenge is the correct resolution of the boundary layers, while avoiding spurious oscillations. Standard numerical solutions often exhibit oscillatory behavior near layers and may violate the expected smoothness or maximum-principle behavior.

\begin{figure}[h!tbp]
        \includegraphics[width=.35\textwidth]{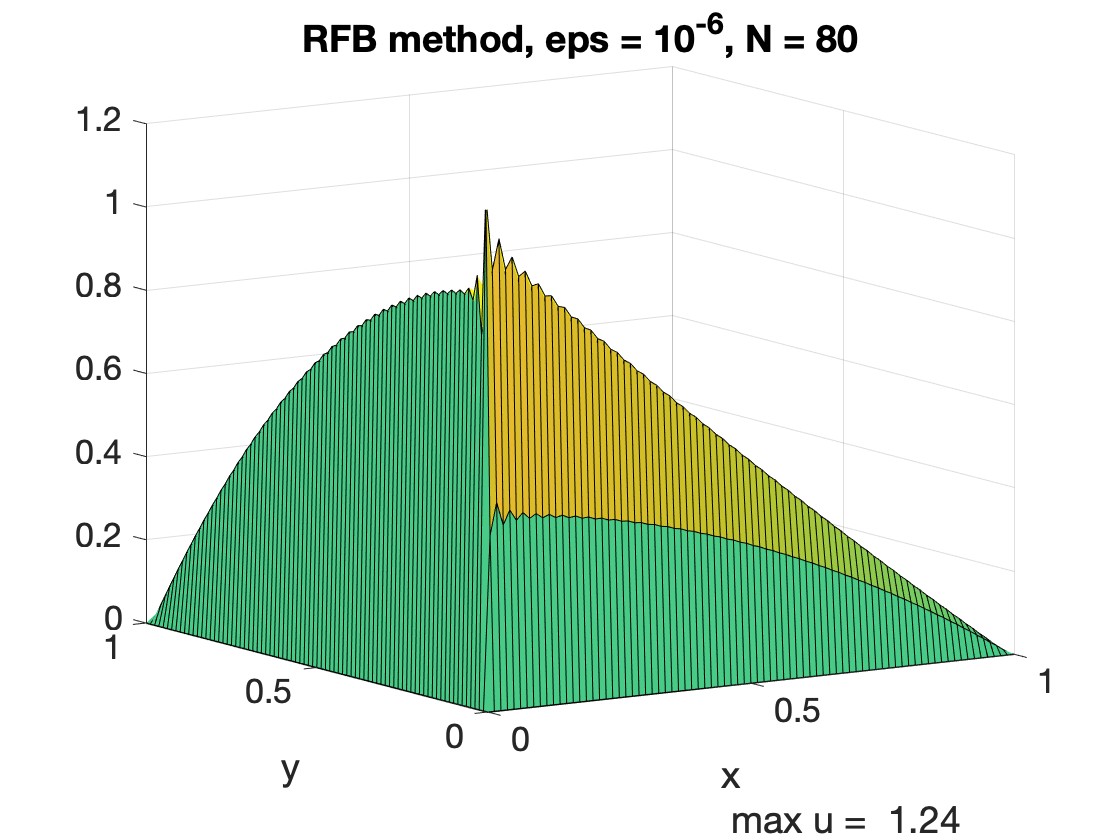}
        \hspace{-0.5 cm}
       \includegraphics[width=.35\textwidth]{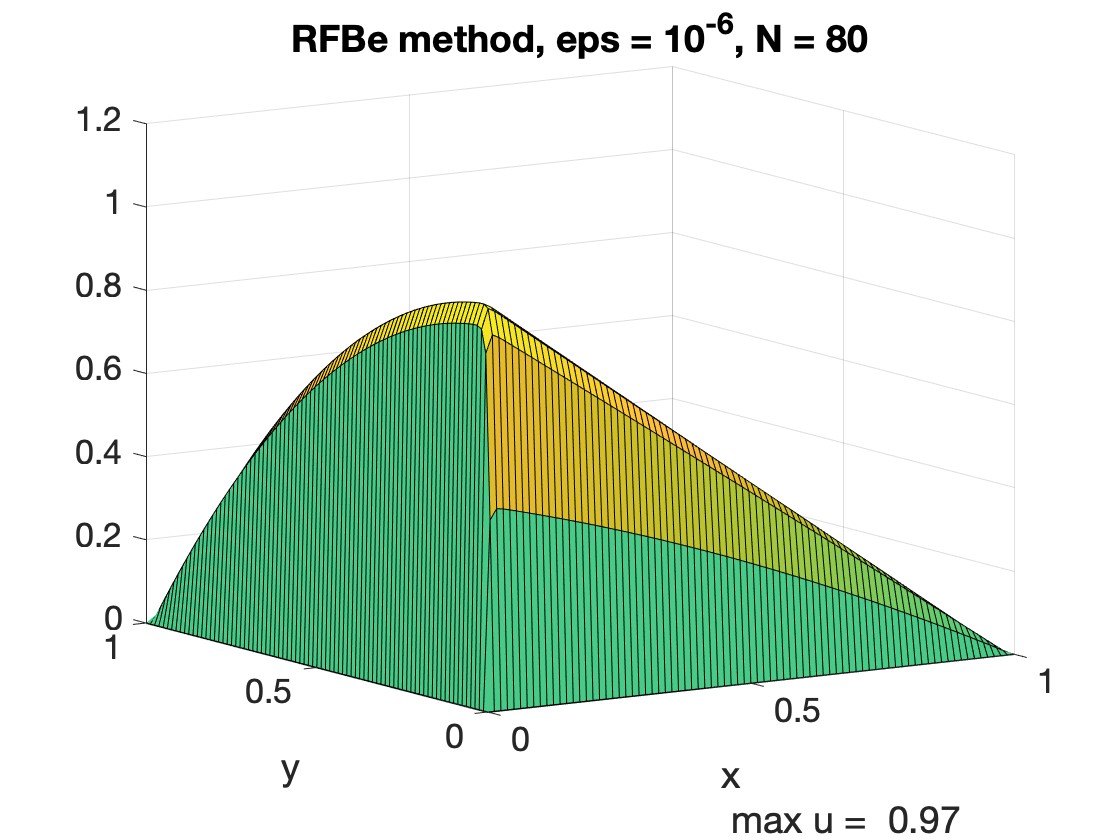}
       \hspace{-0.5 cm}
\includegraphics[width=.35\textwidth]{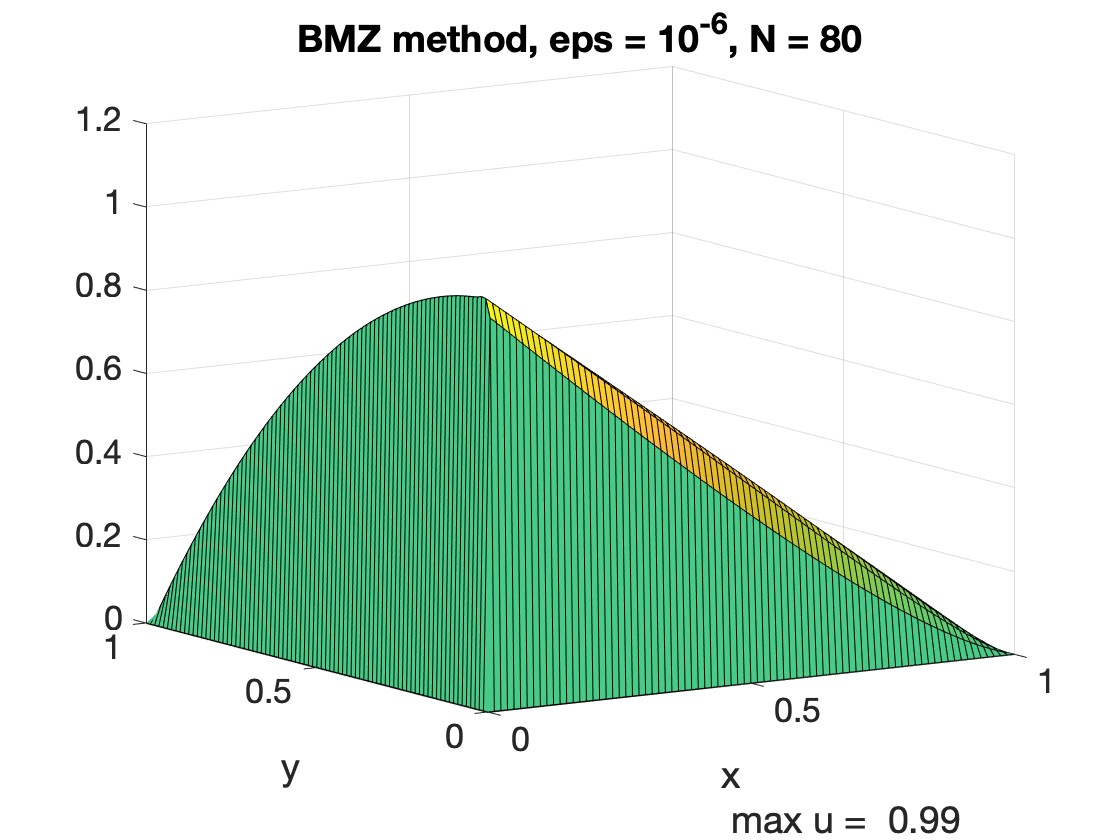}
    \caption{\small \label{F:NumericalExample} Example:
    Comparison of stabilization using RFB (left), RFBe (middle), and BMZ method (right). The solutions correspond to~\eqref{eq:NumEx}
with $\epsilon = 10^{-6}$, and $N =80$.}
\end{figure}

Near the bottom boundary $y=0$, the classical Residual-Free Bubble method (left) exhibits visible oscillations, reaching a peak value of approximately $1.2351$.
However,  ${\|u\|_{\infty}=1-\mathcal{O}(\epsilon)\approx 1}$, with its maximum attained near $(0,0).$
In contrast, both RFBe (middle) and BMZ (right) yield smooth, non-oscillatory approximations. The maximum attained by RFBe is approximately $0.9664$, while BMZ achieves $0.9890$, which is the closest to the expected value.

It is also instructive to inspect how the methods capture the two boundary layers.
Along the vertical layer at $x=0$, the BMZ solution decays to zero within a single layer of elements, as expected for a layer of width $\epsilon$. The RFBe approximation requires roughly two layers to achieve the same decay.
Along the bottom layer at $y=0$, the RFBe solution needs at least three elements before vanishing, whereas the BMZ method resolves the layer within only two elements.

In the analysis presented along this article the previous sections we used the norms $\|\cdot\|_{\epsilon}$ and $\vvvert \cdot \vvvert$ defined in~\eqref{eq:epsnorm} and~\eqref{eq:||| norm}.
For any $v\in \Hoi[\Omega],$
\begin{equation*}
\vvvert v \vvvert \ge 
\|v\|_{\epsilon}=
\left(\epsilon\|\nabla v\|_{L^2}^2
+ \|v\|_{L^2}^2\right)^{1/2},\quad\text{so that}\quad
\|v\|_{\epsilon}\ge
\| v\|_{L^2},\
\sqrt{\epsilon}\|\nabla v\|_{L^2}.
\end{equation*}
Tables~\ref{table: EOC L2 (2h,1-2h)} and ~\ref{table: EOC H1} reports the $L^2$ norm, and $H^1$–seminorm errors.
We additionally include the stability norm from~\cite{Brezzi1999}, in Table~\ref{table: EOC  stab}, offering a complementary way to evaluate the approximation, especially in terms of its derivative. It is defined by
\begin{equation*}\label{stab-norm}
    \left(
        \epsilon \|\nabla v\|_{L^2(\Omega)}^2 + 
        \sum_T h_T\|\aaa \cdot \nabla v\|_{L^2(T)}^2\right)^{1/2}.
\end{equation*}

When computing the norms, all degrees of freedom are taken into account.
According to~\eqref{eq:discrete problem}, the analyzed difference is given by:  
\begin{equation*}
    u - u_h = u - (u_L + u_B).
\end{equation*}    

Classically, only the nodal component $u - u_L$ is considered.
However, including the bubble contributions $u_B$ yields a more accurate measure of the approximation, since it incorporates information from the interior of each element.
To avoid the influence of the boundary layers, the norms are computed in the reduced domain $(2h,1-2h)^2$.

In Table \ref{table: EOC H1} we present the $H^1$-seminorm of the errors for the three methods.
It is worth noting that both RFBe and RFB exhibit nearly constant errors across all mesh refinements. In contrast, the BMZ method shows a clear decrease of the error as the mesh is refined: it reaches a value of $0.225$ for $N=160$, which is roughly half of the corresponding RFBe error. 

\begin{table}[ht]
\centering
\begin{tabular}{|c|c|c|c|c|c|c|}
\hline
$N$ & BMZ & EOC & RFBe & EOC & RFB & EOC \\
\hline
10  &  3.195\,\, &   -    & 0.428\,\,&   -   & 0.635\,\, &   -  \\
20  &  2.486\,\, &  0.3   & 0.422\,\,&  0.0  & 0.739\,\, & -0.2 \\
40  &  1.400\,\, &  0.8   & 0.492\,\,& -0.2  & 0.798\,\, & -0.1 \\
80  &  0.531\,\, &  1.3   & 0.613\,\,& -0.3  & 0.901\,\, & -0.1 \\
160 &  0.225\,\, &  1.2   & 0.578\,\,&  0.0  & 1.059\,\, & -0.2 \\
\hline
\end{tabular}
\caption{\small \label{table: EOC  H1} $H^1$ errors in $(2h,1-2h)^2$ and experimental order of convergence. We consider the solutions of \eqref{eq:NumEx} for different stabilization methods with ${\epsilon=10^{-6}}$. It is worth noting that both RFBe and RFB display almost mesh-independent errors across all refinements, whereas the BMZ approach exhibits a clear reduction of the error as the mesh is refined.}
\end{table}

Table \ref{table: EOC stab} reports the errors in the stability norm for the three methods.
Overall, the three approaches exhibit errors of comparable magnitude, although their convergence behaviors differ noticeably. For coarse meshes $(N=10)$, RFBe achieves the smallest error, approximately a factor of $2.5$ and $6$ smaller than RFB and BMZ, respectively.
Regarding the experimental order of convergence (EOC), RFBe displays a rate below $1$ across all refinements. In contrast, the BMZ method shows a markedly superior performance: for $20\le N\le 160$ its EOC ranges between $1.1$ and $2.0$, significantly outperforming both RFB and RFBe. 

\begin{table}[ht]
\centering
\begin{tabular}{|c|c|c|c|c|c|c|}
\hline
$N$ & BMZ & EOC & RFBe & EOC & RFB & EOC \\
\hline
10  & 18.211\,e\,-2 &    -    & 3.099\,e\,-2&   -  & 7.881\,e\,-2 &  -  \\
20  &  8.074\,e\,-2 &   1.1   & 1.807\,e\,-2&  0.7 & 6.968\,e\,-2 & 0.1 \\
40  &  2.851\,e\,-2 &   1.5   & 0.930\,e\,-2&  0.9 & 5.172\,e\,-2 & 0.4 \\
80  &  0.720\,e\,-2 &   1.9   & 0.510\,e\,-2&  0.8 & 3.393\,e\,-2 & 0.6 \\
160 &  0.169\,e\,-2 &   2.0   & 0.341\,e\,-2&  0.5 & 1.985\,e\,-2 & 0.7 \\
\hline
\end{tabular}
\caption{\small \label{table: EOC  stab} Stability norm errors in $(2h,1-2h)^2$ and experimental order of convergence. We consider the solutions of \eqref{eq:NumEx} for different stabilization methods with ${\epsilon=10^{-6}}$.
For $N=160$, the BMZ method yields the smallest error, approximately half of the RFBe error and about one order of magnitude smaller than that obtained with RFB. Regarding the experimental order of convergence, RFBe exhibits rates below $1$ for all mesh refinements, whereas BMZ shows a significantly better performance, with EOC values ranging between $1.1$ and $2.0$. }
\end{table}


 
 Table \ref{table: EOC L2 (2h,1-2h)} reports the $L^2$ errors for the three methods.
Although BMZ exhibits the largest error on the coarsest mesh ($N=10$), its convergence behavior is markedly superior: the experimental order of convergence for BMZ is consistently higher than that of RFBe and RFB at every refinement level shown. As a consequence, the BMZ error decreases more rapidly, eventually becoming the smallest among the three methods. For instance, at $N=160$, the BMZ error is already smaller by a factor than the RFBe error. It is important to note, however, that to the best of our knowledge, no theoretical error estimates have been developed for the $L^2$ norm for any of the methods considered in this article.

\begin{table}[ht]
\centering
\begin{tabular}{|c|c|c|c|c|c|c|}
\hline
$M$ & BMZ & EOC & RFBe & EOC & RFB & EOC \\
\hline
10  & 17.066\,e\,-3 &   -    & 3.838\,e\,-3&  -   & 9.566\,e\,-3 &  -  \\
20  &  7.307\,e\,-3 &  1.2   & 3.067\,e\,-3& 0.3  & 8.338\,e\,-3 & 0.1 \\
40  &  2.570\,e\,-3 &  1.5   & 2.330\,e\,-3& 0.3  & 6.024\,e\,-3 & 0.4 \\
80  &  0.578\,e\,-3 &  2.1   & 1.581\,e\,-3& 0.5  & 4.079\,e\,-3 & 0.5 \\
160 &  0.157\,e\,-3 &  1.8   & 0.813\,e\,-3& 0.9  & 2.683\,e\,-3 & 0.6 \\
\hline
\end{tabular}
\caption{\small \label{table: EOC L2 (2h,1-2h)}$L^2$ errors in $(2h,1-2h)^2$. We consider the solutions of \eqref{eq:NumEx} for different stabilization methods with ${\epsilon=10^{-6}}$. Although BMZ yields the largest error on the coarsest mesh, it converges significantly faster than RFBe and RFB. As a result, its error decreases more rapidly and becomes the smallest among the three methods under mesh refinement.}
\end{table}

In \cite{BMZ}, we extensively report on the excellent performance of the algorithm in different situations,
including cases with discontinuous boundary conditions, as well as examples involving non-parallel flows and interior layers.
We also extend our method to the time-dependent case.

\section{Conclusions}
\label{Sec:conclusions}

We have presented error estimates for the BMZ method~\cite{BMZ} applied to a convection dominated convection-diffusion equation, in the particular case of a channel flow in a square domain.
The error estimate is obtained in the energy norm
\[
\vvvert  v\vvvert   := \|v\|_\epsilon + 
\sup_{0\not= z_h\in V_h} \frac{a(v,z_h)}{\|z_h\|_\epsilon}.
\]
and states that, if $\ueps$ is the exact solution of the problem and $u_h$ the solution obtained with the BMZ method, then
$$ 
\vvvert \ueps - u_h\vvvert \le C (\epsilon/h)^{1/4},
$$ 
with a constant $C$ independent of $h$ and $\epsilon$, if $\epsilon/h < \hat{\epsilon}_0$ for some $0<\hat{\epsilon}_0<1$ small enough.


\section{Acknowledgments}

P.\ Morin was partially supported by the DFG Research Training Group 2339 (RTG 2339) \emph{Interfaces, Complex Structures, and Singular Limits}.

P.\ Morin and I.\ Zocola were partially supported by Agencia Nacional de Promoci\'on Cient\'ifica y Tecnol\'ogica through grant PICT-2020-SERIE A-03820, and by Universidad Nacional del Litoral through grants CAI+D-2020 50620190100136LI and CAI+D-2024 85520240100018LI.


\begin{appendix}
\section{Appendix: Results by Gie/Jung/Temam}\label{Appendix:Temam}

This appendix presents the results established in \cite{GieEtAl:13} that are used throughout this work. Some of the results coincide with the cited original statement, and others are simple adaptations, for which we provide a proof. In~\cite{GieEtAl:13}, also higher-order asymptotic approximations are discussed, but the zeroth-order approximation (i.e., $n = 0$) is sufficient for our purpose. 

\subsection{Results on the physical domain}
We consider the asymptotic expansion of $\ueps$ from~\cite{GieEtAl:13} corresponding to $n=0$:
\begin{equation*}
\uas := u^0 + \text{correctors}.
\end{equation*}

In what follows, we establish the expression of each function involved in this expansion, presenting both its constitutive equation and its global behavior. To this end, we consider a rectangular domain $R = (0,x_{\max}) \times (0,y_{\max})$, where $x_{\max},y_{\max}$ are taken to be either $1$ or $2$ throughout this work.

\medskip 
\textit{$u^0$: $0$-th order approximation.}
Let $u^0$ be the solution to
\begin{equation*}
    -  \partial_x u^0 =f \quad\text{in $R$,}\qquad
        u^0 = 0 \quad\text{at } x=x_{\max},
\end{equation*}
In the particular case that $f = f(y)$, we get that $u^0(x,y) = f(y)(x_{\max}-x).$

\medskip 
\textit{$\varphi$: Parabolic boundary layer correctors (at $y=0$, $y=y_{\max}$).}
In order to simplify notation, we only develop the expression for the bottom boundary layer, corresponding to $y=0$. The definition is analogous for $y=y_{\max}$.

Let $\delta(x)$ be a smooth function such that
\begin{equation*}
    \delta(x) = 0 \quad \text{for } 0\le x \le \frac12 x_{\max},\qquad
    \delta(x) = 1 \quad \text{for } x \ge \frac34 x_{\max}.
\end{equation*}
Then, two auxiliary functions are defined
\begin{equation*}
    \gamma(x)=f(x_{\max},0)(x-x_{\max})\delta(x),\qquad
    h(x)=-u^0(x,0) -\gamma(x).
\end{equation*}

In $R$, the corrector is established as the solution of
\begin{equation*}
    -\epsilon\,\partial_{y}^2\varphi-\partial_x\varphi = 0, \quad
    \varphi(x,0)=h(x), \quad 
    \varphi(x_{\max},y)=0.
\end{equation*}

\medskip 
\textit{$\xi$: Elliptic boundary layer correctors (at $x=x_{\max}$, and $y=0$, $y=y_{\max}$).}
In order to simplify notation, we only develop the expression for the bottom right corner, corresponding to $(x,y)=(x_{\max},0)$. The definition is analogous for $(x,y)=(x_{\max},y_{\max})$.
In $R$, the corrector is established as the solution of
\begin{equation*}
    -\epsilon\Delta \xi -\partial_x\xi = 0, \qquad
    \xi(x,0)=\gamma(x), \qquad 
    \xi(0,y)=0,\qquad 
    \xi(x_{\max},y)=0.
\end{equation*}

\medskip 
\textit{$\theta$: Ordinary boundary layer correctors (at $x=0$).}
In $R$, the corrector is established as the solution of
\begin{equation}\label{def:theta}
    -\epsilon\partial_x^2\theta = \partial_x\theta, \qquad
    \theta(0,y)=-u^0(0,y), \qquad 
    \lim_{x \to \infty} \theta(x,y) = 0.
\end{equation}
In the case that $f\equiv 1$, we get that $\theta(x,y) = -x_{\max}e^{-x/\epsilon}.$ 

\medskip 
\textit{$\zeta$: Ordinary corner layer correctors (at $x=0$, and $y=0$, $y=y_{\max}$).}
In order to simplify notation, we only develop the expression for the bottom left corner, corresponding to $(x,y)=(0,0)$. The definition is analogous for $(x,y)=(0,y_{\max})$.
In $R$, the corrector is established as the solution of
\begin{equation*}
    -\epsilon\partial_x^2\zeta = \partial_x\zeta, \qquad
    \zeta(0,0)=-\varphi(0,y), \qquad 
    \lim_{x \to \infty} \zeta(x,y) = 0.
\end{equation*}
from where it follows that $\zeta(x,y) = -\varphi(0,y)e^{-x/\epsilon}.$

\medskip 
\textit{$\eta$: Supplementary corner layer correctors (at ${x=0}$, and ${y=0}$, ${y=y_{\max}}$).}
In \cite{GieEtAl:13}, additional corner correctors are introduced. However, since we are considering the case $n=0$, these correctors $\eta$ vanish. In particular, the corrector corresponding to the bottom left corner is given by
\begin{equation*}
\begin{aligned}
\eta(x,y) &
= -(y_{\max} - y)\left(\theta(0, 0) + \zeta(0, 0)\right) 
= (y - y_{\max})
\left(-u^0(0,0) - \varphi(0, 0)\right) \\
&=(y - y_{\max})
\left(-u^0(0,0) - h(0)\right)
= (y - y_{\max})
\left(-u^0(0,0) +u^0(0,0)\right)\equiv 0.
\end{aligned}
\end{equation*}

We now state the results from~\cite{GieEtAl:13} adapted to our situation, corresponding to the asymptotic approximation which is obtained when considering $n=0$. In all the results that follow, the constant $\kappa$ depends on $x_{\max}$ and $y_{\max}$, and is otherwise independent of $\epsilon$ for $0<\epsilon<\epsilon_0$.

\begin{lemma}[Theorem 9,\cite{GieEtAl:13}]
\label{Temam:ErrorBound}
Let $\ueps$ be the exact solution and $\uas$ its asymptotic approximation corresponding to $n=0$. Then
\[
\|\ueps - \uas\|_{\epsilon}\leq \kappa \, \|f\| \, \epsilon.
\]
\end{lemma}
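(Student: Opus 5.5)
This statement is Theorem 9 of \cite{GieEtAl:13}, specialised to $n=0$ and to the rectangle $R$. I would verify its hypotheses and follow the energy-method structure of that proof rather than reprove everything from scratch.

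The plan starts by setting $w := \ueps - \uas$ and computing the residual $-\epsilon\Delta w - \partial_x w$ together with the boundary values of $w$ on $\partial R$. By construction:
\begin{itemize}
\item $u^0$ satisfies $-\partial_x u^0 = f$ exactly, so its residual is $-\epsilon\Delta u^0$. This is $O(\epsilon)$ in $L^2$ (it vanishes when $f=f(y)$ is linear, and in general it is bounded by $\epsilon$ times norms of $f$, which I absorb into $\kappa\|f\|$ as in \cite{GieEtAl:13}).
\item $\theta$ and $\zeta$ solve their one-dimensional equations in $x$. Their residuals are therefore $-\epsilon\partial_y^2\theta$ and $-\epsilon\partial_y^2\zeta$. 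For $\theta$ this vanishes when $u^0(0,\cdot)$ is constant. For $\zeta$ it equals $\epsilon\,\partial_y^2\varphi(0,y)\,e^{-x/\epsilon}$; its $L^2$ norm carries a factor $\epsilon^{1/2}$ from the $x$-integration, while $\partial_y^2\varphi\sim\epsilon^{-1}$ with $L^2$-in-$y$ gain $\epsilon^{1/4}$. This has to be paired carefully, as I describe below.
\item $\varphi$ solves the parabolic equation, so its residual is $-\epsilon\partial_x^2\varphi$. This is $O(\epsilon)$ after multiplication by $\epsilon^{1/4}$, using the heat-kernel bounds in Lemma~\ref{Temam:Lemma2}.
\item $\xi$ solves the full equation exactly, so it contributes no residual.
\end{itemize}

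For the boundary data I check the following. On $x=x_{\max}$, $u^0=0$, and $\varphi,\xi$ vanish there, while $\theta,\zeta$ are e.s.t. On $x=0$, $\theta$ cancels $u^0$ and $\zeta$ cancels $\varphi$; $\xi(0,y)=0$; and the corner corrector $\eta\equiv0$ ensures compatibility. On $y=0$, $\varphi+\xi = h+\gamma = -u^0(x,0)$, while $\theta(x,0)+\zeta(x,0)=-u^0(0,0)e^{-x/\epsilon}+\varphi(0,0)e^{-x/\epsilon}=0$. Hence $w$ has boundary trace that is e.s.t. I would remove this trace with an e.s.t.\ lifting, giving $\tilde w\in H^1_0(R)$.

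Next I would test the equation for $\tilde w$ with $\tilde w e^{x}$, which matches the weighted form $a(\cdot,\cdot)$ with $\aaa=(-1,0)$. By Lemma~\ref{Lemma:Coercivity}, applied on $R$ with constants depending only on $x_{\max},y_{\max}$, this gives
\[
\|\tilde w\|_\epsilon^2 \lesssim \langle r,\tilde w e^x\rangle + \text{e.s.t.},
\]
where $r$ is the residual. Terms with $\|r\|\lesssim\epsilon$ are handled by Cauchy--Schwarz and absorption.

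The main obstacle is the residuals that are not small in $L^2$, namely $\epsilon\partial_x^2\varphi$ near the singular corner of $\varphi$ at $(x_{\max},0)$, and the $\zeta$ residual. For these I would integrate by parts once, so that one derivative falls on $\tilde w e^x$. For example,
\[
\epsilon\int \partial_x^2\varphi\,\tilde w e^x = -\epsilon\int \partial_x\varphi\,\partial_x(\tilde w e^x),
\]
which is bounded by $\sqrt\epsilon\,\|\partial_x\varphi\|\,\|\tilde w\|_\epsilon\lesssim \epsilon^{1/2}\epsilon^{1/4}\cdots$. This integration by parts is the step where I would lean directly on the estimates of \cite{GieEtAl:13} to reach the full rate $\epsilon$. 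The smoothing cutoff $\delta$ in $\gamma$ is what makes $\varphi$ regular near $x=x_{\max}$, and it is this that yields $\|\partial_x^k\varphi\|\lesssim\epsilon^{1/4}$ without singular growth. If this direct route fell short, I would fall back on citing the theorem itself, noting that the dependence on $f$ enters only through $u^0$ and therefore scales linearly, giving the factor $\kappa\|f\|$.
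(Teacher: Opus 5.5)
The paper gives no argument of its own here. It quotes Theorem~9 of \cite{GieEtAl:13}, so your fallback is exactly the paper's route. Your framework is also right: residual bookkeeping, exact cancellation of the traces on $x=0$ and $y=0$, an e.s.t.\ lifting, and testing with $\tilde w e^{x}$.

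However, the part of your sketch meant to produce the rate $\epsilon$ fails as written. Your displayed integration by parts for $\varphi$ gives
$\epsilon\|\partial_x\varphi\|\,\|\partial_x(\tilde w e^x)\|\lesssim\sqrt{\epsilon}\,\|\partial_x\varphi\|\,\|\tilde w\|_\epsilon\lesssim\epsilon^{3/4}\|\tilde w\|_\epsilon$,
and only $\epsilon^{1/4}$ if you use the bound of Lemma~\ref{Temam:Lemma2}. Moving an $x$-derivative onto $\tilde w$ costs $\epsilon^{-1/2}$ and gains nothing, because $\|\partial_x\varphi\|$ and $\|\partial_x^2\varphi\|$ are of the same order $\epsilon^{1/4}$.

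Nor is this term an obstacle. For $f$ independent of $x$ (the only case the paper uses), the datum $h(x)=-u^0(x,0)-\gamma(x)=f(x_{\max},0)(x-x_{\max})(1-\delta(x))$ vanishes for $x\ge\frac34x_{\max}$. So $\varphi$ has no corner singularity at $(x_{\max},0)$; the corner layer is carried by $\xi$, which has zero residual. Plain Cauchy--Schwarz then gives $\|\epsilon\partial_x^2\varphi\|\lesssim\epsilon^{5/4}$. This agrees with your own remark about the role of $\delta$, and contradicts your description of $\epsilon\partial_x^2\varphi$ as not small in $L^2$.

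The term that genuinely limits the rate is the $\zeta$-residual $r_\zeta=\epsilon\,\partial_y^2\varphi(0,y)\,e^{-x/\epsilon}$. By your own count its $L^2$ norm is only $\epsilon^{3/4}$, and integrating by parts in $y$ still leaves $\sqrt{\epsilon}\,\|\partial_y\zeta\|\sim\epsilon^{3/4}$. The missing idea is to exploit the thinness of the layer in $x$ together with $\tilde w(0,y)=\tilde w(x_{\max},y)=0$:
\[
\int_0^{x_{\max}} e^{-x/\epsilon}\,\tilde w e^{x}\,dx=\epsilon\int_0^{x_{\max}} e^{-x/\epsilon}\,\partial_x(\tilde w e^{x})\,dx .
\]
Using $\epsilon\partial_y^2\varphi=-\partial_x\varphi$ and $\|\partial_x\varphi(0,\cdot)\|_{L^2(0,y_{\max})}\lesssim\epsilon^{1/4}$, this gives
$|(r_\zeta,\tilde w e^x)|\lesssim\epsilon\cdot\epsilon^{1/4}\cdot\epsilon^{1/2}\cdot\|\partial_x(\tilde w e^x)\|\lesssim\epsilon^{5/4}\|\tilde w\|_\epsilon$.
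This is the same Hardy-type device the paper uses for $\zeta$ in Lemma~\ref{Lemma:5}.

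With these two corrections, every residual other than $-\epsilon\Delta u^0$ contributes at most $\epsilon^{5/4}\|\tilde w\|_\epsilon$; $-\epsilon\Delta u^0$ vanishes for $f=f(y)$ linear. The weighted energy estimate then closes with rate $\epsilon$. Without them your sketch stops at $\epsilon^{3/4}$, and the lemma rests solely on the citation, as in the paper.
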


\begin{lemma}[Lemma 1,\cite{GieEtAl:13}]\label{Temam:Lemma1}
Let $\varphi$ be the corrector in the parabolic boundary layer. Then,
\begin{equation*}
|\varphi|\leq \kappa \, \|f\| \, \exp(-c \frac{y}{\sqrt{\epsilon}}),\quad \text{ and }\quad
|y\varphi|\leq \kappa \, \|f\| \, \epsilon^{1/2}\exp(-c \frac{y}{\sqrt{\epsilon}}).
\end{equation*}
\end{lemma}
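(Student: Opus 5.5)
The plan is to treat $\varphi$ as the solution of a one-dimensional heat equation in which $x$ plays the role of (reversed) time. With $t:=x_{\max}-x$ and $\tilde\varphi(t,y):=\varphi(x_{\max}-t,y)$, the corrector equation $-\epsilon\partial_y^2\varphi-\partial_x\varphi=0$ becomes
\[
\partial_t\tilde\varphi=\epsilon\,\partial_y^2\tilde\varphi ,\qquad y>0,\quad 0<t\le x_{\max}.
\]
The condition $\varphi(x_{\max},y)=0$ becomes the zero initial datum $\tilde\varphi(0,y)=0$. The condition $\varphi(x,0)=h(x)$ becomes the Dirichlet datum $\tilde\varphi(t,0)=h(x_{\max}-t)$. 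For the bottom layer I will work on the half-line $y>0$, which is the setting in which the corrector of~\cite{GieEtAl:13} is defined. On the bounded rectangle, the contribution from the opposite side is exponentially small and handled identically. The data are compatible at $t=0$, since $h(x_{\max})=-u^0(x_{\max},0)-\gamma(x_{\max})=0$.

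First I write the solution explicitly via the boundary-potential (Duhamel) formula:
\[
\tilde\varphi(t,y)=\int_0^t \frac{y}{2\sqrt{\pi\epsilon}\,(t-s)^{3/2}}\exp\Big(-\frac{y^2}{4\epsilon(t-s)}\Big)\,h(x_{\max}-s)\,ds .
\]
The kernel is nonnegative and integrates over $s\in(0,t)$ to $\operatorname{erfc}\big(y/(2\sqrt{\epsilon t})\big)$. Hence
\[
|\varphi|\le \sup|h|\,\operatorname{erfc}\Big(\frac{y}{2\sqrt{\epsilon x_{\max}}}\Big)\le \sup|h|\,\exp\Big(-\frac{y^2}{4\epsilon x_{\max}}\Big).
\]
Equivalently, the comparison principle gives this bound with the barrier $\sup|h|\operatorname{erfc}(y/(2\sqrt{\epsilon t}))$. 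The bound $\sup|h|\le \sup|u^0|+\sup|\gamma|\lesssim\|f\|$ follows from $u^0(x,y)=f(y)(x_{\max}-x)$ and the definition of $\gamma$; this is absorbed into $\kappa$.

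Next I convert the Gaussian decay into exponential decay in $y/\sqrt\epsilon$. With $a=y/\sqrt{\epsilon}$, the elementary inequality
\[
\frac{a^2}{4x_{\max}}\ge c\,a-c^2x_{\max}
\]
(completing the square) gives $\exp(-y^2/(4\epsilon x_{\max}))\le e^{c^2x_{\max}}\exp(-c\,y/\sqrt\epsilon)$. This proves the first estimate. For the second, I split the Gaussian into two equal factors:
\[
y\exp\Big(-\frac{y^2}{4\epsilon x_{\max}}\Big)=\sqrt\epsilon\Big[a\,e^{-a^2/(8x_{\max})}\Big]e^{-a^2/(8x_{\max})}.
\]
The bracket is bounded by a constant depending only on $x_{\max}$. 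The remaining factor is again bounded by $C\exp(-c\,y/\sqrt\epsilon)$ by the same completion of squares, with a smaller $c$. This yields $|y\varphi|\le\kappa\|f\|\epsilon^{1/2}\exp(-cy/\sqrt\epsilon)$.

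The only delicate point is justifying the representation, or the comparison argument, on the actual rectangle, where $\varphi$ is defined with data only on $y=0$ and $x=x_{\max}$. I would handle it via the maximum principle for the parabolic operator. The function $\pm\varphi-\sup|h|\operatorname{erfc}(y/(2\sqrt{\epsilon t}))$ is a subsolution with nonpositive parabolic-boundary values on $\{y=0\}\cup\{t=0\}$ (and, if a top boundary is imposed, on it as well, where the top-layer contribution is treated symmetrically). This step is routine but must be stated. Everything else reduces to the two elementary inequalities above, with constants depending only on $x_{\max}\in\{1,2\}$.
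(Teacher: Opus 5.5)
Your proof is correct. Note that the paper gives no argument for this lemma: it is quoted from Lemma~1 of \cite{GieEtAl:13}. Your proposal is therefore a self-contained replacement for a citation, not a variant of a proof in the text.

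The individual steps all hold:
\begin{itemize}
\item With $t=x_{\max}-x$ the corrector solves $\partial_t\tilde\varphi=\epsilon\,\partial_y^2\tilde\varphi$ with zero initial datum.
\item The boundary-potential kernel is nonnegative, with total mass $\operatorname{erfc}(y/(2\sqrt{\epsilon t}))$.
\item $\operatorname{erfc}(z)\le e^{-z^2}$ for $z\ge 0$.
\item Completing the square gives the first estimate, and splitting the Gaussian gives the second.
\end{itemize}
Compatibility is even better than you state. For $f=f(y)$ one has $h(x)=-f(0)(x_{\max}-x)(1-\delta(x))$, which vanishes identically for $x\ge\frac34 x_{\max}$.

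What your route gains over the bare citation is an explicit, linear dependence on the data: $\kappa\|f\|$ becomes $C(x_{\max})\sup|h|\lesssim|f(0)|$. This is exactly what the paper uses implicitly when it applies the lemma to the scaled bubbles with right-hand side $h\,\hat l(\hat y)$ to extract the factor $h$, e.g.\ the bound $|\varphi_b(1,\hat y)|\lesssim \frac{h}{n}e^{-cn}$ in Lemma~\ref{lemma:new_lemma}. Your comparison-principle remark also deals with a gap in the paper's setup: its definition of $\varphi$ on the bounded rectangle prescribes nothing at $y=y_{\max}$, and the barrier argument works for any nonpositive comparison data there.

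The one step to phrase more carefully is $\sup|h|\lesssim\|f\|$. A point value $f(0)$ is not controlled by the $L^2$ norm of a general $f$. The bound holds here only because the data actually used lie in a finite-dimensional family (multiples of $1$, $1-\hat y$, $\hat y$), on which all norms are equivalent. The $\|f\|$ in the statement has to be read in that sense.
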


\begin{lemma}[Lemma 2,\cite{GieEtAl:13}]\label{Temam:Lemma2}
Let $\varphi$ be the corrector in the parabolic boundary layer. Then
\[
\|\varphi\|\leq \kappa \, \|f\| \,  \epsilon^{1/4},
\quad \text{ and }\quad
\|\partial_x\varphi\| + \|\partial_y\varphi\| \leq  
\kappa \, \|f\| \, \epsilon^{-1/4}.
\]
\end{lemma}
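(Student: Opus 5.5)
The plan is to treat the bottom layer at $y=0$ only; the top layer at $y=y_{\max}$ is symmetric. I will regard $\varphi$ as the solution of a backward heat equation in which $x$ plays the role of time running from $x_{\max}$ down to $0$. I assume $\varphi$ is posed on the half strip $(0,x_{\max})\times(0,\infty)$ (or localized by a smooth cutoff in $y$ away from $y=0$), so that I only need $w\to0$ as $y\to\infty$ with no condition at $y=y_{\max}$. If $\varphi$ is instead posed on $R$ with a condition at $y=y_{\max}$, a cutoff in $y$ would be needed and would contribute only \est\ terms, which I have not checked.

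\emph{$L^2$ bound.} This follows from the pointwise estimate of Lemma~\ref{Temam:Lemma1}:
\[
\|\varphi\|^2\le \kappa^2\|f\|^2\int_0^{x_{\max}}\!\!\int_0^\infty e^{-2cy/\sqrt\epsilon}\,dy\,dx\lesssim \|f\|^2\sqrt\epsilon ,
\]
which gives $\|\varphi\|\lesssim\|f\|\epsilon^{1/4}$.

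\emph{Derivative bounds.} I split $\varphi=\ell+w$, with the explicit lift $\ell(x,y):=h(x)e^{-y/\sqrt\epsilon}$.

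First I check the data. Since $h=-u^0(\cdot,0)-\gamma$ and $\gamma$ was chosen to cancel $u^0(\cdot,0)$ near $x_{\max}$, $h$ is smooth and vanishes for $x\ge\frac34x_{\max}$. Hence $\ell$ matches the boundary data $\varphi(x,0)=h(x)$ and $\varphi(x_{\max},y)=0$.

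Next I compute the equation for $w$. We have $-\epsilon\partial_y^2\ell-\partial_x\ell=-(h+h')e^{-y/\sqrt\epsilon}$. Therefore $w$ satisfies
\[
-\epsilon\partial_y^2w-\partial_xw=g:=(h+h')e^{-y/\sqrt\epsilon},\qquad w(x,0)=0,\quad w(x_{\max},y)=0 ,
\]
with $\|g\|\lesssim\|f\|\epsilon^{1/4}$ by the same integral as above.

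The lift is estimated directly: $\|\partial_x\ell\|\lesssim\epsilon^{1/4}$ and $\|\partial_y\ell\|\lesssim\epsilon^{-1/2}\epsilon^{1/4}=\epsilon^{-1/4}$.

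For $w$ I use two energy identities.
\begin{itemize}
\item Testing the equation with $-\partial_xw$ and integrating by parts in $y$ (the boundary term vanishes because $w(x,0)=0$) gives $\epsilon\int\partial_y^2w\,\partial_xw=-\frac\epsilon2\int\partial_x|\partial_yw|^2$. Since $\partial_yw(x_{\max},\cdot)=0$, this leaves the nonnegative term $\frac\epsilon2\int|\partial_yw(0,y)|^2dy$. Hence $\|\partial_xw\|^2\le\|g\|\,\|\partial_xw\|$, so $\|\partial_xw\|\lesssim\epsilon^{1/4}$.
\item Testing with $w$ gives $\epsilon\|\partial_yw\|^2+\frac12\int w(0,y)^2dy\le\|g\|\,\|w\|$. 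A one-dimensional Poincaré inequality in $x$ (using $w(x_{\max},\cdot)=0$) gives $\|w\|\lesssim\|\partial_xw\|\lesssim\epsilon^{1/4}$. Therefore $\|\partial_yw\|\lesssim\epsilon^{-1/4}$.
\end{itemize}

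Collecting the terms yields $\|\partial_x\varphi\|+\|\partial_y\varphi\|\lesssim\|f\|\epsilon^{-1/4}$. In fact the $x$-derivative is even $O(\epsilon^{1/4})$.

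The main obstacle is justifying the boundary terms in the energy identities. This requires enough regularity of $w$ up to $x=0$ and $x=x_{\max}$, and the compatibility $h=0$ near $x_{\max}$; the latter is exactly why the cutoff $\delta$ and the function $\gamma$ were introduced. I would handle the regularity by first working with smooth approximations of the data, or with the explicit heat-kernel (erfc) representation of $\varphi$, and then passing to the limit.
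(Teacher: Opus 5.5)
The paper gives no proof of this lemma. It imports it from \cite{GieEtAl:13}, where $L^2$ bounds of this type come from integrating, in $y$, pointwise exponential-decay estimates for $\varphi$ and its derivatives; those estimates are obtained from the explicit heat-kernel representation. Your bound on $\|\varphi\|$ uses exactly that mechanism. For the derivatives you take a genuinely different route, and it is correct. The lift $\ell=h(x)e^{-y/\sqrt\epsilon}$ reduces the problem to a backward heat problem for $w$ with homogeneous data and a source of size $O(\epsilon^{1/4})$ in $L^2$. Your two energy identities have the right signs and give $\|\partial_x w\|\le\|g\|$ and $\epsilon\|\partial_y w\|^2\lesssim\|g\|^2$.

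Your route needs only the pointwise bound on $\varphi$ itself, which is the only bound actually stated in Lemma~\ref{Temam:Lemma1}. It uses no explicit formula, and it gives the sharper estimate $\|\partial_x\varphi\|\lesssim\epsilon^{1/4}$. That sharper estimate is in fact what the proof of Lemma~\ref{Lemma:5} needs, since the lemma as written only gives $\epsilon^{-1/4}$ there. The pointwise route, in turn, gives exponential localization of each derivative, which the paper uses elsewhere, e.g.\ when it takes $\partial_{\hat y}\hat\varphi_\lambda|_{\hat y=1}$ to be \est\ in Lemma~\ref{Lemma:10}.

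The loose ends you flag are harmless:
\begin{itemize}
\item The regularity needed for $w$ is standard maximal parabolic regularity: zero data at $x=x_{\max}$ and at $y=0$, an $L^2$ source, and no further compatibility condition.
\item For general $f$ one only has $h(x_{\max})=h'(x_{\max})=0$, not $h\equiv0$ on $[\tfrac34x_{\max},x_{\max}]$; the latter holds in the paper's case $f=f(y)$. Your argument only uses $h(x_{\max})=0$ and $h\in H^1$, so this does not matter.
\item A $y$-cutoff $\sigma$ changes the norms only by $\|\sigma'\varphi\|$, which is \est\ by Lemma~\ref{Temam:Lemma1}.
\item Your constants depend on $\|h\|_{H^1}$, i.e.\ on the trace $f(\cdot,0)$, rather than on $\|f\|_{L^2}$. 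This is the same looseness already present in the paper's own statements.
\end{itemize}
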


\begin{lemma}[Lemma 3,\cite{GieEtAl:13}]\label{Temam:Lemma3}
Let $\theta$ be the corrector in the elliptic boundary layer. Then
\[
|\theta|\leq \kappa \, \|f\| \, \exp(-c\frac{x}{\epsilon}).
\]
\end{lemma}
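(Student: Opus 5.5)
The claim is the pointwise bound $|\theta|\le \kappa\|f\|\exp(-cx/\epsilon)$ for the ordinary boundary layer corrector. I will prove it by solving the ordinary differential equation \eqref{def:theta} explicitly in $x$, with $y$ entering only as a parameter.

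First, for fixed $y$, equation \eqref{def:theta} reads $\epsilon\theta''+\theta'=0$. Its general solution is $A(y)+B(y)e^{-x/\epsilon}$. The decay condition $\lim_{x\to\infty}\theta=0$ forces $A\equiv0$, and the boundary condition at $x=0$ fixes $B(y)=-u^0(0,y)$. Hence
\[
\theta(x,y)=-u^0(0,y)\,e^{-x/\epsilon},
\]
consistent with the formula $\theta=-x_{\max}e^{-x/\epsilon}$ stated for $f\equiv1$.

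Next, I bound the trace $u^0(0,\cdot)$ in the supremum norm. Since $u^0(0,y)=\int_0^{x_{\max}}f(s,y)\,ds$, the estimate $|u^0(0,y)|\le x_{\max}\sup|f|$ follows at once. In all the situations where the lemma is used, namely $f\equiv1$ or $f=h\hat l(\hat y)$ with $\hat l$ one of the functions $1$, $l_b$, $l_t$, the datum $f$ is bounded. Moreover, $\sup|f|$ is comparable to $\|f\|$ up to a constant depending only on the fixed rectangle $R$: for $f=f(y)$ a constant or an affine function of $y$, this is just equivalence of norms on a finite-dimensional space. This gives the claim with any $c\le1$, for instance $c=1$, and $\kappa$ depending only on $x_{\max}$ and $y_{\max}$.

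The only delicate point is controlling $\sup_y|u^0(0,y)|$ by the $L^2$ norm $\|f\|$. For a general $f\in L^2$ this fails, so the bound must be read either with $\|f\|_{L^\infty}$ in place of $\|f\|$, or, as in this paper, for the specific right-hand sides above, where the two norms are equivalent. I would state this restriction explicitly rather than try to cover arbitrary $f$.
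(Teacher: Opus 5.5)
Your proof is correct and is essentially the argument behind the cited lemma: the paper gives no proof of its own beyond citing \cite{GieEtAl:13}, but it records the same explicit formula $\theta=-u^0(0,y)e^{-x/\epsilon}$ in \eqref{eq:canceledTheta} and in Appendix~\ref{Appendix:Temam}, and the bound with $c=1$ follows from it directly. Your caveat is also well taken: for general $f\in L^2$ the trace $u^0(0,\cdot)$ is only in $L^2$, so a factor $\|f\|$ is justified only for right-hand sides with $\sup|f|\lesssim\|f\|$, such as the constant and $y$-affine data that the paper actually uses.
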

\begin{lemma}[Lemma 5,\cite{GieEtAl:13}]\label{Temam:Lemma5}
Let $\zeta$ be the corrector in the corner layer. Then
\[
|\zeta|\leq \kappa \, \|f\| \, \exp(-c\left(\frac{x}{\epsilon}+\frac{y}{\sqrt{\epsilon}}\right)).
\]
\end{lemma}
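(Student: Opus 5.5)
The statement to prove is Lemma~\ref{Temam:Lemma5}: the pointwise bound $|\zeta|\le \kappa\|f\|\exp(-c(x/\epsilon+y/\sqrt{\epsilon}))$ for the corner corrector. The plan is to use the explicit formula for $\zeta$ obtained from its defining ODE and then combine the known bounds on $\varphi$.

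\textbf{Step 1: the explicit formula.} The defining problem $-\epsilon\partial_x^2\zeta=\partial_x\zeta$, with $\zeta\to0$ as $x\to\infty$, is an ODE in $x$ with $y$ as a parameter. Its only bounded decaying solution is a multiple of $e^{-x/\epsilon}$. Imposing the boundary datum $\zeta(0,y)=-\varphi(0,y)$ (which is what the condition $\zeta(0,0)=-\varphi(0,y)$ must mean) gives, as already noted in the appendix,
\[
\zeta(x,y)=-\varphi(0,y)\,e^{-x/\epsilon}.
\]

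\textbf{Step 2: apply Lemma~\ref{Temam:Lemma1} on the outflow line.} Lemma~\ref{Temam:Lemma1} gives $|\varphi(x,y)|\le\kappa\|f\|\exp(-c\,y/\sqrt{\epsilon})$ uniformly in $x$. In particular it holds at $x=0$, so
\[
|\zeta(x,y)|\le \kappa\|f\|\exp(-c\,y/\sqrt{\epsilon})\,e^{-x/\epsilon}.
\]
Replacing $c$ by $\min\{c,1\}$ merges the two exponentials into $\exp(-c(x/\epsilon+y/\sqrt{\epsilon}))$, which is the claim.

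\textbf{Main obstacle.} The one delicate point is justifying that the uniform-in-$x$ bound of Lemma~\ref{Temam:Lemma1} really applies at the boundary value $x=0$. This requires $\varphi$ to be continuous up to $x=0$, which holds since $\varphi$ solves a parabolic problem with $x$ playing the role of reversed time, starting from $x=x_{\max}$. It also requires the constant in Lemma~\ref{Temam:Lemma1} not to depend on $x$.

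For that check I would verify the decay directly from the heat-kernel representation of $\varphi$. The boundary data $h(x)$ is bounded by $\kappa\|f\|$, and the Gaussian kernel $y\,(x'-x)^{-3/2}\exp(-y^2/(4\epsilon(x'-x)))$ is bounded by $\exp(-c\,y/\sqrt{\epsilon})$ uniformly in $0\le x<x'\le x_{\max}$, since $x'-x\le x_{\max}$.

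With this verified, the lemma follows immediately.
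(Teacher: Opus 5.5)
Your proof is correct and is essentially the argument behind this lemma. The paper gives no proof of its own and simply cites Gie--Jung--Temam, but it uses the same reasoning in the proof of Lemma~\ref{Temam:zetaBubble}: the explicit formula $\zeta(x,y)=-\varphi(0,y)e^{-x/\epsilon}$, combined with the pointwise bound of Lemma~\ref{Temam:Lemma1} on the outflow line, then replacing $c$ by $\min\{c,1\}$.

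Your ``main obstacle'' check is unnecessary, since Lemma~\ref{Temam:Lemma1} already gives the bound uniformly in $x$, and $x=0$ is just the terminal time of the reversed-time heat problem. As written the check is also inaccurate: the kernel $y\,t^{-3/2}e^{-y^2/(4\epsilon t)}$ is not pointwise bounded by $e^{-cy/\sqrt{\epsilon}}$, because at $t=y^2/\epsilon$ it is of size $\epsilon^{3/2}/y^2$. What is true is that its normalized time integral equals $\mathrm{erfc}\big(y/(2\sqrt{\epsilon x_{\max}})\big)\le e^{-y^2/(4\epsilon x_{\max})}\lesssim e^{-cy/\sqrt{\epsilon}}$.
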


\begin{lemma}[Lemma 6,\cite{GieEtAl:13}]\label{Temam:Lemma6}
Let $\zeta$ be the corrector in the corner layer. Then
\[
\|\zeta\|\leq \kappa \, \|f\| \, \epsilon^{3/4},
\quad \text{ and }\quad
\|\partial_x\zeta\| + \|\partial_y\zeta\| \leq  
\kappa \, \|f\| \,\epsilon^{-1/4}.
\]
\end{lemma}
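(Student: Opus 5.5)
The statement to prove is Lemma~\ref{Temam:Lemma6}: $\|\zeta\|\lesssim\epsilon^{3/4}$ and $\|\partial_x\zeta\|+\|\partial_y\zeta\|\lesssim\epsilon^{-1/4}$. I would work from the explicit representation $\zeta(x,y)=-\varphi(0,y)e^{-x/\epsilon}$, which follows from the defining ODE in $x$. This is a tensor product of a $y$-profile and an $x$-profile, so every norm factorizes, e.g.
\[
\|\zeta\|^2=\int_0^{x_{\max}}e^{-2x/\epsilon}\,dx\int_0^{y_{\max}}|\varphi(0,y)|^2\,dy .
\]
The plan is to reduce all three bounds to one-dimensional bounds on the trace $g(y):=\varphi(0,y)$ and its derivative, and then combine these with elementary exponential integrals.

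The $x$-factors are explicit:
\[
\int_0^{x_{\max}}e^{-2x/\epsilon}\,dx\le \epsilon/2,
\qquad
\int_0^{x_{\max}}\big|\partial_x e^{-x/\epsilon}\big|^2\,dx\le \frac{1}{2\epsilon}.
\]

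For the $y$-factor I would use the pointwise bound of Lemma~\ref{Temam:Lemma1} at $x=0$, namely $|g(y)|\lesssim \|f\|\exp(-cy/\sqrt\epsilon)$. Integrating gives $\|g\|_{L^2(0,y_{\max})}^2\lesssim\sqrt\epsilon$. Combining:
\begin{itemize}
\item $\|\zeta\|^2\lesssim \epsilon\cdot\sqrt\epsilon$, hence $\|\zeta\|\lesssim\epsilon^{3/4}$;
\item $\|\partial_x\zeta\|^2\lesssim \epsilon^{-1}\sqrt\epsilon$, hence $\|\partial_x\zeta\|\lesssim\epsilon^{-1/4}$.
\end{itemize}

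The remaining term $\partial_y\zeta=-g'(y)e^{-x/\epsilon}$ is the main obstacle. It needs $\|g'\|_{L^2(0,y_{\max})}^2\lesssim\epsilon^{-1}$, that is, a trace bound on $\partial_y\varphi$ at $x=0$. Lemma~\ref{Temam:Lemma2} only controls $\partial_y\varphi$ in the 2D $L^2$ norm, so it cannot be used directly.

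I would obtain the trace bound from the explicit heat-kernel structure of $\varphi$. The equation $-\epsilon\partial_y^2\varphi-\partial_x\varphi=0$ is a backward heat equation in the time-like variable $x$, with boundary data $h(x)$ at $y=0$. Its solution is a Duhamel integral of $h$ against the kernel
\[
\frac{y}{\sqrt{4\pi\epsilon}\,s^{3/2}}\,e^{-y^2/(4\epsilon s)},
\qquad s=x'-x .
\]
Differentiating in $y$ produces a factor of size $\epsilon^{-1/2}$ times a function of $y/\sqrt\epsilon$ with exponential decay, after integrating by parts in $s$ using the smoothness of $h$. This gives $|g'(y)|\lesssim\epsilon^{-1/2}e^{-cy/\sqrt\epsilon}$, and therefore $\|g'\|^2\lesssim\epsilon^{-1}\sqrt\epsilon=\epsilon^{-1/2}$. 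The resulting estimate is then
\[
\|\partial_y\zeta\|^2\lesssim \epsilon\cdot\epsilon^{-1/2}=\epsilon^{1/2},
\]
which is far better than the required $\epsilon^{-1/2}$.

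Alternatively, one can cite the pointwise derivative estimates in \cite{GieEtAl:13} that accompany their Lemma~1. Either route suffices, since the bound needed for $\partial_y\zeta$ has a lot of room.
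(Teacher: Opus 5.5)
The paper gives no proof of this lemma; it is quoted verbatim from \cite{GieEtAl:13}. Your proposal is therefore a self-contained replacement for a citation, and it is correct.

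The factorization of $\zeta=-\varphi(0,y)e^{-x/\epsilon}$, together with the pointwise bound of Lemma~\ref{Temam:Lemma1} at $x=0$, gives $\|\zeta\|\lesssim\epsilon^{3/4}$ and $\|\partial_x\zeta\|\lesssim\epsilon^{-1/4}$ exactly as you write. Your heat-kernel step for $g'$ also goes through. Note that $K(s,y)=\partial_s\,\mathrm{erfc}\big(y/\sqrt{4\epsilon s}\big)$, and the boundary terms vanish: $\mathrm{erfc}\to 0$ as $s\to 0^+$ for $y>0$, and $h(x_{\max})=0$ because $u^0$ and $\gamma$ vanish at $x=x_{\max}$. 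One then gets
\[
g'(y)=\int_0^{x_{\max}} h'(s)\,(\pi\epsilon s)^{-1/2}\,e^{-y^2/(4\epsilon s)}\,ds,
\qquad
|g'(y)|\le \frac{2\sqrt{x_{\max}}}{\sqrt{\pi\epsilon}}\,\sup|h'|\,e^{-y^2/(4x_{\max}\epsilon)},
\]
so indeed $\|g'\|^2\lesssim\epsilon^{-1/2}$. If a cut-off in $y$ is used to localize $\varphi$, it only adds exponentially small terms.

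Compared with the bare citation, your route makes the powers transparent. The $\epsilon^{1/2}$ comes from $\int_0^{x_{\max}}e^{-2x/\epsilon}\,dx$ and the $\epsilon^{1/4}$ from the $y$-trace. It also shows that $\|\partial_y\zeta\|=O(\epsilon^{1/4})$, so the $\epsilon^{-1/4}$ in the statement is driven entirely by $\partial_x\zeta$. The cost is the extra trace estimate for $\partial_y\varphi$ on $\{x=0\}$, which, as you correctly observe, Lemma~\ref{Temam:Lemma2} cannot supply.

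Two minor points. First, what is actually required of $g'$ is only $\|g'\|^2\lesssim\epsilon^{-3/2}$, not $\epsilon^{-1}$, since $\|\partial_y\zeta\|^2\le\frac{\epsilon}{2}\|g'\|^2$. Second, your constant involves $\sup|h'|$ for the boundary datum $h$ of $\varphi$ rather than $\|f\|$. This is harmless in the cases the paper uses ($f\equiv1$ and the scaled bubble right-hand sides), where that datum scales linearly with $f$.
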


\begin{lemma}[Eq. 35,\cite{GieEtAl:13}]\label{Temam:eq35}
Let $\xi$ be the corrector in the corner layer. Then
\[
\|\xi\|_\epsilon\leq \kappa \, \|f\| \, \epsilon^{1/4}.
\]
\end{lemma}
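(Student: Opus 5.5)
The plan is to split off an explicit lifting of the boundary datum and then bound the homogeneous-boundary remainder with the coercivity of Lemma~\ref{Lemma:Coercivity}, applied on $R$ instead of $\Omega$. Recall the data of $\xi$. It vanishes on $x=0$, on $x=x_{\max}$ and on $y=y_{\max}$, and it equals $\gamma(x)$ on $y=0$. The function $\gamma(x)=f(x_{\max},0)(x-x_{\max})\delta(x)$ is smooth and supported in $x\ge \tfrac12 x_{\max}$. It vanishes at $x=x_{\max}$, and it satisfies $|\gamma|+|\gamma'|+|\gamma''|\lesssim \|f\|$.

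\textbf{Lifting.} Fix a smooth cutoff $\chi(y)$ with $\chi=1$ near $y=0$ and $\chi=0$ near $y=y_{\max}$, and set
\[
g(x,y):=\gamma(x)\,e^{-y/\sqrt{\epsilon}}\,\chi(y).
\]
By construction $g$ has the same boundary values as $\xi$ on all of $\partial R$. A direct computation, using $\int_0^\infty e^{-2y/\sqrt\epsilon}\,dy\sim\sqrt\epsilon$, gives
\[
\|g\|\lesssim \|f\|\epsilon^{1/4},\qquad \|\partial_x g\|\lesssim\|f\|\epsilon^{1/4},\qquad \|\partial_y g\|\lesssim \|f\|\epsilon^{-1/2}\epsilon^{1/4}=\|f\|\epsilon^{-1/4}.
\]
Hence $\|g\|_\epsilon\lesssim \|f\|\epsilon^{1/4}$.

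\textbf{Remainder.} Set $w:=\xi-g\in H^1_0(R)$. Then $-\epsilon\Delta w-\partial_x w = F:=\epsilon\Delta g+\partial_x g$. The key point is that each term of $F$ is only of size $\epsilon^{1/4}$ in $L^2$. Indeed,
\[
\epsilon\,\partial_{yy}\big(e^{-y/\sqrt\epsilon}\big)=e^{-y/\sqrt\epsilon},
\]
so $\epsilon\partial_{yy}g=\gamma e^{-y/\sqrt\epsilon}\chi$ plus cutoff terms. These cutoff terms are supported away from $y=0$, so they are exponentially small in $\epsilon$. The remaining terms $\epsilon\gamma'' e^{-y/\sqrt\epsilon}\chi$ and $\partial_x g=\gamma' e^{-y/\sqrt\epsilon}\chi$ are $O(\epsilon^{1/4})$ in $L^2$ by the same exponential integral. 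Therefore $\|F\|\lesssim\|f\|\epsilon^{1/4}$.

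\textbf{Energy estimate.} Test the weak form of the $w$-equation (the form $a(\cdot,\cdot)$ on $R$ with weight $e^{x}$, since $\aaa=(-1,0)$) with $w$ itself. The coercivity argument of Lemma~\ref{Lemma:Coercivity} goes through verbatim on $R$, with constants depending only on $x_{\max}$ and $y_{\max}$. It gives
\[
\|w\|_\epsilon^2\lesssim a(w,w)=\int_R F\,w\,e^{x}\lesssim \|F\|\,\|w\|\le \|F\|\,\|w\|_\epsilon ,
\]
so $\|w\|_\epsilon\lesssim \|f\|\epsilon^{1/4}$. The triangle inequality then yields $\|\xi\|_\epsilon\le\|w\|_\epsilon+\|g\|_\epsilon\lesssim \kappa\|f\|\epsilon^{1/4}$.

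\textbf{Main obstacle.} The delicate step is the bound on $\|F\|$. A naive lifting would produce a term $\epsilon\partial_{yy}g$ of size $\epsilon^{-1/4}$ or worse. The lifting must therefore use the parabolic scale $\sqrt\epsilon$ in $y$, so that $\epsilon\partial_{yy}$ costs nothing. One must also check that $g$ really matches the Dirichlet data at the corners $(0,0)$ and $(x_{\max},0)$. This holds because $\delta$ vanishes for $x\le \tfrac12 x_{\max}$ and because of the factor $(x-x_{\max})$ in $\gamma$; this is what guarantees $w\in H^1_0(R)$.
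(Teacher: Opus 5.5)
Your proof is correct, and it takes a genuinely different route from the paper. The paper gives no argument for this lemma; it simply imports the bound from \cite[Eq.~35]{GieEtAl:13}. You derive it directly instead.

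Your lifting $g=\gamma(x)e^{-y/\sqrt{\epsilon}}\chi(y)$ sits exactly on the parabolic scale. Because $\epsilon\,\partial_{yy}e^{-y/\sqrt{\epsilon}}=e^{-y/\sqrt{\epsilon}}$, both $\|g\|_\epsilon$ and the residual $F=\epsilon\Delta g+\partial_x g$ are $O(\epsilon^{1/4})$. The weighted coercivity of Lemma~\ref{Lemma:Coercivity} then converts $\|F\|$ into a bound for $\|\xi-g\|_\epsilon$; it needs no Poincar\'e inequality, only $|\mathbf{a}|=1$ and $\epsilon\le\epsilon_0<1$.

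Your route buys three things:
\begin{itemize}
\item It uses only tools already present in the paper.
\item It shows the bound is linear in the datum $\gamma$, hence in $f(x_{\max},0)$. This is exactly what Lemma~\ref{Temam:BubbleCorrectorsBound} relies on to extract the factor $h$ from the right-hand side $h\,\hat{l}(\hat{y})$ of the scaled bubble problems.
\item It is insensitive to the boundary condition at $y=y_{\max}$, which the paper leaves unstated. If $\xi$ also carries the top-corner data, add the mirrored lifting; if $\xi$ is posed on a half-strip, drop $\chi$.
\end{itemize}
What the citation provides beyond this is the finer pointwise description of $\xi$ (via \cite{ShihKellogg:87}). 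The paper needs that separately in Lemma~\ref{Temam:Ap}, but it is not required for the present norm bound.

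One caveat. $\gamma$ contains the point value $f(x_{\max},0)$, which the $L^2$ norm of $f$ does not control. So your inequality $|\gamma|+|\gamma'|+|\gamma''|\lesssim\|f\|$, like the statement itself, requires reading $\|f\|$ as a norm that controls point values, e.g.\ the maximum norm. This is harmless in the paper's applications, where $f\equiv 1$ or $f=h\,\hat{l}(\hat{y})$.
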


\subsection{Results for the scaled bubbles on the reference domains}

Let $\psi_\lambda$ be an element- or a patch-bubble with support $\Omega_\lambda$. Let $\hat{\psi}_\lambda$ be the scaled bubble to the reference domain $\hat \Omega_\lambda$,
\[
\hat \Omega_\lambda = 
\begin{cases}
(0,1)\times(0,1), \quad&\text{if $\Omega_\lambda$ is an element,} \\
(0,2)\times(0,1), \quad&\text{if $\Omega_\lambda$ is a patch corresponding to a vertical edge,}\\
(0,1)\times(0,2), \quad&\text{if $\Omega_\lambda$ is a patch corresponding to a horizontal edge.}
\end{cases}
\]
From~\eqref{eq:defHAtPsi},  $\hat{\psi}$ satisfies
\[
-\hat{\epsilon} \hat{\Delta} \hat{\psi} - \partial_{\hat{x}}\hat{\psi} = h \, \hat l(\hat y)\quad \text{in }\hat{\Omega}_\lambda,\qquad
   \hat{\psi} = 0\quad\text{on }\partial\hat{\Omega}_\lambda,
\]
with $\hat\epsilon = \epsilon/h$, and 
\[
\hat l(\hat y) = 
\begin{cases}
1-\hat y, \quad&\text{if $\lambda \in \Lambda_B$}, \\
\hat y, \quad&\text{if $\lambda \in \Lambda_T$}, \\
1, \quad&\text{otherwise}.
\end{cases}
\]

\begin{lemma}\label{Temam:BubbleCorrectorsBound}
Let $\psi_\lambda$ be a bubble, and let $\chi \in\{\varphi_\lambda,\xi_\lambda,\zeta_\lambda\}$ be one of its correctors. Then
\[
\|\hat{\chi}\|_{\hat{\epsilon},\hat{\Omega}_\lambda}\lesssim h \, \hat{\epsilon}^{1/4},
\quad \text{ and }\quad
\epsilon \|\nabla\chi\|_{{\Omega}_\lambda}^2 + \frac{1}{h}\|\chi\|_{{\Omega}_\lambda}^2 \lesssim h^3 \,(\epsilon/h)^{1/2}.
\]
\end{lemma}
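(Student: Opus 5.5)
The plan is to transfer the physical-domain bounds of Lemmas~\ref{Temam:Lemma2}, \ref{Temam:Lemma6} and \ref{Temam:eq35} to the reference configuration, and then scale back. By \eqref{eq:defHAtPsi}, $\hat\psi_\lambda$ solves a problem of the form \eqref{Eq:problem} on the rectangle $\hat\Omega_\lambda$, with diffusion $\hat\epsilon$ in place of $\epsilon$ and right-hand side $\hat f = h\,\hat l(\hat y)$. Here $x_{\max},y_{\max}\in\{1,2\}$, so the constants $\kappa$ are uniform. The correctors $\hat\chi$ are defined exactly as in the Appendix for this data, and every bound there is linear in $\|\hat f\|$. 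Since $\|\hat f\|_{L^2(\hat\Omega_\lambda)}\lesssim h$, applying those lemmas with $\epsilon\to\hat\epsilon$ gives:
\begin{itemize}
\item for $\hat\varphi$: $\|\hat\varphi\|\lesssim h\hat\epsilon^{1/4}$ and $\|\hat\nabla\hat\varphi\|\lesssim h\hat\epsilon^{-1/4}$;
\item for $\hat\zeta$: $\|\hat\zeta\|\lesssim h\hat\epsilon^{3/4}$ and $\|\hat\nabla\hat\zeta\|\lesssim h\hat\epsilon^{-1/4}$;
\item for $\hat\xi$: $\|\hat\xi\|_{\hat\epsilon}\lesssim h\hat\epsilon^{1/4}$.
\end{itemize}
In each case $\|\hat\chi\|_{\hat\epsilon,\hat\Omega_\lambda}^2=\hat\epsilon\|\hat\nabla\hat\chi\|^2+\|\hat\chi\|^2\lesssim h^2\hat\epsilon^{1/2}$, which is the first claim.

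Before doing this I need to check that the hypotheses of the cited results hold for the reference data. The right-hand side $\hat l$ depends only on $\hat y$, so $u^0=\hat f(\hat y)(x_{\max}-\hat x)$ and the auxiliary functions $\gamma$, $h(\cdot)$ have the explicit forms used in \cite{GieEtAl:13}. For $\hat l=1-\hat y$ or $\hat l=\hat y$ the data vanish at one of the corners. I expect this to be harmless, since it only makes some correctors vanish or become smaller. If necessary, I would treat the top and bottom layers separately, using the symmetric definitions.

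For the second claim I would scale back with $\Phi_{i,j}$. In two dimensions $\|\chi\|_{\Omega_\lambda}^2=h^2\|\hat\chi\|^2$ and $\|\nabla\chi\|_{\Omega_\lambda}^2=\|\hat\nabla\hat\chi\|^2$. Hence
\[
\epsilon\|\nabla\chi\|^2_{\Omega_\lambda}+\tfrac1h\|\chi\|^2_{\Omega_\lambda}
= h\big(\hat\epsilon\|\hat\nabla\hat\chi\|^2+\|\hat\chi\|^2\big)
= h\|\hat\chi\|^2_{\hat\epsilon,\hat\Omega_\lambda}\lesssim h^3\hat\epsilon^{1/2}.
\]

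The main obstacle is the justification that the results of \cite{GieEtAl:13} apply uniformly on the reference domains. This requires that $\hat\epsilon\le\hat\epsilon_0$ plays the role of $\epsilon_0$, and that the dependence on the data is exactly through $\|\hat f\|$ (and not, for example, through higher norms of $\hat f$, which are still $O(h)$ here because $\hat l$ is affine). The remaining care is with the elliptic corrector $\xi$ for the non-constant $\hat l$. I would first try to reprove the bound of Lemma~\ref{Temam:eq35} via the energy estimate for $-\hat\epsilon\hat\Delta\xi-\partial_{\hat x}\xi=0$, lifting its boundary datum $\gamma$, which is $O(h)$ and smooth. That estimate yields $\|\hat\xi\|_{\hat\epsilon}\lesssim h\hat\epsilon^{1/4}$ once the lift is chosen to be supported in a strip of width $\hat\epsilon^{1/2}$.
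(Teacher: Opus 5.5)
Your proposal is correct and is essentially the paper's proof. You apply Lemmas \ref{Temam:Lemma2}, \ref{Temam:Lemma6} and \ref{Temam:eq35} on $\hat\Omega_\lambda$, with $\hat\epsilon$ and data $h\,\hat l$, to get $\|\hat\chi\|_{\hat\epsilon,\hat\Omega_\lambda}\lesssim h\hat\epsilon^{1/4}$, and then scale back via $\epsilon\|\nabla\chi\|_{\Omega_\lambda}^2+\frac1h\|\chi\|_{\Omega_\lambda}^2=h\|\hat\chi\|_{\hat\epsilon,\hat\Omega_\lambda}^2$. Your extra checks make explicit what the paper leaves implicit: uniformity over the finitely many reference rectangles and right-hand sides, the role of $\hat\epsilon\le\hat\epsilon_0$, and that the data enter linearly, so every norm of $h\,\hat l$ is $O(h)$.
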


\begin{proof}
We apply Lemmas \ref{Temam:Lemma2}, \ref{Temam:Lemma6}, and \ref{Temam:eq35}, observing that the right-hand side of the equation has a factor $h$:
\[
\|\hat{\chi}\|_{\hat{\epsilon},\hat{\Omega}_\lambda
}\leq \kappa h \hat{\epsilon}^{1/4}.
\]

Scaling back and using $\hat{\epsilon}=\epsilon/h$ yields the desired results.
\end{proof}

\begin{lemma}[App. B \cite{GieEtAl:13}, Thm. 3.6 \cite{ShihKellogg:87}]\label{Temam:Ap}
Let $\lambda \in \Lambda$ and let $\xi_\lambda$ be an elliptic corner corrector of the bubble $\psi_\lambda$. Then 
$$
|\hat{\xi}_\lambda(\hat{x},\hat{y})| \lesssim h\exp(-c(\sqrt{X^2 + Y^2} -X)),
$$
with $X=(\hat x_{\text{max}}-\hat{x})/2\hat{\epsilon}$, $Y=\hat{y}/2\hat{\epsilon}$ so that
\begin{equation}\label{Eq:Xi}
\|\hat{\xi}\|_{\hat{\Omega}_\lambda} \lesssim h\,\hat{\epsilon}.
\end{equation}
\end{lemma}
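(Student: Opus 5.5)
The plan is to reduce everything to the unscaled setting of \cite{GieEtAl:13} on the reference rectangle $\hat\Omega_\lambda$, with $\epsilon$ replaced by $\hat\epsilon$, and to track the factor $h$ by linearity.

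\emph{Step 1: extract the factor $h$.} By \eqref{eq:defHAtPsi}, $\hat\psi_\lambda$ solves the model problem on $\hat\Omega_\lambda$ with diffusion $\hat\epsilon$ and right-hand side $h\,\hat l(\hat y)$. Every object in the expansion depends linearly on the data: $u^0$, then $\gamma$ and the boundary datum, and finally $\xi_\lambda$. Hence $\hat\xi_\lambda = h\,\tilde\xi$, where $\tilde\xi$ is the elliptic corner corrector for the right-hand side $\hat l$. Since $0\le\hat l\le 1$, the corresponding $\tilde\gamma(\hat x)=\hat l(0)(\hat x-\hat x_{\max})\delta(\hat x)$ is bounded independently of $h$ and $\hat\epsilon$. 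It is supported in $\hat x\ge \hat x_{\max}/2$ and vanishes linearly at the corner $(\hat x_{\max},0)$.

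\emph{Step 2: pointwise bound.} For $\tilde\xi$, which solves $-\hat\epsilon\hat\Delta\tilde\xi-\partial_{\hat x}\tilde\xi=0$ with datum $\tilde\gamma$ on $\hat y=0$ and zero on the vertical sides, I will apply the barrier/maximum-principle argument of \cite[Thm.~3.6]{ShihKellogg:87}, as reproduced in \cite[App.~B]{GieEtAl:13}. The substitution $\tilde\xi=e^{-\hat x/(2\hat\epsilon)}w$ turns the operator into a modified Helmholtz operator in the stretched variables $X,Y$. Its decaying radial solution, multiplied back by the exponential, yields a supersolution comparable to $\exp(-c(\sqrt{X^2+Y^2}-X))$. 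Comparing on $\partial\hat\Omega_\lambda$, where this barrier dominates $|\tilde\gamma|$ up to a constant, gives the pointwise estimate. The constant depends only on $\|\tilde\gamma\|_{W^{1,\infty}}$ and on $\hat x_{\max},\hat y_{\max}\in\{1,2\}$. Multiplying by $h$ gives the first claim.

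\emph{Step 3: the $L^2$ bound \eqref{Eq:Xi}.} This is the main obstacle. Integrating the square of the pointwise barrier naively, with $dx\,dy=4\hat\epsilon^2\,dX\,dY$ and $r-X\approx Y^2/(2X)$ near the $X$-axis, only produces $\|\tilde\xi\|\lesssim\hat\epsilon^{1/4}$. That is not enough, so the sharper bound must come from the structure of the data. I would first try to reproduce the $L^2$ estimate of $\xi$ established in \cite[App.~B]{GieEtAl:13}, which is of order $\epsilon$ for $n=0$. That argument is an energy estimate applied to $\tilde\xi$ minus a lifting of $\tilde\gamma$, namely $\tilde\gamma(\hat x)$ times a cutoff in $\hat y$. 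The key input is that $\tilde\gamma$ vanishes at the outflow corner and solves the reduced equation away from it. The residual of the lifting is then of size $\hat\epsilon$ in $L^2$, and coercivity (as in Lemma~\ref{Lemma:Coercivity}) gives $\|\tilde\xi\|\lesssim\hat\epsilon$. If that route fails, the fallback is to refine the barrier by a factor $\min\{1,\hat\epsilon(X+1)\}$ reflecting the linear vanishing of $\tilde\gamma$, and to integrate more carefully. Finally, scaling back, $\|\hat\xi_\lambda\|_{\hat\Omega_\lambda}=h\|\tilde\xi\|\lesssim h\hat\epsilon$, which is \eqref{Eq:Xi}.
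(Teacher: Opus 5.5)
\textbf{Where you agree with the paper.} Your Steps 1 and 2 are sound and match what the paper does. The paper gives no argument beyond citing Gie--Jung--Temam (App.~B) and Shih--Kellogg (Thm.~3.6) for the pointwise bound. It then states \eqref{Eq:Xi} as a direct consequence (``so that''). Your computation at the start of Step 3 is correct, and it shows that this consequence does not follow. The pointwise bound is of order $h$ on the set $\{\hat y\lesssim(\hat\epsilon(\hat x_{\max}-\hat x))^{1/2}\}$, whose area is of order $\hat\epsilon^{1/2}$. Integrating it therefore gives only $\|\hat\xi_\lambda\|_{\hat\Omega_\lambda}\lesssim h\hat\epsilon^{1/4}$.

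\textbf{The gap.} No repair can close Step 3, because \eqref{Eq:Xi} is false for $\xi_\lambda$ as defined in Appendix~A.

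\emph{Why the bound fails.} On the bottom edge, $\hat\xi_\lambda(\hat x,0)=\hat\gamma(\hat x)=h\,\hat l(0)(\hat x-\hat x_{\max})$ for $\hat x\in(\frac34\hat x_{\max},\hat x_{\max})$. Hence $\|\hat\xi_\lambda(\cdot,0)\|\gtrsim h$ whenever $\hat l(0)\neq 0$, e.g.\ for $\lambda\in\Lambda_B\cup\Lambda_L$. For $0<\delta\le 1$ one has $\|v(\cdot,0)\|^2\le \frac{2}{\delta}\|v\|^2+\delta\|\partial_{\hat y}v\|^2$. Optimizing over $\delta$ gives $\|v(\cdot,0)\|^2\le 3\big(\|v\|\,\|\partial_{\hat y}v\|+\|v\|^2\big)$. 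Lemma~\ref{Temam:BubbleCorrectorsBound} gives $\|\hat\nabla\hat\xi_\lambda\|\lesssim h\hat\epsilon^{-1/4}$. Combining these forces $\|\hat\xi_\lambda\|\gtrsim h\hat\epsilon^{1/4}$, so the rate $\hat\epsilon^{1/4}$ is sharp.

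\emph{Why your lifting argument breaks.} The lifting $G=\tilde\gamma(\hat x)\chi(\hat y)$ does not satisfy the reduced equation. For $\hat x>\frac34\hat x_{\max}$ one has $\partial_{\hat x}G=\tilde\gamma'\chi=\hat l(0)\chi(\hat y)$, so the residual is of order one, not $O(\hat\epsilon)$. The claim that $\tilde\gamma$ solves the reduced equation away from the corner is simply not true. Even with a small residual, coercivity would only control $\tilde\xi-G$, and $\|G\|$ is not $O(\hat\epsilon)$.

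\emph{Why your fallback breaks.} The factor satisfies $\hat\epsilon(X+1)=(\hat x_{\max}-\hat x)/2+\hat\epsilon$, which is of order one at distance $O(1)$ from the corner. That is exactly where the dominant contribution $\int^{1/\hat\epsilon}\sqrt{X}\,dX$ comes from, so nothing is gained.

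\textbf{What is actually provable.} Your Steps 1--2 establish $\|\hat\xi_\lambda\|_{\hat\Omega_\lambda}\lesssim h\hat\epsilon^{1/4}$. Note that, as written, the proof of Lemma~\ref{Lemma:11} relies on the stronger rate \eqref{Eq:Xi}.
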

\begin{lemma}\label{Temam:zetaBubble}
Let $\lambda \in \Lambda$ and let $\zeta
_\lambda$ be an ordinary corner corrector of the bubble $\psi_\lambda$. Then
$$
\|\hat{\zeta}_\lambda e^x\|_{\hat{\Omega}_\lambda} \lesssim h \,\hat{\epsilon}^{3/4},
\quad \text{ and }\quad
\|\hat{y}\hat{\zeta}_\lambda e^x\|_{\Gammaleft} \lesssim h \,\hat{\epsilon}^{3/4},
$$
where \(\Gammaleft := \left\{ (\hat{x}, \hat{y}) \in \partial \hat{\Omega}_\lambda \,:\, \hat{x} = \hat x_{\min} \right\},
\)
with $\hat x_{\min}$ denoting the minimal $\hat{x}$-coordinate of $\hat{\Omega}_\lambda$.
\end{lemma}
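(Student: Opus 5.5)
The plan is to work entirely on the reference configuration. There the bubble satisfies the scaled problem from the preceding subsection: diffusion $\hat\epsilon=\epsilon/h$, right-hand side $h\,\hat l(\hat y)$, and $\hat y\in(0,y_{\max})$. The corner corrector is explicit:
\[
\hat\zeta_\lambda(\hat x,\hat y)=-\hat\varphi_\lambda(\hat x_{\min},\hat y)\,e^{-(\hat x-\hat x_{\min})/\hat\epsilon}.
\]
The derivation in the appendix gives $\zeta=-\varphi(0,y)e^{-x/\epsilon}$; shifting to $\hat x_{\min}$ is harmless. In both estimates the weight $e^x=e^{x_i+h\hat x}$ is bounded above and below uniformly on $\Omega$, so it can be dropped. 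The whole question then reduces to pointwise bounds on the trace $g(\hat y):=\hat\varphi_\lambda(\hat x_{\min},\hat y)$.

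For $g$ I will invoke Lemma~\ref{Temam:Lemma1} applied to the scaled problem, with $\epsilon$ replaced by $\hat\epsilon$ and $f$ replaced by $h\hat l$. Since $\|h\hat l\|_{L^2(\hat\Omega_\lambda)}\lesssim h$, this gives
\[
|g(\hat y)|\lesssim h\,e^{-c\hat y/\sqrt{\hat\epsilon}},\qquad |\hat y\, g(\hat y)|\lesssim h\,\hat\epsilon^{1/2}e^{-c\hat y/\sqrt{\hat\epsilon}}.
\]
The constant $\kappa$ depends only on $x_{\max},y_{\max}\in\{1,2\}$, so it is uniform over all $\lambda$. If the top parabolic layer is active, it is handled symmetrically with $y_{\max}-\hat y$, and only its own corner contributes near that corner.

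For the first estimate, I square and separate variables:
\[
\|\hat\zeta_\lambda\|^2_{\hat\Omega_\lambda}\le \int_0^{y_{\max}}|g|^2\,d\hat y\int_{\hat x_{\min}}^{\infty}e^{-2(\hat x-\hat x_{\min})/\hat\epsilon}\,d\hat x .
\]
The $x$-integral is $\lesssim\hat\epsilon$. The $y$-integral is $\lesssim h^2\int_0^\infty e^{-2c\hat y/\sqrt{\hat\epsilon}}\,d\hat y\lesssim h^2\hat\epsilon^{1/2}$. Hence $\|\hat\zeta_\lambda\|^2\lesssim h^2\hat\epsilon^{3/2}$, that is, $\|\hat\zeta_\lambda e^x\|\lesssim h\hat\epsilon^{3/4}$.

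For the trace estimate on $\Gammaleft$, the exponential in $\hat x$ equals $1$, so $\hat y\hat\zeta_\lambda=-\hat y g(\hat y)$. Using the second pointwise bound,
\[
\|\hat y g\|^2_{L^2(0,y_{\max})}\lesssim h^2\hat\epsilon\int_0^\infty e^{-2c\hat y/\sqrt{\hat\epsilon}}\,d\hat y\lesssim h^2\hat\epsilon^{3/2},
\]
which yields $h\hat\epsilon^{3/4}$. If $\hat l\equiv1$ (the $\Lambda_L$ bubbles), the corrector on the relevant corners is handled identically; the bound is then only used where Lemma~\ref{Lemma:12} needs it.

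The main obstacle is justifying that the pointwise bounds of Lemma~\ref{Temam:Lemma1} hold for the scaled data with the factor $h$ extracted linearly. This requires that the constant depends on the data only through a norm of $f$ and of $\hat l$, which is linear with bounded derivatives. I would argue this by linearity of $\varphi$ in $f$: the boundary datum $h(x)=-u^0(x,0)-\gamma(x)$ scales linearly in $f$. I would then apply the cited estimate to $\hat l$ and multiply by $h$. If that lemma's $\|f\|$ really hides higher norms of $f$, these are still $O(1)$ for $\hat l\in\{1,\hat y,1-\hat y\}$.
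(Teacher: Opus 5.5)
Your proposal is correct. For the trace estimate on $\Gammaleft$ it follows the paper's proof. You use $\hat\zeta_\lambda(\hat x_{\min},\cdot)=-\hat\varphi_\lambda(\hat x_{\min},\cdot)$, apply Lemma~\ref{Temam:Lemma1} to the scaled problem with the factor $h$ from the right-hand side $h\hat l$, and integrate the exponential in $\hat y$.

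For the first estimate you take a different route. The paper applies the $L^2$ bound of Lemma~\ref{Temam:Lemma6} ($\|\zeta\|\lesssim\|f\|\epsilon^{3/4}$) directly to the scaled problem. You instead recompute that bound from the explicit form $\hat\zeta_\lambda=-g(\hat y)\,e^{-(\hat x-\hat x_{\min})/\hat\epsilon}$: the $\hat x$-integration supplies $\hat\epsilon$ and the pointwise decay of $g$ supplies $\hat\epsilon^{1/2}$.

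The paper's version is a one-line citation, and for this half it needs only an $L^2$-type estimate, not pointwise bounds. Yours is self-contained, uses one cited result (Lemma~\ref{Temam:Lemma1}) for both inequalities, and makes clear that everything is controlled by the trace $g$. The second inequality needs the pointwise bounds anyway, so your route costs no extra hypotheses.

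Your remark on the top layer is well placed. The weight $\hat y$ does not vanish at the top corner. So whenever a top-corner corrector is present (for $\lambda\in\Lambda_T$, or when $\hat l\equiv 1$), its contribution to $\|\hat y\hat\zeta_\lambda\|_{\Gammaleft}$ is only $O(h\hat\epsilon^{1/4})$. The trace bound therefore has to be read with the reflected weight $y_{\max}-\hat y$ near that corner. This is how Lemma~\ref{Lemma:12} actually uses it: $\Lambda_T$ is treated ``analogously'', and the boundary term vanishes for $\Lambda_L$. The paper's proof does not mention this point.
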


\begin{proof}
 The first inequality is an immediate application of Lemma \ref{Temam:Lemma6}.
 
To prove the second inequality, observe that $\hat{\zeta}_\lambda(\hat x_{\min},\hat{y}) = -\hat{\varphi}_\lambda(\hat x_{\min},\hat{y})$. Then, one can use Lemma~\ref{Temam:Lemma1} and follow the same steps of the proof of Lemma~\ref{Temam:BubbleCorrectorsBound} 
 to bound
$$
|\hat{y}\hat{\zeta}_\lambda(\hat x_{\min},\hat{y})| \lesssim  h\,\hat{\epsilon}^{1/2}\, \exp(-c\frac{\hat{y}}{\sqrt{\hat{\epsilon}}}),
$$
and therefore, 
$$
\|\hat{y}\hat{\zeta}_\lambda e^x\|_{\Gammaleft} \lesssim  h\,\hat{\epsilon}^{3/4}.
$$
\end{proof}


\section{Appendix: Auxiliary results}
\label{Appendix:aux}

The main goal of this appendix is to prove Propositions~\ref{propo: bh_inf_bound} and~\ref{Propo:AngleSpace2}, which were instrumental in the proof of Lemma~\ref{Lemma:10}, which in turn is a key step to prove the main result of Theorem~\ref{Thm:main}. The results of all the intermediate lemmas seem natural taking into account the intuitive structure of the parabolic and elliptic boundary layers in the solutions to convection-dominated problems; but the precise proofs that we develop are, sadly, not so short.

Let ${v_h = q_h + b_h}\in V_h$ with $q_h \in V_L$ being piecewise bilinear and $b_h \in V_B$  a bubble function. 
Throughout this section we let $T\in \T$ be an element belonging to the top or bottom of the domain, i.e., $T\cap \{y=0\} \neq \emptyset$ or $T\cap \{y=1\} \neq \emptyset$. Without loss of generality we assume that, in $T$, $b_h= b_1 + \alpha b_2 + \beta b_T$, where $b_1$ and $b_2$ are the patch bubbles corresponding to the left and right side of $T$, respectively, and $b_T$ the element bubble; sketches can be seen in Figure \ref{F:bubbles}. More precisely, scaling to the reference elements, we have
\begin{align*}
-\hat{\epsilon} \hat{\Delta} \hat{b}_1 - \hat{b}_{1,x} &= h \,\hat l(\hat{y}) 
\quad\text{in }(-1,1)\times (0,1), & \hat{b}_1|_{\partial((-1,1)\times (0,1))}=0 ,
\\
-\hat{\epsilon} \hat{\Delta} \hat{b}_2 - \hat{b}_{2,x} &= h \,\hat l(\hat{y}) 
\quad\text{in }(0,2)\times (0,1), & \hat{b}_2|_{\partial((0,2)\times (0,1))}=0 ,
\\
-\hat{\epsilon} \hat{\Delta} \hat{b}_{\hat{T}} - \hat{b}_{\hat{T},x} &= h \,\hat l(\hat{y}) 
\quad\text{in }(0,1)\times (0,1), & \hatb_\hatT|_{\partial((0,1)\times (0,1))}=0 .
\end{align*}

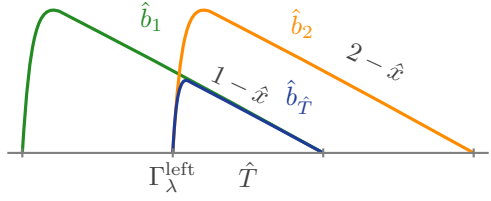
\begin{figure}[h]
    \centering
\begin{tikzpicture}[scale=2]
\node[gray!50!black!, below] at (0,0) {$\Gamma_{\lambda}^{\text{left}}$};
\node[gray!50!black!, below] at (0.5,0) {$\hat{T}$};

\draw[b1, very thick] plot [smooth] 
 (-1,0) .. controls (-0.95,0.8) and (-0.9,1).. (-0.75,0.94)
  .. controls (0,0.55)  .. (1,0);
\node[b1, above left] at (0,0.75) {$\hatb_1$};

\draw[b2, very thick] plot [smooth] 
(0,0) .. controls (0.05,0.8) and (0.1,1).. (0.25,0.94)
  .. controls (1,0.55)  .. (2,0);
\node[b2, above left] at (1,0.7) {$\hatb_2$};

\draw[bT, very thick] plot [smooth]
(0,0) .. controls (0.02,0.4) and (0.05,0.5).. (0.1,0.48)
  .. controls (0.5,0.27)  .. (1,0);
\node[bT, above left] at (1.0,0.2) {$\hatb_\hatT$};

\node[gray!50!black!,rotate=-25] at (0.45,0.45) {$1-\hatx$};
\node[gray!50!black!,rotate=-25] at (1.35,0.6) {$2-\hatx$};

\draw[gray, thick] (-1.1,0)--(2.1,0);

\draw[gray, thick] (-1,-0.03)--(-1,0.03);
\draw[gray, thick] (0, -0.03)--(0,0.03);
\draw[gray, thick] (1, -0.03)--(1,0.03);
\draw[gray, thick] (2, -0.03)--(2,0.03);
\end{tikzpicture}
\caption{\small \label{F:bubbles} Lateral view of the bubbles whose supports contain the element $T$.}
\end{figure}

In this appendix we establish a bound for
$\|\hatb_h/\haty\|_{\Gamma_\lambda^{\text{left}}}$ in Lemma~\ref{Lemma:bubble_bound_sup} and verify a strengthened Cauchy–Schwarz inequality involving $\|\hatN \hat{q}_h\|$, $\|\hatN \hatb_h\|$, and $\|\hatN \hat{v}_h\|$ in Lemma~\ref{lemma:Streng_CS}.
We begin by sketching the argument underlying the estimates, and then proceed with the detailed proofs. To prove the main estimate, established in Proposition~\ref{propo: bh_inf_bound}, we distinguish two cases.

We first consider the case in which
$\max\{|\oab|,|\otab|\}>\rho$, with $\rho>0$ small, in Lemma~\ref{Lemma:bubble_bound_inf_1}.
In this situation, even though the elliptic boundary layer along the left side ${\hatx=0}$ may be small due to cancellations, the boundary layer becomes large over a considerable portion of the bottom ${\haty=0}$ of $\hatT$. We restrict our attention to the bottom boundary layer, noting that the same argument applies, with minor notational changes, to the top boundary ${\haty=1}$.

To analyze this, we focus on the bottom rectangle
\(
R_B := (a,b)\times(0,\hatnsep),
\)
with $a=1/4$ and $b=3/4$ and $n\in\N$, such that $\hatnsep \le \frac12$, to be determined (See Figure \ref{F:R_BR_L}).

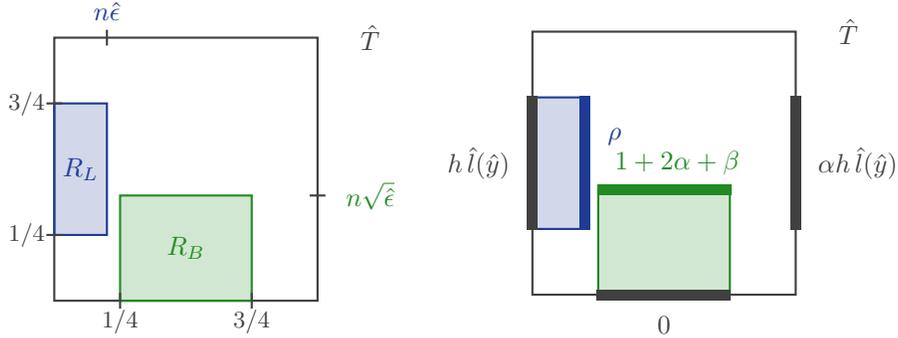
\begin{figure}[h]
    \centering
    \begin{tikzpicture}[scale=3.5]
        \draw[gray!50!black!, thick] (0,0) rectangle (1,1);

         \fill[bT, opacity=0.2] (0,0.25) rectangle (0.2,0.75);
         \draw[bT, thick] (0,0.25) rectangle (0.2,0.75);

         \fill[b1, opacity=0.2] (0.25,0.4) rectangle (0.75,0);
         \draw[b1, thick] (0.25,0.4) rectangle (0.75,0);
    
    \node[gray!50!black!] at (1.2,1) {$\hat{T}$};
    \node[b1] at (1.2,0.4) {$n\sqrt{\hat{\epsilon}}$};
    \node[bT] at (0.2,1.1) {$n\hat{\epsilon}$};
    \draw[gray!50!black!, thick] (0.2,0.97)--(0.2,1.03);
    \draw[gray!50!black!, thick] (0.97,0.4)--(1.03,0.4);
    \node[b1] at (0.5,0.2) {$R_B$};
    \node[bT] at (0.1,0.5) {$R_L$};

    \draw[gray!50!black!, thick] (0.25,-0.03)--(0.25,.03);
    \draw[gray!50!black!, thick] (0.75,-0.03)--(0.75,.03);
    \node[gray!50!black!,below] at (0.25,0) {\small $1/4$};
    \node[gray!50!black!,below] at (0.75,0) {\small $3/4$};

    \draw[gray!50!black!, thick] (-0.03,.25)--(.03,.25);
    \draw[gray!50!black!, thick] (-0.03,.75)--(.03,.75);
    \node[gray!50!black!,left] at (0,0.25) {\small $1/4$};
    \node[gray!50!black!,left] at (0,.75) {\small $3/4$};
    \end{tikzpicture}
    \quad
        \begin{tikzpicture}[scale=3.5]
        \draw[gray!50!black!, thick] (0,0) rectangle (1,1);

         \fill[bT, opacity=0.2] (0,0.25) rectangle (0.2,0.75);
         \draw[bT, thick] (0,0.25) rectangle (0.2,0.75);

         \fill[b1, opacity=0.2] (0.25,0.4) rectangle (0.75,0);
         \draw[b1, thick] (0.25,0.4) rectangle (0.75,0);
    
    \node[gray!50!black!] at (1.2,1) {$\hat{T}$};

     \fill[gray!50!black,] (0.98,0.245) rectangle (1.02,0.755);
     \node[gray!50!black!,right] at (1.05,0.5) {$\alpha h\,\hat l(\hat{y})$};
     \fill[gray!50!black,] (0.245,-0.02) rectangle (.754,0.02);
     \node[gray!50!black!,below] at (0.5,-0.05) {$0$};
     \fill[gray!50!black,] (-0.02,0.245) rectangle (0.02,0.755);
     \node[gray!50!black!,left] at (-0.05,0.5) {$h\,\hat l(\hat{y})$};
     
     \fill[b1] (0.245,0.38) rectangle (.755,0.42);
     \node[b1] at (0.55,0.5) {$\otab$};
     \fill[bT] (0.18,0.245) rectangle (.22,.755);
     \node[bT,right] at (0.25,0.6) {$\rho$};
    
    \end{tikzpicture}
    \caption{\small\label{F:R_BR_L}  Regions $R_L$ and $R_B$ (left). Approximate values on the element boundary and on the relevant edges: the top edge of $R_B$ and the right edge of $R_L$ (right).}
\end{figure}

We study the behavior of $\partial_\haty \hatb_h^{as}$ in $R_B$. 
On the right edge of $T$, $\hatb_h^{as}$ is approximately $\alpha\,h\,\hat l(\haty)$ (see Lemma~\ref{lemma:new_lemma}), and then increases toward the left with speed $\oab$. 

Therefore, at height $\haty=\hatnsep$ on the top of $R_B$, the function varies approximately from $\alpha$ to $\otab$. Since $\hatb_h^{as}$ vanishes at $\haty=0$, this implies that near the bottom boundary we have $|\partial_\haty \hatb_h^{as}|\cong1/\sqrt{\hat{\epsilon}}$.

Next, in Lemma~\ref{Lemma:bubble_bound_inf_2}, we consider the case
$\max\{|\oab|,|\otab|\}\le \rho$.
In this situation, although the parabolic boundary layer at the bottom ${\haty=0}$ may be small due to cancellations, a significant boundary layer develops along a substantial portion of the left side ${\hatx=0}$ of $\hatT$.

We therefore focus on the rectangle $R_L := (0,n\hat{\epsilon})\times(a,b)$ located on the left side of the domain, where the elliptic boundary layer appears. Here, $a=1/4$ and $b=1/2$ and $n\in\N$, such that $\hatnep \le \frac12$, to be determined (See Figure \ref{F:R_BR_L}).

When $\max\{|\oab|,|\otab|\}\le \rho$, the bubble $\hatb_h^{as}$ remains small near the left side at distance $\approx \hat{\epsilon}$, 
while at $\hatx=0$, we have $\hatb_h = \hatb_1\approx h \hat l(\haty)$ .
Therefore, $|\partial_\hatx \hatb_h^{as}|\cong 1/\hat{\epsilon}$ close to the left boundary of $\hatT$.


\begin{lemma}
\label{Lemma:bubble_bound_sup}
Let $\hat{b}_1$ be the patch bubble with $\supp(\hat{b}_1)=(-1,1)\times(0,1)$. Then:
\begin{equation*}
 \left\| \frac{\hat{b}_1(0,\haty)}{\haty} \right\|_{\{\hat{x}=0\}}^2 \lesssim
   \frac{1}{\sqrt{\hat{\epsilon}}}.
\end{equation*}
\end{lemma}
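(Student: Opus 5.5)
We must bound $\int_0^1 \hat b_1(0,\hat y)^2/\hat y^2\, d\hat y \lesssim \hat\epsilon^{-1/2}$. Here $\hat b_1$ solves $-\hat\epsilon\hat\Delta \hat b_1-\partial_{\hat x}\hat b_1 = h\,\hat l(\hat y)$ on $(-1,1)\times(0,1)$ with zero boundary data. The natural plan is to work with the asymptotic expansion of $\hat b_1$ from Appendix~\ref{Appendix:Temam}, applied on the reference patch. On the line $\hat x=0$ (the interior edge, away from the outflow side $\hat x=-1$) the $\theta$ and $\zeta$ correctors are exponentially small. So there
\[
\hat b_1(0,\hat y)=\hat b_1^0(0,\hat y)+\hat\varphi(0,\hat y)+\hat\xi(0,\hat y)+\big(\hat b_1-\hat b_1^{as}\big)(0,\hat y)+\text{e.s.t.},
\]
with $\hat b_1^0(0,\hat y)=h\,\hat l(\hat y)$. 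The parabolic corrector $\hat\varphi$ cancels the nonzero value of $\hat b_1^0$ at $\hat y=0$ (and at $\hat y=1$ where needed). I do not expect to control the remainder pointwise, so I would avoid the full expansion and instead use a barrier/maximum principle argument directly.

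\textbf{Step 1 (pointwise upper bound near $\hat y=0$).} I will build a supersolution. Let $w(x,y)$ solve the one-dimensional parabolic problem $-\hat\epsilon\,\partial_{yy}w-\partial_x w = h\,\hat l(y)$ on $(-1,1)\times(0,\infty)$ with $w(1,y)=0$ and $w(x,0)=0$. By Duhamel, with the heat-type kernel in $y$ and "time" $1-x$, this solution is explicit. Since $\partial_{xx}w\le 0$ can be checked (or a small correction added), $w$ is a supersolution of the full operator. Then $|\hat b_1|\le w$, and likewise $-w$ is a subsolution, by the comparison principle on the rectangle, since $w\ge0$ on the remaining boundary.

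From the explicit formula, $w(0,y)\lesssim h\min\{1,\,y/\sqrt{\hat\epsilon}\}$. Near $y=0$ the solution behaves like $h$ times the error-function profile $\mathrm{erf}(y/(2\sqrt{\hat\epsilon\,(1-x)}))$ at time $1-x=1$, and the Lipschitz constant of that profile is $\sim 1/\sqrt{\hat\epsilon}$. An analogous barrier handles the top edge. There $\hat l$ may not vanish, but $\hat b_1$ is zero on $\hat y=1$; since the weight $1/\hat y^2$ is bounded there, only a uniform bound $|\hat b_1|\lesssim h$ is needed, which follows from the barrier $h\,(1-\hat x)$.

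\textbf{Step 2 (integration).} Split $\int_0^1$ into $(0,\sqrt{\hat\epsilon})$ and $(\sqrt{\hat\epsilon},1)$. On the first piece, $\hat b_1^2/\hat y^2\lesssim h^2/\hat\epsilon$, which gives a contribution of $h^2\hat\epsilon^{-1/2}$. On the second, $\hat b_1^2/\hat y^2\lesssim h^2/\hat y^2$, which integrates to $\lesssim h^2\hat\epsilon^{-1/2}$. The total is $\lesssim h^2\hat\epsilon^{-1/2}\le\hat\epsilon^{-1/2}$, as claimed. The bound even carries an extra factor $h^2$, consistent with the scaling used elsewhere.

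\textbf{Main obstacle.} The delicate point is justifying the $O(y/\sqrt{\hat\epsilon})$ linear growth near $\hat y=0$ uniformly in $\hat\epsilon$. This requires the barrier to genuinely be a supersolution despite the neglected $-\hat\epsilon\partial_{xx}$ term and the corner at $(1,0)$. I would first try $w$ plus a multiple of $\hat\epsilon(1-x)$ to absorb $\hat\epsilon\,\partial_{xx}w$. If that fails, I would fall back on the pointwise bound $|\hat y\hat\varphi|\lesssim h\hat\epsilon^{1/2}e^{-c\hat y/\sqrt{\hat\epsilon}}$ from Lemma~\ref{Temam:Lemma1}, combined with Lemma~\ref{Temam:Ap} for $\hat\xi$ near $\hat x=0$.
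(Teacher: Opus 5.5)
Your proof is correct, and it follows a genuinely different route from the paper's. The paper applies Hardy's inequality on $\{\hat x=0\}$ to reduce the claim to an $L^2$ bound on $\partial_{\hat y}\hat b_1(0,\cdot)$, and then imports the pointwise estimate $|\partial_{\hat y}\hat b_1(0,\hat y)|\lesssim\hat\epsilon^{-1/2}e^{-c\hat y/\sqrt{\hat\epsilon}}$ from Kellogg--Stynes. You never use derivatives: you bound $\hat b_1(0,\hat y)$ itself by a comparison function and integrate the weight $\hat y^{-2}$ directly.

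The step you flag as delicate closes without any correction if you take the constant source $h\ge h\hat l$, since $0\le\hat l\le 1$. With $t=1-\hat x$, set $w=h\int_0^t\mathrm{erf}\big(\hat y/(2\sqrt{\hat\epsilon s})\big)\,ds$. This $w$ satisfies $-\hat\epsilon\partial_{\hat y\hat y}w-\partial_{\hat x}w=h$, is nonnegative, and vanishes on $\hat x=1$ and on $\hat y=0$. Moreover $\partial_{\hat x\hat x}w=h\,\partial_t\mathrm{erf}\big(\hat y/(2\sqrt{\hat\epsilon t})\big)\le0$, so $-\hat\epsilon\hat\Delta w-\partial_{\hat x}w\ge h\ge h\hat l$. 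Hence $0\le\hat b_1\le w$ on the whole patch. Then $\mathrm{erf}(z)\le\min\{1,2z/\sqrt\pi\}$ gives $w(0,\hat y)\le h\min\{1,2\hat y/\sqrt{\pi\hat\epsilon}\}$. Keeping the source $h(1-\hat y)$ extended linearly would instead make $w$ slightly negative on $\hat y=1$. You therefore need neither a separate top-edge barrier nor your fallback via Lemma~\ref{Temam:Lemma1}.

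What your route buys: it is elementary and self-contained, relying only on the maximum principle and an explicit heat profile rather than on the Kellogg--Stynes derivative estimates and their hypotheses. It also keeps the factor $h^2$, giving $\|\hat b_1(0,\cdot)/\hat y\|^2\lesssim h^2\hat\epsilon^{-1/2}$. That is exactly the form needed when this lemma is compared with the lower bound $h^2(1+|\alpha|+|\beta|)^2\hat\epsilon^{-1/2}$ in Proposition~\ref{propo: bh_inf_bound}. The paper's route yields more information, namely a derivative bound along the whole edge, at the cost of a much heavier external regularity result.
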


\begin{proof}
Use Hardy's inequality to bound 
\begin{equation*}
 \left\| \frac{\hat{b}_1(0,\haty)}{\haty} \right\|_{\{\hat{x}=0\}} \lesssim
   \left\| \partial_{\hat{y}} \hat{b}_1 \right\|_{\{\hat{x}=0\}}.
\end{equation*}

By \cite[Theorem 5.1]{KelloggStynes:05} $\partial_{\hat{y}} \hat{b}_1$ at
$\{\hat{x}=0\}$ is bounded by
$$
|\partial_{\hat{y}} \hat{b}_1(0,\haty)| \lesssim \hat{\epsilon}^{-1/2} 
    \exp(-c \haty/\sqrt{\hat{\epsilon}}),
$$
and from this 
\[
 \left\| \partial_{\hat{y}} \hat{b}_1 \right\|_{\{\hat{x}=0\}} \lesssim \hat{\epsilon}^{-1/4}.
\qedhere
\]
\end{proof}


\begin{lemma}\label{lemma:new_lemma}
Let $\hat{b}_2$ be the patch bubble with $\supp(\hat{b}_2)=[0,2]\times[0,1]$. Then there are constants $c,C>0$ independent of $\hat{\epsilon}$ such that, for all $n\leq \frac{1}{2\sqrt{\hat{\epsilon}}}$,
\begin{equation*}
| \hatb_2^{as}(1,\hat y) - h\,\hat l(\haty)|\lesssim \max\{A_n,B_n,C_n,D_n\}=\frac{h\exp(-cn)}{n}+h\exp(-cn^2) + \est 
\end{equation*}
for $\haty\in(\hatnsep,1-\hatnsep).$ Here,
\begin{equation*}
    A_n = \frac{h\exp(-cn)}{n},\hspace{0.2 cm}
    B_n = h\exp(\frac{-c n^2}{4}),
    \hspace{0.2 cm}
    C_n = h\exp(-\frac{c}{\hat{\epsilon}}),
    \hspace{0.2 cm}
    D_n = h\exp(-c(\frac{1}{\hat{\epsilon}}+n)).
\end{equation*}
\end{lemma}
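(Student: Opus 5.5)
We write out the asymptotic expansion of $\hat b_2$ on the reference patch $(0,2)\times(0,1)$, following Appendix~\ref{Appendix:Temam} with $f=h\,\hat l(\hat y)$, $x_{\max}=2$, $y_{\max}=1$ and $\epsilon$ replaced by $\hat\epsilon$:
\[
\hat b_2^{as}=\hat b_2^0+\varphi+\xi+\theta+\zeta,\qquad \hat b_2^0(\hat x,\hat y)=h\,\hat l(\hat y)(2-\hat x).
\]
Evaluated at $\hat x=1$, the leading term is exactly $h\,\hat l(\hat y)$. It therefore suffices to bound each corrector pointwise on the segment $\{\hat x=1\}\times(n\sqrt{\hat\epsilon},1-n\sqrt{\hat\epsilon})$ and check that each bound is one of $A_n,B_n,C_n,D_n$.

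\textbf{Elliptic correctors $\theta$ and $\zeta$.} By Lemma~\ref{Temam:Lemma3}, $|\theta(1,\hat y)|\lesssim h\exp(-c/\hat\epsilon)=C_n$. By Lemma~\ref{Temam:Lemma5}, $|\zeta(1,\hat y)|\lesssim h\exp(-c(1/\hat\epsilon+\hat y/\sqrt{\hat\epsilon}))\le h\exp(-c(1/\hat\epsilon+n))=D_n$ for $\hat y\ge n\sqrt{\hat\epsilon}$. The same holds symmetrically for the top layer. Here we must check that the constant $\kappa\|f\|$ scales as $h$, which holds because the right-hand side carries the factor $h$.

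\textbf{Parabolic corrector $\varphi$.} This is the term where the $A_n$ and $B_n$ structure must come from, and it is the main obstacle. Lemma~\ref{Temam:Lemma1} only gives $|\varphi|\lesssim h\exp(-cn)$ and $|\hat y\varphi|\lesssim h\sqrt{\hat\epsilon}\exp(-cn)$. Dividing the second by $\hat y\ge n\sqrt{\hat\epsilon}$ yields $|\varphi(1,\hat y)|\lesssim h\exp(-cn)/n=A_n$, which is the stated form.

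To obtain the Gaussian term $B_n$ as well, I would use the explicit representation of $\varphi$ as the solution of the heat-type equation $-\hat\epsilon\varphi_{\hat y\hat y}-\varphi_{\hat x}=0$ on the half-line, with time variable $2-\hat x$. The explicit formula is Duhamel's integral of the boundary datum $h(\cdot)$ against the kernel $\operatorname{erfc}\bigl(\hat y/(2\sqrt{\hat\epsilon(s-\hat x)})\bigr)$. At $\hat x=1$ the elapsed time is at most $1$, so the kernel is bounded by $\exp(-\hat y^2/(4\hat\epsilon))\le\exp(-n^2/4)$. This gives $B_n$ up to constants. In either case the minimum of the two bounds is below $\max\{A_n,B_n\}$. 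The top layer is identical after the reflection $\hat y\mapsto 1-\hat y$. The condition $n\le 1/(2\sqrt{\hat\epsilon})$ guarantees that the segment is nonempty and that the bottom and top layers are treated separately.

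\textbf{Corner corrector $\xi$.} By Lemma~\ref{Temam:Ap}, $|\xi|\lesssim h\exp(-c(\sqrt{X^2+Y^2}-X))$ with $X=1/(2\hat\epsilon)$ at $\hat x=1$ and $Y\ge n/(2\sqrt{\hat\epsilon})$. Since
\[
\sqrt{X^2+Y^2}-X=\frac{Y^2}{\sqrt{X^2+Y^2}+X}\gtrsim\frac{Y^2}{X}\sim n^2
\]
whenever $Y\lesssim X$ (which holds since $n\le1/(2\sqrt{\hat\epsilon})$), this gives $h\exp(-cn^2)$, i.e.\ $B_n$.

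Summing the four bounds and noting that $C_n$ and $D_n$ are exponentially small in $\hat\epsilon$ gives the claim.
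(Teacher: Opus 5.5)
Your proposal is correct and follows the paper's proof: both reduce $\hat b_2^{as}(1,\hat y)-h\,\hat l(\hat y)$ to the four correctors and bound them as follows:
\begin{itemize}
\item $\varphi$ by $A_n$ (Lemma~\ref{Temam:Lemma1}, dividing the bound on $|\hat y\varphi|$ by $\hat y\ge n\sqrt{\hat\epsilon}$);
\item $\xi$ by $B_n$ (Lemma~\ref{Temam:Ap});
\item $\theta$ by $C_n$ (Lemma~\ref{Temam:Lemma3});
\item $\zeta$ by $D_n$ (Lemma~\ref{Temam:Lemma5}).
\end{itemize}
Your Duhamel/erfc detour for a Gaussian bound on $\varphi$ is harmless but unnecessary, since the stated maximum already contains $A_n$ and the Gaussian term $B_n$ comes from $\xi$ alone.
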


\begin{proof}
For the proof, we are going to consider only the values for $\haty\in(\hatnsep,1/2),$ and consider the correctors at the bottom parabolic layer. 

Using the asymptotic expansion on \cite{GieEtAl:13} for the bubble $\hatb_2$ we can establish
\begin{equation*}
    |\hatb_2^{as}(1,\haty)-\hatb_2^0(1,\haty)|\leq |\varphi_b + \xi_b +\theta_b + \zeta_b| :=
    |\text{correctors of $\hatb_2$}|,
\end{equation*}
where the correctors  are defined following the definition contained in the Appendix~\ref{Appendix:Temam}.

Let $n\in\N$ with $n\leq \frac{1}{2\sqrt{\hat{\epsilon}}}$, and $\haty\in(\hatnsep,1/2)$, by  Lemma~\ref{Temam:Lemma1},  the first bound of Lemma~\ref{Temam:Ap}, Lemma~\ref{Temam:Lemma3} and Lemma~\ref{Temam:Lemma5} we get
\begin{align*}
    |\varphi_b(1,\haty)|\lesssim \frac{h}{n}\exp(-cn),
    &\quad
    |\xi_b(1,\haty)|\lesssim 
     h\exp(\frac{-c n^2}{4}),
     \quad\\
    |\theta_b(1,\haty)|\lesssim h\exp(-\frac{c}{\hat{\epsilon}}),
    &\quad
    |\zeta_b(1,\haty)|\lesssim h\exp(-c(\frac{1}{\hat{\epsilon}}+n)).
    \qedhere
\end{align*}




\end{proof}


\begin{lemma}
\label{Lemma:bubble_bound_inf_1}
Let $\hatb_h= \hatb_1 + \alpha \hatb_2 + \beta \hatb_T$ on $\hatT$ for some $\alpha,\beta\in \rz$, such that 
$\max\{|1+\alpha+\beta|,|1+2\alpha+\beta|\}>\rho$.
Then,
\begin{equation*}
\int_{\hatT}|\nabla \hatb_h^{as}|^2 
\ge
\int_{R_B}|\nabla \hatb_h^{as}|^2 
\gtrsim
    \frac{h^2(1+|\alpha|+|\beta|)^2}{\sqrt{\hat{\epsilon}}}.
\end{equation*} 

Here, $\hatb_h^{as}$ denotes the asymptotic expansion of $\hatb_h$, given by
\begin{equation}\label{bhas}
\hatb_h^{as} =  \hatb_1^{as} + \alpha \hatb_2^{as} + \beta \hatb_\hatT^{as},
\end{equation}
 where $\hatb_1^{as}$,  $\hatb_2^{as}$, $\hatb_\hatT^{as}$ are the asymptotic expansions given by~\cite{GieEtAl:13}, corresponding to  $\hatb_1$,  $\hatb_2$, $\hatb_\hatT$, respectively.
\end{lemma}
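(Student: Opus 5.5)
Restricting the integral to $R_B=(1/4,3/4)\times(0,\hatnsep)$ and keeping only $\partial_\haty$ gives the first inequality at once, since the integrand is nonnegative. Everything then rests on a lower bound for $\int_{R_B}|\partial_\haty \hatb_h^{as}|^2$. For each fixed $\hatx\in(1/4,3/4)$ I apply the one-dimensional Cauchy--Schwarz inequality in $\haty$, using $\hatb_h^{as}(\hatx,0)=0$ (all bubbles and their expansions vanish on $\haty=0$, up to \est\ terms). This gives
\[
\int_0^{\hatnsep}|\partial_\haty \hatb_h^{as}(\hatx,\haty)|^2\,d\haty \ \ge\ \frac{|\hatb_h^{as}(\hatx,\hatnsep)|^2}{\hatnsep}.
\]
Integrating in $\hatx$, the claim reduces to showing that $|\hatb_h^{as}(\hatx,\hatnsep)|\gtrsim h(1+|\alpha|+|\beta|)$ on a subset of $(1/4,3/4)$ of measure bounded below independently of $\hat{\epsilon}$. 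With $n$ fixed, the factor $1/(n\sqrt{\hat{\epsilon}})$ then produces the stated $h^2(1+|\alpha|+|\beta|)^2/\sqrt{\hat{\epsilon}}$.

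To evaluate $\hatb_h^{as}$ on the top edge of $R_B$, I write $\hatb_h^{as}=\hatb_h^0+{}$correctors, as in \eqref{bhas}. The zeroth-order terms are explicit:
\[
\hatb_1^0=h\hat l(\haty)(1-\hatx),\qquad \hatb_2^0=h\hat l(\haty)(2-\hatx),\qquad \hatb_\hatT^0=h\hat l(\haty)(1-\hatx).
\]
Hence
\[
\hatb_h^0=h\hat l(\haty)\big[(1+\alpha+\beta)(1-\hatx)+\alpha\big],
\]
which is affine in $\hatx$, equal to $h\hat l(\haty)(\otab)$ at $\hatx=0$ and to $h\hat l(\haty)\alpha$ at $\hatx=1$. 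On $\haty=\hatnsep$, with $n\sqrt{\hat{\epsilon}}\le 1/2$, the factor $\hat l(\haty)$ is bounded below by a constant; for $\Lambda_B$ it equals $1-\hatnsep\ge 1/2$. The corrector terms at $\haty=\hatnsep$ with $\hatx\in(1/4,3/4)$ are bounded exactly as in Lemma~\ref{lemma:new_lemma}, using Lemmas~\ref{Temam:Lemma1}, \ref{Temam:Ap}, \ref{Temam:Lemma3}, \ref{Temam:Lemma5}. Since $\hatx$ stays a fixed distance from both $0$ and the right edges, $\theta$ and $\zeta$ contribute only \est\ terms, while $\varphi$ and $\xi$ contribute at most $h(1+|\alpha|+|\beta|)\big(e^{-cn}/n+e^{-cn^2/4}\big)$, because each is linear in the coefficients.

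The algebraic core is a lower bound for the affine function $g(\hatx)=(\oab)(1-\hatx)+\alpha$ on a positive-measure portion of $(1/4,3/4)$. From $\max\{|\oab|,|\otab|\}>\rho$ I need $\max|g|\gtrsim\rho'(1+|\alpha|+|\beta|)$. I would argue as follows. If $|\alpha|+|\beta|$ is large, then $(\alpha,\oab)$ cannot both be small relative to it, because $\beta=(\oab)-1-\alpha$. The two values $g(1/4)$ and $g(3/4)$ determine both $\alpha$ and $\oab$ linearly, so
\[
\max_{[1/4,3/4]}|g|\gtrsim |\alpha|+|\oab|\gtrsim 1+|\alpha|+|\beta|
\]
once $|\alpha|+|\beta|\ge C$. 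If instead $|\alpha|+|\beta|\le C$, the hypothesis gives $|g(0)|>\rho$ or $|g(1)|>\rho$ after rewriting through $\otab$. Since $g$ is affine with bounded slope, this yields $|g|\gtrsim\rho$ on a subinterval of $(1/4,3/4)$ of length $\gtrsim\rho$. The case analysis can be made uniform because an affine function that exceeds $\rho$ in absolute value at an endpoint of $[0,1]$ and has bounded slope exceeds $\rho/2$ on an interval of fixed length. Reaching into $[1/4,3/4]$ still needs care, and I expect to handle it by combining it with the fact that affine functions on $[0,1]$ satisfy $\|g\|_{L^2(1/4,3/4)}\simeq\max_{[0,1]}|g|$. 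That equivalence is the cleanest route and avoids subintervals entirely: I integrate $|g|^2$ directly.

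The main obstacle is keeping the corrector perturbations below the main term uniformly in $(\alpha,\beta)$. This is resolved by choosing $n$ large enough, depending only on $\rho$, that $e^{-cn}/n+e^{-cn^2/4}\ll\rho$, and then restricting to $\hat{\epsilon}$ small enough that $\hatnsep\le1/2$ and the \est\ terms are negligible; this restriction is where $\hat{\epsilon}_0$ enters. With that, $\|\hatb_h^{as}(\cdot,\hatnsep)\|_{L^2(1/4,3/4)}\gtrsim h(1+|\alpha|+|\beta|)$, and the one-dimensional Cauchy--Schwarz step concludes the proof.
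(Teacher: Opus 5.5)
Your proposal is correct and follows essentially the same route as the paper. Both arguments restrict to $R_B$, apply Cauchy--Schwarz in $\haty$ on $(0,\hatnsep)$, split $\hatb_h^{as}$ into the affine part $h\hat l(\haty)[(\otab)-(\oab)\hatx]$ plus correctors of size $h(1+|\alpha|+|\beta|)\delta_n$, bound $\int_{1/4}^{3/4}|g|^2$ from below by $(\otab)^2+(\oab)^2\gtrsim_\rho(1+|\alpha|+|\beta|)^2$, and take $n$ large depending on $\rho$; the paper gets the middle bound via Young's inequality and you via norm equivalence for affine functions. One small slip: the hypothesis controls $|g(0)|=|\otab|$ or the slope $|g(0)-g(1)|=|\oab|$, not $|g(1)|=|\alpha|$, but your norm-equivalence route already covers this.
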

\begin{proof}

We define $\hatb_h^0 = \hatb_1^0 + \alpha \hatb_2^0 + \beta \hatb_\hatT^0$ with $\hatb_*^0$ the solution of the corresponding transport equation with homogeneous boundary value on the right side of the domain.
Then, for $\hatx\in(1/4,3/4)$,
\begin{equation}\label{eq:bh_b0}
\begin{split}
    |\hatb_h^{as}(\hatx,\hatnsep)-\hatb_h^0(\hatx,\hatnsep)|\leq{}& |\varphi_B(\hatx,\hatnsep)| + |\xi_B(\hatx,\hatnsep)|  
    \\
    &+|\theta_B(\hatx,\hatnsep)| + |\zeta_B(\hatx,\hatnsep)|.
\end{split}
\end{equation}
where $\chi_B = \chi_1 + \alpha\chi_2 + \beta\chi_T$, for $\chi \in \{\phi,\xi,\theta,\zeta\}$ are the correctors of each bubble.
In order to bound each of the terms on the right-hand side, we are going to follow the steps in the proof of Lemma~\ref{lemma:new_lemma} for $\hatx\in(a,b)$ and $\haty=\hatnsep$. By  Lemma~\ref{Temam:Lemma1},  Lemma~\ref{Temam:Lemma3}  and Lemma~\ref{Temam:Lemma5} we get
\begin{equation}
\label{eq:boundACD_2}
\begin{aligned}
    &|\varphi_B(\hatx,\hatnsep)| \lesssim h(1+|\alpha|+|\beta|)\frac{\exp(-cn)}{n},\\
    &|\theta_B(\hatx,\hatnsep)| \lesssim h(1+|\alpha|+|\beta|)\exp(-\frac{c}{4\hat{\epsilon}})=  h(1+|\alpha|+|\beta|)\est,\\
    &|\zeta_B(\hatx,\hatnsep)| 
    \lesssim h(1+|\alpha|+|\beta|)\exp(-c(\frac{1}{4\hat{\epsilon}}+n))
    =  h(1+|\alpha|+|\beta|)\est.
\end{aligned}
\end{equation}
Using the first bound of Lemma~\ref{Temam:Ap}, we obtain
\begin{equation}
\label{eq:boundB_2}
\begin{aligned}
    |\xi_B(\hatx,\hatnsep)|& \lesssim h(1+|\alpha|+|\beta|)
    \exp(-c\frac{\sqrt{(1-\hatx)^2+(\hatnsep)^2}-(1-\hatx)}{2\hat{\epsilon}})\\
    &  \lesssim h(1+|\alpha|+|\beta|)
    \exp(-c\frac{n^2\hat{\epsilon}}{\hat{\epsilon}}\frac{1}{1-\hatx})
    \\
    & 
    \lesssim h(1+|\alpha|+|\beta|)
    \exp(-\frac{c}{4}n^2).
\end{aligned}
\end{equation}

Putting together Eqs.~\eqref{eq:bh_b0},~\eqref{eq:boundACD_2},~\eqref{eq:boundB_2} we get
\begin{equation*}
\label{eq:bh0-right-2}
\begin{aligned}
        |\hatb_h^{as}(\hatx,\hatnsep)-\hatb_h^0(\hatx,\hatnsep)|
         &\lesssim 
         h(1+|\alpha|+|\beta|)\delta_n,
\end{aligned}
\end{equation*}
where $\delta_n \doteq e^{-cn}/n+e^{-cn^2}+\est$.
Therefore, for $\hatx\in(a,b)$, we obtain
\begin{equation}\label{eq:bound_bh2}
    |\hatb_h^0(\hatx,\hatnsep)|^2 - 
    C h^2(1+|\alpha|+|\beta|)^2\delta_n^2
    \leq 
    |b_h^{as}(x,\nsep)|^2.
\end{equation}

Using the Fundamental Theorem of Calculus in the $\haty$-direction, for $\hatx\in(a,b)$, and the Cauchy-Schwartz inequality, we get
\begin{align*}
    \left[\hatb_h^{as}(\hatx,\hatnsep)\right]^2 
    &= \left[\hatb_h^{as}(\hatx,\hatnsep)-\hatb_h^{as}(\hatx,0)\right]^2 
    = \left[\int_0^{\hatnsep}\partial_\haty \hatb_h^{as}(\hatx,\haty) d\haty\right]^2
    \\
    &\le 
     \hatnsep \int_0^{\hatnsep}|\partial_\haty \hatb_h^{as}(\hatx,\haty)|^2 d\haty.
 \end{align*}
Integrating over $\hatx\in(a,b)$ we obtain
\begin{equation*}
\frac{1}{\hatnsep}\int_a^b \hatb_h^{as}(\hatx,\hatnsep)^2\leq\int_{R_B}|\partial_\haty \hatb_h^{as}|^2.
\end{equation*}

Therefore, expanding the terms in~\eqref{eq:bound_bh2} we get
\begin{equation}\label{crucial bound}
\begin{split}
    \int_{R_B}|\hatN \hatb_h^{as}|^2 \ge {}&
    \int_{R_B} |\partial_\haty \hatb_h^{as}|^2
    \ge\frac{1}{\hatnsep}\int_a^b |\hatb_h^{as}(\hatx,\hatnsep)|^2d\hatx 
    \\
    \ge{}& \frac{1}{\hatnsep}
    \int_a^b \hatb_h^0(\hatx,\hatnsep)^2d\hatx
    -\frac{C_1 h^2}{\hatnsep}(1+|\alpha|+|\beta|)^2\delta_n^2 .
\end{split}
\end{equation}
Now, $\int_a^b \hatb_h^0(\hatx,\hatnsep)^2 \,d\hatx 
= l(n\sqrt{\hat{\epsilon}})^2 \int_a^b (\otab - (\oab)\hatx)^2\,d\hatx$, and 
\begin{align*}
I :={}&\int_a^b (\otab - (\oab)\hatx)^2\,d\hatx
\\
={}& (b-a)
\Big[(1+2\alpha+\beta)^2-(a+b)(1+2\alpha+\beta)(1+\alpha+\beta)\\
&+\frac{a^2 +ab+b^2}{3}(1+\alpha+\beta)^2\Big]    .
\end{align*}

\yesdelete{
Using that $a=1/4$, $b=3/4$, $a+b=1$, $b-a=1/2$, and $(a^2+ab+b^2)/3=13/48$ we have
\begin{align*}
I ={}& \frac12
    \Big[(1+2\alpha+\beta)^2-(1+2\alpha+\beta)(1+\alpha+\beta)
    +\frac{13}{48}(1+\alpha+\beta)^2\Big].
\end{align*}
Young's inequality, for $\eta>0$, yields
\begin{align*}
I \ge{}& \frac12\Big[\big(1-\frac{1}{4\eta}\big)(1+2\alpha+\beta)^2
+\big(\frac{13}{48}-\eta\big)(1+\alpha+\beta)^2\Big] 
\end{align*}
We now choose $1/4 < \eta < 13/48$ so that $\frac{13}{48}-\eta > 0$ and $1-\frac1{4\eta} >0$, whence
}

Replacing the values of $a$ and 
$b$, and applying Young’s inequality to the product $(\oab)(\otab)$, we obtain
\begin{equation}\label{Ibound}
I \gtrsim  (1+2\alpha+\beta)^2
+(1+\alpha+\beta)^2 .
\end{equation}

\yesdelete{
Now notice that,
\begin{equation*}
\frac{\alpha^2}2 \le (\oab)^2 +(\otab)^2,
\quad\text{ for all $\beta > 0$,}
\end{equation*}
due to the fact that, as a function of $\beta$,  $(\oab)^2+(\otab)^2$ attains its minimum at $\beta = -1-\frac{3\alpha}2$, and that minimum value is precisely $\alpha^2/2$. 
Also, 
\[
1 \lesssim \rho^2 \lesssim (1+2\alpha+\beta)^2
+(1+\alpha+\beta)^2 
\]
and 
\[
|\beta |^2 \le \big(|\oab|+1+|\alpha|\big)^2 \lesssim (1+2\alpha+\beta)^2
+(1+\alpha+\beta)^2 .
\]
}

Now, observing that
\begin{align*}
    &\frac{\alpha^2}2 \le (\oab)^2 +(\otab)^2,\\
&1 \lesssim \rho^2 \lesssim (1+2\alpha+\beta)^2
+(1+\alpha+\beta)^2,\\
&|\beta |^2 \le \big(|\oab|+1+|\alpha|\big)^2 \lesssim (1+2\alpha+\beta)^2
+(1+\alpha+\beta)^2. 
\end{align*}

We get
\[
(1+|\alpha|+|\beta|)^2
\lesssim (1+2\alpha+\beta)^2
+(1+\alpha+\beta)^2 
.
\]

Inserting this last bound in~\eqref{Ibound} and using that $h\, \hat l(\hatnsep)\ge 1/2$, we have that 
\[
\int_a^b \hatb_h^0(\hatx,\hatnsep)^2 \,d\hatx   \ge C_2h^2 (1+|\alpha|+|\beta|)^2,
\]
which using~\eqref{crucial bound} implies
\begin{align*}
    \int_{R_B}|\hatN b_h^{as}|^2 
    \ge{}& 
  \frac{h^2}{\hatnsep} \big( C_2 - \delta_n^2 C_1 \big)
  (1+|\alpha|+|\beta|)^2
\ge{} 
  \frac{Ch^2}{\hatnsep} 
  (1+|\alpha|+|\beta|)^2,
\end{align*}
if $n \in\N$ is large enough so that $\delta_n^2 \le C_2/C_1$.
\end{proof}

\begin{lemma}
\label{Lemma:bubble_bound_inf_2}
Let $\hatb_h= \hatb_1 + \alpha \hatb_2 + \beta \hatb_T$ on $\hatT$ for some $\alpha,\beta\in \rz$, such that 
$\max\{|\oab|,|\otab|\}\leq\rho$.
Then
\begin{equation*}
\int_\hatT|\hatN \hatb_h^{as}|^2 
\ge
\int_{R_L}|\hatN \hatb_h^{as}|^2 \gtrsim 
    \frac{ h^2(1+|\alpha|+|\beta|)^2}{\hat{\epsilon}}
    .
\end{equation*}  
\end{lemma}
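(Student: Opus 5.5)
The plan mirrors the proof of Lemma~\ref{Lemma:bubble_bound_inf_1}, with the roles of $x$ and $y$ exchanged and the elliptic layer replacing the parabolic one. I work on $R_L=(0,\hatnep)\times(a,b)$ with $a=1/4$, $b=1/2$ and apply the fundamental theorem of calculus in the $\hatx$-direction. For each fixed $\haty\in(a,b)$,
\[
\big[\hatb_h^{as}(\hatnep,\haty)-\hatb_h^{as}(0,\haty)\big]^2\le \hatnep\int_0^{\hatnep}|\partial_\hatx\hatb_h^{as}|^2\,d\hatx .
\]
Integrating in $\haty$ then gives
\[
\int_{R_L}|\hatN\hatb_h^{as}|^2\ge \frac{1}{\hatnep}\int_a^b\big[\hatb_h^{as}(\hatnep,\haty)-\hatb_h^{as}(0,\haty)\big]^2\,d\haty .
\]
It therefore suffices to show that the jump across $R_L$ is bounded below by $c\,h(1+|\alpha|+|\beta|)$ for $\haty\in(a,b)$.

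\textbf{Left edge.} At $\hatx=0$ only $\hatb_1$ is nonzero, since $\hatb_2$ and $\hatb_\hatT$ vanish on $\{\hatx=0\}$. The point $\hatx=0$ lies in the interior of the support $(-1,1)\times(0,1)$ of $\hatb_1$, exactly as $\hatx=1$ lies in the interior of the support of $\hatb_2$. Arguing as in Lemma~\ref{lemma:new_lemma} (translated by one unit), I get $\hatb_1^{as}(0,\haty)=h\,\hat l(\haty)+O(h\delta_n)$ for $\haty\in(a,b)$. Here $\hat l(\haty)\ge 1/2$ up to the factor convention used in the previous lemma, so this value is of size $h$.

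\textbf{Right edge of $R_L$.} At $\hatx=\hatnep$ I use the expansion $\hatb_h^{as}=\hatb_h^0+{}$correctors. The zero-order term is
\[
\hatb_h^0(\hatnep,\haty)=h\,\hat l(\haty)\big[(\otab)-(\oab)\hatnep\big],
\]
so its modulus is at most $3\rho h$. For the correctors:
\begin{itemize}
\item the $\varphi$ and $\zeta$ correctors are of size $h(1+|\alpha|+|\beta|)e^{-c/(4\sqrt{\hat\epsilon})}$, since $\haty\ge 1/4$ (Lemmas~\ref{Temam:Lemma1} and~\ref{Temam:Lemma5});
\item the $\xi$ correctors are e.s.t.\ away from the right corners (Lemma~\ref{Temam:Ap});
\item the $\theta$ correctors of $\hatb_2$ and $\hatb_\hatT$ are $-\hatb^0_\ast(0,\haty)e^{-\hatx/\hat\epsilon}$, which is $O(e^{-n})$ times $h(|\alpha|+|\beta|)$ at $\hatx=\hatnep$.
\end{itemize}
The term $\hatb_1$ contributes no $\theta$ corrector at $\hatx=0$, because $\hatx=0$ is interior to its support.

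\textbf{Lower bound on the jump.} Combining the two edges, the jump is at least
\[
h/2-3\rho h-Ch(1+|\alpha|+|\beta|)(e^{-cn}+\est).
\]
The hypothesis $\max\{|\oab|,|\otab|\}\le\rho$ forces $|\alpha|\le 2\rho$ and $|1+\beta|\le 3\rho$, so $1+|\alpha|+|\beta|\lesssim 1$. Choosing $\rho$ small and then $n$ large makes the jump at least $ch\gtrsim h(1+|\alpha|+|\beta|)$. The choice of $n$ must respect $\hatnep\le 1/2$, which holds once $\hat\epsilon\le\hat\epsilon_0$. This yields the claimed bound $h^2(1+|\alpha|+|\beta|)^2/\hat\epsilon$.

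\textbf{Main obstacle.} The hardest step is making the corrector estimates at $\hatx=\hatnep$ rigorous, in particular the $\theta$ correctors. These are exactly what produce the steep elliptic layer, so one must argue that at $\hatx=\hatnep$ they have decayed by the factor $e^{-n}$ while remaining uniformly controlled in $\haty$. A second difficulty is the consistent use of the $h$ scaling of $\hat l$, so that the left-edge value $h\,\hat l$ genuinely dominates $\rho h$.
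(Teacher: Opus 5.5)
Your proposal is correct and follows essentially the same route as the paper: the fundamental theorem of calculus in $\hatx$ across $R_L=(0,\hatnep)\times(1/4,1/2)$; the left-edge value $\hatb_h^{as}(0,\haty)=\hatb_1^{as}(0,\haty)\approx h\,\hat l(\haty)$ via Lemma~\ref{lemma:new_lemma}; and an $O(\rho h+h\delta_n)$ bound at $\hatx=\hatnep$ from the zero-order term and the corrector estimates, with $1+|\alpha|+|\beta|\lesssim 1$ from the hypothesis. You should say explicitly that it is the expansions $\hatb_2^{as},\hatb_\hatT^{as}$, not just the exact bubbles, that vanish at $\hatx=0$; this holds by construction, since there $\theta$ cancels $u^0$, $\zeta$ cancels $\varphi$, and $\xi=0$. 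Your reverse-triangle-inequality lower bound on the jump is, if anything, cleaner than the paper's squared-difference step.
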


\begin{proof}
Since
$|1+2\alpha+\beta|\leq\rho$ and $|1+\alpha+\beta|\leq\rho$, we have that
\begin{equation*}
    |\alpha|< 2\rho, \quad |1+\beta|< 3\rho,
    \quad\text{and thus}\quad  
    1+|\alpha|+|\beta| \lesssim 1 
    .
\end{equation*}

Analogous to the proof of Lemma~\ref{Lemma:bubble_bound_inf_1}, but considering derivative in the $\hatx$-direction, we obtain
\begin{equation}
\begin{aligned}\label{eq:int_RL}
    \int_{R_L}|\hatN \hatb_h^{as}|^2&
    \geq\int_{R_L}|\partial_\hatx \hatb_h^{as}|^2
    \geq \frac{1}{n\hat{\epsilon}}\int_a^b(|\hatb_h^{as}(n\hat{\epsilon},\haty) - \hatb_h^{as}(0,\haty)|)^2 d\haty. 
\end{aligned}
\end{equation}

Let us analyze the value of $\hatb_h^{as}$ on the left and right side of $R_L$. For $\hatx=0$, we consider the analysis in \cite[Eq. 59]{GieEtAl:13} for the value in the boundary of the support of each bubble. There it is established that the asymptotic approximation of $\hatb_2$ and $b_T$ vanish at $\hat{x}=0$. Therefore,
\begin{equation*}
    \hatb_h^{as}(0,\haty)=\hatb_1^{as}(0,\haty) + \alpha \hatb_2^{as}(0,\haty) +\beta \hatb_\hatT^{as}(0,\haty) = \hatb_1^{as}(0,\haty).
\end{equation*}
Then, using Lemma~\ref{lemma:new_lemma}, we get
\begin{equation*}
    |b_h^{as}(0,y) - hl(y)|\leq h\delta_n,
\end{equation*}
where, as before, $\delta_n := e^{-cn}/n+e^{-cn^2}+\est$, with $\delta_n^2 \le 1/4$. Finally, for $\haty \in (a,b),$ we obtain
\begin{equation}\label{eq:bh_0}
    |\hatb_h^{as}(0,\haty)| \geq \frac{1}{2} - h\delta_n.
\end{equation}

Now, let us bound the value of $\hatb_h^{as}$ for $\hatx=n\hat{\epsilon}.$ The analysis is similar to the one done in order to bound $\hatb(\hatx,\hatnsep)$ in Lemma~\ref{Lemma:bubble_bound_inf_1}.


The difference between the $\hatb_h^{as}$ and its $0$-th order approximation can be studied, analyzing each of its correctors:
\begin{equation}
\label{eq:bh_neps_correctors}    
    |\hatb_h^{as}(n\hat{\epsilon},\haty)-\hatb_h^0(n\hat{\epsilon},\haty)|\leq
    |\varphi_B(n\hat{\epsilon},\haty)| + |\xi_B(n\hat{\epsilon},\haty)| + |\theta_B(n\hat{\epsilon},\haty)| + |\zeta_B(n\hat{\epsilon},\haty)|.
\end{equation}
By Lemma~\ref{Temam:Lemma1}, Lemma~\ref{Temam:Lemma3}  and Lemma~\ref{Temam:Lemma5},  considering that $\haty\in[1/4,1/2]$, and $1+|\alpha|+|\beta|\lesssim 1$
\begin{equation}
\begin{aligned}
\label{eq:boundACD_3}
    &|\varphi_B(\hatnep,\haty)| \lesssim h(1+|\alpha|+|\beta|)\frac{\exp(-c)}{4\sqrt{\hat{\epsilon}}}
    \lesssim h\est,\\
    &|\theta_B(\hatnep,\haty)|\lesssim h(1+|\alpha|+|\beta|)\exp(-\frac{cn\hat{\epsilon}}{\hat{\epsilon}})\lesssim h\exp(-cn)
    \lesssim h\delta_n,
    \\
    &|\zeta_B(\hatnep,\haty)|\lesssim h(1+|\alpha|+|\beta|)\exp(-c(\frac{n\hat{\epsilon}}{\hat{\epsilon}}+\frac{1}{4\sqrt{\hat{\epsilon}}}))
    \lesssim h\est
\end{aligned}
\end{equation}
Using the first bound of Lemma~\ref{Temam:Ap}, we obtain
\begin{equation}
\label{eq:boundB_3}
\begin{aligned}
    |\xi_B(\hatnep,\haty)|&\lesssim h(1+|\alpha|+|\beta|)
    \exp(-c\frac{\sqrt{(1-\hatnep)^2+\haty^2}-(1-\hatnep)}{2\hat{\epsilon}})\\
    & \lesssim 
    h\exp(-c\frac{1-\hatnep}{2\hat{\epsilon}}\frac{\haty^2}{2(1-\hatnep)^2})
    \lesssim
    h\exp(-c\frac{1-\hatnep}{2\hat{\epsilon}}\frac{1}{4})
    \lesssim h\est
\end{aligned}
\end{equation}
Therefore, from Eqs. \eqref{eq:bh_neps_correctors}--\eqref{eq:boundB_3}, we get
\begin{equation}\label{eq:bhas-b0}
\begin{aligned}
|\hatb_h^{as}(n\hat{\epsilon},\haty)-\hatb_h^0(n\hat{\epsilon},\haty)|
\lesssim h\delta_n.
\end{aligned}
\end{equation}
Even more, because of the hypothesis, for $\haty\in(1/4,1/2)$, we get
\begin{equation}
\label{eq:bound b0}
|\hatb_h^0(n\hat{\epsilon},\haty)| = 
    |(\otab -(\oab)n\hat{\epsilon})h\,\hat l(\haty)|\leq h\frac32\rho .
\end{equation}
From Eqs.~\eqref{eq:bhas-b0} and \eqref{eq:bound b0}, we obtain
\begin{equation*}
\begin{aligned} 
|\hatb_h^{as}(\hatnep,\haty)|&
\leq |\hatb_h^{as}(\hatnep,\haty)-\hatb_h^0(n\hat{\epsilon},\haty)|
+ |\hatb_h^0(n\hat{\epsilon},\haty)|
\leq Ch\delta_n + \frac32h\rho
\le \tilde Ch \rho,
\end{aligned}
\end{equation*}
choosing $\delta_n \le \rho$.
Returning to~\eqref{eq:int_RL} we establish,
using~\eqref{eq:bh_0}
\begin{equation*}
    \begin{aligned}
    \int_{R_L}|\hatN \hatb_h^{as}|^2
    &\geq \frac{1}{n\hat{\epsilon}}\int_{1/4}^{1/2}|\hatb_h^{as}(n\hat{\epsilon},\haty) - \hatb_h^{as}(0,\haty)|^2 d\haty\\
    &\geq \frac{1}{n\hat{\epsilon}}\int_{1/4}^{1/2}|\hatb_h^{as}(0,\haty)|^2 - 4|\hatb_h^{as}(\hatnep,\haty)|^2 d\haty\\
    &\geq \frac{1}{n\hat{\epsilon}}\frac{1}{4}\Big((\frac{1}{2}-h\delta_n)^2 - 4\tilde{C}^2h^2\rho^2\Big)\gtrsim \frac{h^2}{\hat{\epsilon}}
    \gtrsim \frac{h^2(1+|\alpha|+|\beta|)^2}{\hat{\epsilon}}
    ,
    \end{aligned}
\end{equation*}
if $\rho$ is sufficiently small,
as we wanted to prove.
\end{proof}
\begin{propo}\label{prop:bubble_bound_inf}
Let $\hatb_h= \hatb_1 + \alpha \hatb_2 + \beta \hatb_T$ on $\hatT$ for some $\alpha,\beta\in \rz$.
Then,
\begin{equation*}
\int_\hatT|\hatN b_h^{as}|^2 \gtrsim
\frac{h^2(1+|\alpha|+|\beta|)^2}{\sqrt{\hat{\epsilon}}}.
\end{equation*}  
\end{propo}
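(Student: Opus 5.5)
The plan is a case split on the size of $\max\{|\oab|,|\otab|\}$ relative to the fixed small threshold $\rho>0$ used in Lemmas~\ref{Lemma:bubble_bound_inf_1} and~\ref{Lemma:bubble_bound_inf_2}. This $\rho$ is chosen once so that the proof of Lemma~\ref{Lemma:bubble_bound_inf_2} goes through, i.e.\ the term $4\tilde C^2h^2\rho^2$ is dominated by $(\tfrac12-h\delta_n)^2$. With $\rho$ fixed, the integer $n$ is chosen large enough that the requirements of both lemmas hold: $\delta_n^2\le C_2/C_1$ in the first, and $\delta_n\le\rho$, $\delta_n^2\le 1/4$ in the second. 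Both choices are independent of $\hat{\epsilon}$. We then restrict to $\hat{\epsilon}$ small enough that $\hatnsep\le\frac12$ and $\hatnep\le\frac12$, which is compatible with the standing assumption $\epsilon/h\le\hat{\epsilon}_0$.

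\emph{Case 1:} $\max\{|\oab|,|\otab|\}>\rho$. Lemma~\ref{Lemma:bubble_bound_inf_1} gives directly
\[
\int_\hatT|\hatN \hatb_h^{as}|^2\ge\int_{R_B}|\hatN \hatb_h^{as}|^2\gtrsim \frac{h^2(1+|\alpha|+|\beta|)^2}{\sqrt{\hat{\epsilon}}},
\]
which is the claim.

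\emph{Case 2:} $\max\{|\oab|,|\otab|\}\le\rho$. Lemma~\ref{Lemma:bubble_bound_inf_2} gives the lower bound $h^2(1+|\alpha|+|\beta|)^2/\hat{\epsilon}$. Since $0<\hat{\epsilon}<1$, we have $1/\hat{\epsilon}\ge 1/\sqrt{\hat{\epsilon}}$, so this bound is even stronger than required.

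Combining the two cases, with the hidden constant taken as the minimum of the two constants from the lemmas, yields the proposition. The main point to verify is that the constants in both lemmas are independent of $\alpha$, $\beta$ and $\hat{\epsilon}$, and that the choices of $\rho$ and $n$ are made in a consistent order: first $\rho$, then $n$, then the smallness of $\hat{\epsilon}$. Once this order is fixed the argument is just the case distinction above, because the two regimes together cover all $(\alpha,\beta)\in\rz^2$.
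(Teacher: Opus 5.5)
Your proposal is correct and follows the paper's proof, which simply combines Lemmas~\ref{Lemma:bubble_bound_inf_1} and~\ref{Lemma:bubble_bound_inf_2} through the case split on $\max\{|1+\alpha+\beta|,|1+2\alpha+\beta|\}$ versus $\rho$. Your order of choices (first $\rho$, then $n$, then smallness of $\hat{\epsilon}$) and the observation $\hat{\epsilon}^{-1}\ge\hat{\epsilon}^{-1/2}$ for $\hat{\epsilon}<1$ spell out what the paper leaves implicit.
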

\begin{proof}
The proof follows from Lemmas~\ref{Lemma:bubble_bound_inf_1} and~\ref{Lemma:bubble_bound_inf_2}. 
\end{proof}
\begin{propo}\label{propo: bh_inf_bound}
  Let $\hatb_h= \hatb_1 + \alpha \hatb_2 + \beta \hatb_T$ on $\hatT$ for some $\alpha,\beta\in \rz$. Then
  \begin{equation*}
  \frac{h^2(1+|\alpha|+|\beta|)^2}{\sqrt{\hat{\epsilon}}}
  \lesssim \|\hatN \hatb_h \|_\hatT^2 
  \quad\text{and}\quad
      \left\| \frac{\hat{b}_1(0,\haty)}{\haty} \right\|_{\{\hat{x}=0\}}^2 \lesssim
      \|\hatN \hatb_h \|_\hatT^2.
  \end{equation*}
\end{propo}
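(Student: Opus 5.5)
The plan is to pass from the asymptotic expansion $\hatb_h^{as}$, for which Proposition~\ref{prop:bubble_bound_inf} already gives a lower bound, to the exact function $\hatb_h$. The second inequality will then follow from the first together with Lemma~\ref{Lemma:bubble_bound_sup}.

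For the first inequality, I write $\hatb_h = \hatb_h^{as} + (\hatb_h - \hatb_h^{as})$. By the triangle inequality,
\[
\|\hatN \hatb_h\|_\hatT \ge \|\hatN \hatb_h^{as}\|_\hatT - \|\hatN(\hatb_h-\hatb_h^{as})\|_\hatT .
\]
Proposition~\ref{prop:bubble_bound_inf} bounds the first term from below by $c\,h(1+|\alpha|+|\beta|)\hat\epsilon^{-1/4}$.

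For the remainder, linearity gives
\[
\hatb_h-\hatb_h^{as}=(\hatb_1-\hatb_1^{as})+\alpha(\hatb_2-\hatb_2^{as})+\beta(\hatb_\hatT-\hatb_\hatT^{as}).
\]
Each summand is the error of the Gie--Jung--Temam expansion for a problem with data $h\hat l$ and diffusion $\hat\epsilon$. Lemma~\ref{Temam:ErrorBound}, applied on the reference domains (as in the proof of Lemma~\ref{Lemma:8}), gives
\[
\|\hat\psi-\hat\psi^{as}\|_{\hat\epsilon,\hat\Omega_\lambda}\lesssim h\hat\epsilon,
\]
and hence $\|\hatN(\hat\psi-\hat\psi^{as})\|\lesssim h\hat\epsilon^{1/2}$. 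Therefore
\[
\|\hatN(\hatb_h-\hatb_h^{as})\|_\hatT\lesssim h(1+|\alpha|+|\beta|)\hat\epsilon^{1/2}.
\]
Since $\hat\epsilon^{1/2}\ll\hat\epsilon^{-1/4}$, choosing $\hat\epsilon\le\hat\epsilon_0$ small enough absorbs this term. That yields $\|\hatN\hatb_h\|_\hatT\gtrsim h(1+|\alpha|+|\beta|)\hat\epsilon^{-1/4}$, and squaring gives the first claim.

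\textbf{Main obstacle.} The weak point is the smallness restriction on $\hat\epsilon$; it is exactly where the $\hat\epsilon_0$ in Lemma~\ref{Lemma:12} originates. A further subtlety is that the $\hat\epsilon$-norm estimate controls $\hat\epsilon^{1/2}\|\hatN\cdot\|$ on the whole patch support, while we only need it on $\hatT$; restricting to $\hatT$ is harmless because it only decreases the norm. If Lemma~\ref{Temam:ErrorBound} needs the right-hand side $\hat l$ to be smooth in $\hat y$, that is satisfied, since $\hat l$ is affine.

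For the second inequality, Lemma~\ref{Lemma:bubble_bound_sup} gives
\[
\left\|\hat b_1(0,\haty)/\haty\right\|^2_{\{\hatx=0\}}\lesssim\hat\epsilon^{-1/2}.
\]
Note that $\hat b_1$ is itself multiplied by $h$ through the data $h\hat l$, so the sharp form of that bound carries a factor $h^2$. This is seen by linearity: $\hat b_1/h$ solves the problem with data $\hat l$, to which the Kellogg--Stynes derivative bound applies. Hence the left side is $\lesssim h^2\hat\epsilon^{-1/2}\le h^2(1+|\alpha|+|\beta|)^2\hat\epsilon^{-1/2}$, which the first inequality bounds by $\|\hatN\hatb_h\|_\hatT^2$. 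This is where I would take care to state the scaling of Lemma~\ref{Lemma:bubble_bound_sup} with the factor $h^2$ explicitly.
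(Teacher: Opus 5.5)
Your proposal is correct and follows the paper's proof step by step. You use the triangle inequality, the lower bound of Proposition~\ref{prop:bubble_bound_inf} for $\hat{b}_h^{as}$, and the gradient error $\lesssim h(1+|\alpha|+|\beta|)\hat{\epsilon}^{1/2}$ from Lemma~\ref{Temam:ErrorBound} with absorption for small $\hat{\epsilon}$, then Lemma~\ref{Lemma:bubble_bound_sup} for the second claim. Your remarks on the needed smallness of $\hat{\epsilon}$ and on the factor $h^2$ in Lemma~\ref{Lemma:bubble_bound_sup} (recovered by linearity from the data $h\,\hat{l}$) are correct and make explicit what the paper leaves implicit.
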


\begin{proof}
By the triangle inequality
\begin{equation*}
    \|\hatN \hatb_h \|_\hatT \ge \|\hatN \hatb_h^{as} \|_\hatT
    - \|\hatN (\hatb_h - \hatb_h^{as})\|_\hatT,
\end{equation*}
and by Theorem~\ref{Temam:ErrorBound}, since
$\hatb_h - \hatb_h^{as}  = \hatb_1-\hatb_1^{as} + \alpha(\hatb_2- \hatb_2^{as}) + \beta (\hatb_\hatT-\hatb_\hatT^{as})$ we have
\begin{equation*}
    \|\hatN (\hatb_h - \hatb_h^{as})\|
    \lesssim h(1+|\alpha|+|\beta|)\sqrt{\hat{\epsilon}}
   .
\end{equation*}
Therefore, using the estimates from Proposition~\ref{prop:bubble_bound_inf}, we obtain
\begin{equation*}
\label{eq:grad_bh_inf}
\begin{split}
    \|\hatN \hatb_h \|_\hatT &\ge \|\hatN \hatb_h^{as} \|_\hatT
    -C h(1+|\alpha|+|\beta|)\sqrt{\hat{\epsilon}}
    \\
    &\gtrsim
    h(1+|\alpha|+|\beta|) (
    \hat{\epsilon}^{-1/4} - \hat{\epsilon}^{1/2} )
   \gtrsim h(1+|\alpha|+|\beta|)\hat{\epsilon}^{-1/4}.
    \end{split}
\end{equation*}
This is the first assertion of the Proposition, the second one follows from this last estimate and Lemma~\ref{Lemma:bubble_bound_sup}.
\end{proof}

\begin{lemma}[Strengthened Cauchy-Schwarz inequality]
\label{lemma:Streng_CS}
Let ${v_h = q_h + b_h}\in V_h$ with $q_h \in V_L$  and $b_h \in V_B$. In $T\in \T$, holds
\begin{equation*}
    \int_\hatT\hatN \hat{q}_h \cdot \hatN \hat{b}_h \lesssim \hat{\epsilon}^{1/4}\|\hatN \hat{q}_h\| \|\hatN \hat{b}_h\|.
\end{equation*}
\end{lemma}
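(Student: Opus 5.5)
The plan is to integrate by parts on the reference element, exploit that $\hat q_h$ is bilinear and hence harmonic, and then compare the resulting boundary terms with the lower bound for $\|\hatN \hatb_h\|_\hatT$ from Proposition~\ref{propo: bh_inf_bound}. As in the rest of this appendix, I take $\hatb_h=\hatb_1+\alpha\hatb_2+\beta\hatb_\hatT$ on $\hatT$.

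\textbf{Step 1 (integration by parts).} Since $\hat q_h\in\Q^1$ on $\hatT$, we have $\hatD\hat q_h=0$. Green's formula then gives
\[
\int_\hatT \hatN\hat q_h\cdot\hatN\hatb_h=\int_{\partial\hatT}\partial_{\hat n}\hat q_h\,\hatb_h .
\]
The element bubble $\hatb_\hatT$ vanishes on all of $\partial\hatT$. The patch bubble $\hatb_1$ vanishes on $\partial\hatT$ except on the left side $\{\hatx=0\}$, and $\hatb_2$ vanishes except on the right side $\{\hatx=1\}$. Moreover, the horizontal sides lie either on the boundary of the supports or on $\partial\Omega$. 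Hence
\[
\int_\hatT \hatN\hat q_h\cdot\hatN\hatb_h=-\int_0^1\partial_\hatx\hat q_h(0,\haty)\,\hatb_1(0,\haty)\,d\haty+\alpha\int_0^1\partial_\hatx\hat q_h(1,\haty)\,\hatb_2(1,\haty)\,d\haty .
\]

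\textbf{Step 2 (bounding the traces).} On each side, $\partial_\hatx\hat q_h$ is affine in $\haty$. By equivalence of norms on the finite dimensional space of bilinears modulo constants, $\|\partial_\hatx\hat q_h\|_{L^\infty(\partial\hatT)}\lesssim\|\hatN\hat q_h\|_\hatT$. For the bubbles I use the maximum principle on their (reference) supports, comparing with the barrier $h(x_{\max}-\hatx)$. This is a supersolution because $-\hat\epsilon\hatD-\partial_\hatx$ applied to it gives $h\ge h\hat l$. It yields $|\hatb_i|\le 2h$, so $\int_0^1|\hatb_i|\lesssim h$. Putting these together,
\[
\Big|\int_\hatT \hatN\hat q_h\cdot\hatN\hatb_h\Big|\lesssim h(1+|\alpha|)\,\|\hatN\hat q_h\|_\hatT\le h(1+|\alpha|+|\beta|)\,\|\hatN\hat q_h\|_\hatT .
\]

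\textbf{Step 3 (using the inverse-type lower bound).} The first assertion of Proposition~\ref{propo: bh_inf_bound} gives $h(1+|\alpha|+|\beta|)\lesssim\hat\epsilon^{1/4}\|\hatN\hatb_h\|_\hatT$ for $\hat\epsilon$ small enough. Substituting this into Step~2 gives the claimed strengthened Cauchy--Schwarz inequality. If the paper's generic normalization is used, in which $\hatb_h$ has an arbitrary coefficient $c$ in front of $\hatb_1$, both sides scale linearly in the coefficients. The case where the coefficient of $\hatb_1$ is $0$ is handled the same way: apply the proposition to the rescaled combination, or note that with only $\hatb_2$ and $\hatb_\hatT$ the mirror version of the argument applies.

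\textbf{Main obstacle.} The delicate ingredient is Step~3, namely that $\|\hatN\hatb_h\|$ cannot be small relative to the size $h(1+|\alpha|+|\beta|)$ of the coefficients, even when cancellations occur between $\hatb_1$, $\hatb_2$ and $\hatb_\hatT$. That is exactly what the case distinction on $\max\{|\oab|,|\otab|\}$ in Lemmas~\ref{Lemma:bubble_bound_inf_1}--\ref{Lemma:bubble_bound_inf_2} provides, and I would rely on it as stated. The only extra care needed here is in Step~1: checking which bubbles have nonzero traces on which sides of $\hatT$ for elements on the top and bottom rows, including those adjacent to the outflow corners.
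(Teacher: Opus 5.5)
Your proposal is correct and takes essentially the same route as the paper. You integrate by parts using $\hat\Delta\hat q_h=0$, keep only the traces of $\hat b_1$ on $\{\hat x=0\}$ and of $\alpha\hat b_2$ on $\{\hat x=1\}$, bound them by the maximum principle, and absorb $1+|\alpha|+|\beta|$ through the first assertion of Proposition~\ref{propo: bh_inf_bound}. Your barrier $h(x_{\max}-\hat x)$ also keeps the factor $h$ that the paper's displayed bound $0\le\hat b_2\le(2-\hat x)\hat l(\hat y)$ drops, and this is what makes the final combination with $h(1+|\alpha|+|\beta|)\lesssim\hat\epsilon^{1/4}\|\hat\nabla\hat b_h\|$ dimensionally consistent.
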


\begin{proof}

Since $\hat{q}_h$ is a bilinear function,  $\hatD \hat{q}_h = 0$.
 On the other hand, by definition,  we can decompose $\hatb_h = \hatb_1 + \alpha \hatb_2+\beta \hatb_T$. Since each bubble vanishes at the boundary of its support, we get
\begin{equation*}
\hatb_h(\hatx,0) = \hatb_h(\hatx,1) = 0.
\end{equation*}
Additionally, 
\begin{equation*}
    \hatb_h(0,\haty) = \hatb_1(0,\haty), \qquad
    \hatb_h(1,y) = \alpha \hatb_2(1,\haty).
\end{equation*}

By the maximum principle, we obtain that $0\leq \hatb_2(\hatx,\haty)\leq (2-\hatx)\,\hat l(\haty)$ and $0\leq \hatb_1(\hatx,\haty)\leq (1-\hatx)\, \hat l(\haty)$ in $[0,2]\times[0,1]$ and $[-1,1]\times[0,1]$, respectively.

Then,  integrating by parts, we get
\begin{align*}
    \int_\hatT \hatN \hat{q}_h \cdot\hatN \hatb_h & = 
    \int_\hatT - \hatD \hat{q}_h \, \hat{b}_h
    + \int_{\partial \hatT} \partial_\nu \hat{q}_h  \, \hat{b}_h
    \le
    \int_{\{\hatx=0\}} |\hatN \hat{q}_h| \, 1 + \int_{\{\hatx=1\}} |\hatN \hat{q}_h| \,|\alpha|
    \\
    &\lesssim \| \hatN  \hat{q}_h\| 
    (1+|\alpha|+|\beta|)
    \frac{\hat{\epsilon}^{1/4}}{\hat{\epsilon}^{1/4}}
    \lesssim \hat{\epsilon}^{1/4} \| \hatN \hat{q}_h \| \, \| \hatN \hat{b}_h\|.
\end{align*}
Where the last inequality holds by Proposition~\ref{propo: bh_inf_bound}.
\end{proof}

Then we finally get

\begin{propo}
\label{Propo:AngleSpace2}
There is $\hat{\epsilon}_0, 0 < \hat{\epsilon}_0 < 1$ such that for
${v_h = q_h + b_h}\in V_h$ with $q_h \in V_L$  and $b_h \in V_B$ it holds
\begin{equation*}
    \|\nabla q_h\|_T  + \|\nabla b_h\|_T \lesssim \|\nabla v_h\|_T ,
\end{equation*}
for all $T\in \T$ and $0<\epsilon / h <hat{\epsilon}_0$.
\end{propo}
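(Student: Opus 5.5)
The plan is to derive the proposition from the strengthened Cauchy--Schwarz inequality of Lemma~\ref{lemma:Streng_CS}, which gives the angle condition between the bilinear part and the bubble part on each element. First I will reduce everything to the reference element $\hatT$. For the element--wise norms $\|\nabla\cdot\|_T$ the scaling $\Phi_{i,j}$ only produces the same factor on both sides: in two dimensions $\|\nabla w\|_T=\|\hatN\hat w\|_\hatT$. It therefore suffices to show
\[
\|\hatN \hat q_h\|_\hatT+\|\hatN\hatb_h\|_\hatT\lesssim \|\hatN \hat v_h\|_\hatT
\]
with a constant independent of $\hat\epsilon$, $h$, $\alpha$ and $\beta$.

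Next I will expand the square. Writing $\hat v_h=\hat q_h+\hatb_h$,
\[
\|\hatN\hat v_h\|^2=\|\hatN\hat q_h\|^2+\|\hatN\hatb_h\|^2+2\int_\hatT\hatN\hat q_h\cdot\hatN\hatb_h .
\]
Lemma~\ref{lemma:Streng_CS} bounds the absolute value of the cross term by $C\hat\epsilon^{1/4}\|\hatN\hat q_h\|\,\|\hatN\hatb_h\|$. By Young's inequality this is at most $C\hat\epsilon^{1/4}\big(\|\hatN\hat q_h\|^2+\|\hatN\hatb_h\|^2\big)$. Hence
\[
\|\hatN\hat v_h\|^2\ge (1-C\hat\epsilon^{1/4})\big(\|\hatN\hat q_h\|^2+\|\hatN\hatb_h\|^2\big).
\]
I will choose $\hat\epsilon_0$ so that $C\hat\epsilon_0^{1/4}\le 1/2$. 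Then $\|\hatN\hat q_h\|+\|\hatN\hatb_h\|\le 2\|\hatN\hat v_h\|$, which gives the claim after scaling back.

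The main point to check is that Lemma~\ref{lemma:Streng_CS}, and through it Proposition~\ref{propo: bh_inf_bound}, applies with constants uniform on every element. This includes the element configurations that are not of the form $\hatb_1+\alpha\hatb_2+\beta\hatb_T$ used in Appendix~\ref{Appendix:aux}. If the coefficient of $\hatb_1$ is a general $\gamma\neq0$, I will normalize by homogeneity: divide $v_h$ by $\gamma$, since the inequality is invariant under scaling $q_h$ and $b_h$ jointly. If $\gamma=0$, I will rerun the integration by parts of Lemma~\ref{lemma:Streng_CS}, using only the $\hatx=1$ boundary term and the analogous lower bound for $\|\hatN\hatb_h\|$.

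Elements where $b_h$ vanishes are trivial, since then $v_h=q_h$. The same holds for interior elements carrying no bubbles in $\Lambda$. For elements on the outflow side $\Lambda_L$, the bubbles are patch bubbles on horizontal edges together with the element bubble. There I expect the same argument to work with the roles of $\hatx$ and $\haty$ adapted: the elliptic layer of width $\hat\epsilon$ at $\hatx=0$ gives $\|\hatN\hatb_h\|\gtrsim h(\cdot)\hat\epsilon^{-1/2}$, which is even stronger. Finally, $\hat\epsilon_0$ is taken as the minimum of the finitely many thresholds arising in these cases and in Lemmas~\ref{Lemma:bubble_bound_inf_1}--\ref{Lemma:bubble_bound_inf_2}.
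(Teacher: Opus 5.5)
Your core argument is the paper's: expand $\|\nabla v_h\|_T^2$, bound the cross term with Lemma~\ref{lemma:Streng_CS}, and absorb it for $\hat\epsilon$ small. Your factor $1-C\hat\epsilon^{1/4}$ is the one that actually follows from that lemma; the paper writes $1-\hat\epsilon^{1/2}$.

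Your additional uniformity discussion goes beyond what the paper checks, and two of its claims need fixing:

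\begin{itemize}
\item Interior elements are not bubble-free. $V_B$ contains $B_T$ for every $T$, and $\Lambda$ only indexes the interpolant $u_B$. The estimate is still trivial there, but for a different reason: $b_h=\beta\psi_T\in H^1_0(T)$ and $\Delta q_h=0$, so the cross term vanishes identically.
\item On the outflow elements, the bound $\|\hat\nabla\hat b_h\|\gtrsim h(\cdot)\hat\epsilon^{-1/2}$ fails when the coefficients of the two patch bubbles and the element bubble sum to zero, because the elliptic layer then cancels. What remains is order $\hat\epsilon^{-1/4}$ from the parabolic layers along the horizontal edges, which still suffices.
\end{itemize}
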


 \begin{proof}
 With Lemma \ref{lemma:Streng_CS} we get
 \begin{align*}
 \|\nabla v_h\|_T^2 &= \| \nabla (q_h + b_h)\|^2 
 =
 \|\nabla q_h\|_T^2 +\|\nabla b_h\|_T^2 + 2 (\nabla q_h,\nabla b_h)_T
  \\
  &\gtrsim (1- \hat{\epsilon}^{1/2}) (\|\nabla q_h\|_T^2 +\|\nabla b_h\|_T^2 )
  \gtrsim \|\nabla q_h\|_T^2 +\|\nabla b_h\|_T^2,
 \end{align*}
if $\hat{\epsilon}$ is small enough.
 \end{proof}

\end{appendix}

\bibliographystyle{abbrv}
\bibliography{biblioRFB}

@article{BMZ,
      title={Patch bubbles for advection-dominated steady
      and unsteady problems}, 
      author={Eberhard Bänsch and Pedro Morin and Itatí Zocola},
      year={2025},
      eprint={2504.21835},
      journal={arXiv:2504.21835},
      primaryClass={math.NA},
      url={https://arxiv.org/abs/2504.21835}, 
}

@article{Brezzi1994,
  author  = {Brezzi, F. and Russo, A},
  title   = {Choosing bubbles for advection-diffusion problems},
  journal = {Math. Models Methods Appl.},
  volume  = {04},
  number  = {04},
  pages   = {571--587},
  year    = {1994},
  doi     = {https://doi.org/10.1142/S0218202594000327}
}

@article{Brezzi1999,
  author  = {Brezzi, F. and Hughes,  T. J. R. and  Marini, L. D. and  Russo, A. and S\"uli, E.},
  title   = {A priori error analysis of residual-free bubbles for advection-diffusion problems},
  journal = {SIAM J. Numer. Anal},
  volume  = {36},
  number  = {6},
  pages   = {1933--1948},
  year    = {1999}
}

@article{Cangiani2005long,
  author  = {Cangiani, A. and S\"uli, E.},
  title   = {Enhaced {RFB} method},
  journal = {Numerische Mathematik},
  volume  = {101},
  number  = {},
  pages   = {273--308},
  year    = {2005},
  doi     = {10.1007/s00211-005-0620-7}
}

@article{Cangiani2005short,
  author  = {Cangiani, A. and S\"uli, E.},
  title   = {Enhanced residual-free bubble method for convection–diffusion problems},
  journal = {International Journal for Numerical Methods in Fluids},
  volume  = {47},
  number  = {},
  pages   = {1307--1313},
  year    = {2005},
  doi     = {10.1002/fld.859}
}

@article{BFHR1997,
  author  = {Brezzi, F. and Franca, L.P. and Hughes, T. and Russo, A.},
  title = {$b=\int g$},
  year    = {1997},
  month   = {6},
  pages   = {329--339},
  volume  = {145},
  journal = {Computer Methods in Applied Mechanics and Engineering}
}

@article{Russo1997,
  author  = {Franca, L. and Russo, A.},
  year    = {1997},
  month   = {11},
  pages   = {},
  title   = {Approximation of the {S}tokes {P}roblem by 
{R}esidual-{F}ree {M}acro {B}ubbles},
  volume  = {4},
  journal = {East-West Journal of Numerical Mathematics}
}

@article{Franca1998,
  author  = {L.P. Franca and A. Nesliturk and M. Stynes},
  title   = {On the stability of residual-free bubbles for convection-diffusion problems and their approximation by a two-level finite element method},
  journal = {Computer Methods in Applied Mechanics and Engineering},
  volume  = {166},
  number  = {1},
  pages   = {35-49},
  year    = {1998},
  note    = {Advances in Stabilized Methods in Computational Mechanics},
  issn    = {0045-7825},
  doi     = {https://doi.org/10.1016/S0045-7825(98)00081-4}
}

@phdthesis{thesisSchieweck,
  author = {F. Schieweck},
  title  = {Eine asymptotisch angepa{\ss}te Finite-Element-Methode f\"ur singul\"ar gest\"orte elliptische Randwertaufgaben},
  school = {Wiss. Z. Techn. Universität Magdeburg},
  year   = 1987
}

@article {GieEtAl:13,
    AUTHOR = {Gie, Gung-Min and Jung, Chang-Yeol and Temam, Roger},
     TITLE = {Analysis of mixed elliptic and parabolic boundary layers with
              corners},
   JOURNAL = {Int. J. Differ. Equ.},
  FJOURNAL = {International Journal of Differential Equations},
      YEAR = {2013},
     PAGES = {Art. ID 532987, 13},
      ISSN = {1687-9643},
   MRCLASS = {35J25 (35B25 35C20)},
  MRNUMBER = {3055164},
       DOI = {10.1155/2013/532987},
       URL = {https://doi.org/10.1155/2013/532987}
}

@article {ShihKellogg:87,
    AUTHOR = {Shih, Shagi-Di and Kellogg, R. Bruce},
     TITLE = {Asymptotic analysis of a singular perturbation problem},
   JOURNAL = {SIAM J. Math. Anal.},
  FJOURNAL = {SIAM Journal on Mathematical Analysis},
    VOLUME = {18},
      YEAR = {1987},
    NUMBER = {5},
     PAGES = {1467--1511},
      ISSN = {0036-1410},
   MRCLASS = {35B25 (35C20 35J25 76W05)},
  MRNUMBER = {902346}
}

@article {KelloggStynes:05, 
    AUTHOR = {Kellogg, R. Bruce and Stynes, Martin},
     TITLE = {Corner singularities and boundary layers in a simple
              convection-diffusion problem},
   JOURNAL = {J. Differential Equations},
  FJOURNAL = {Journal of Differential Equations},
    VOLUME = {213},
      YEAR = {2005},
    NUMBER = {1},
     PAGES = {81--120},
      ISSN = {0022-0396},
   MRCLASS = {35J25 (35B25)},
  MRNUMBER = {2139339},
MRREVIEWER = {Xingbin Pan},
       DOI = {10.1016/j.jde.2005.02.011},
       URL = {https://doi.org/10.1016/j.jde.2005.02.011},
}
\end{document}